\documentclass[a4paper,11pt,reqno]{amsart}
\usepackage{lmodern}

\usepackage{amssymb}
\usepackage{siunitx}
\usepackage{epsfig}
\usepackage{amsfonts,amsrefs}
\usepackage{amsmath}
\usepackage{euscript}
\usepackage{amscd}
\usepackage{amsthm}
\usepackage{enumitem}
\DeclareMathAlphabet{\mathpzc}{OT1}{pzc}{m}{it}
\usepackage{enumitem}
\usepackage{color}
\usepackage{mathtools}
\usepackage[hypertexnames=false, colorlinks, citecolor=red, linkcolor=blue, urlcolor=red]{hyperref}
\usepackage[utf8]{inputenc}
\usepackage[T1]{fontenc} 
\usepackage{marginnote}
\usepackage{marvosym}

\usepackage[normalem]{ulem}

\newcommand{\marginextend}[1]{ \addtolength{\oddsidemargin}{-#1}  \addtolength{\evensidemargin}{-#1}
	\addtolength{\textwidth}{#1}\addtolength{\textwidth}{#1}}
\newcommand{\updownextend}[1]{ \addtolength{\topmargin}{-#1}  \addtolength{\textheight}{#1}
	\addtolength{\textheight}{#1}}
\marginextend{1.5cm}
\updownextend{0cm}
\allowdisplaybreaks[4]

\usepackage{pst-node}
\usepackage{tikz-cd}
\usepackage{mathrsfs}
\usepackage[most]{tcolorbox}

\DeclareFontFamily{OT1}{pzc}{}
\DeclareFontShape{OT1}{pzc}{m}{it}{<-> s * [1.10] pzcmi7t}{}
\DeclareMathAlphabet{\mathpzc}{OT1}{pzc}{m}{it}

\DeclareSymbolFont{SY}{U}{psy}{m}{n}
\DeclareMathSymbol{\emptyset}{\mathord}{SY}{'306}

\theoremstyle{plain}

\newtheorem{thm}{Theorem}[section]
\newtheorem*{thm*}{Theorem}
\newtheorem{cor}[thm]{Corollary}
\newtheorem{lem}[thm]{Lemma}
\newtheorem{prop}[thm]{Proposition}
\newtheorem{defn}[thm]{Definition}
\newtheorem{rem}[thm]{Remark}

\newtheoremstyle{mainthmstyle}%
  {}{}
  {\itshape}
  {}
  {\bfseries}
  {}
  {0pt}
  {\thmname{#1}. \thmnote{#3}} 

\theoremstyle{mainthmstyle}

\newtheoremstyle{named}{}{}{\itshape}{}{\bfseries}{.}{.5em}{#1 \thmnote{#3}}
\theoremstyle{named}

\tcolorboxenvironment{maintheorem}{
  colback=white,
  colframe=black,
  boxrule=0.8pt
}

\numberwithin{equation}{section}

\def\beq{\begin{eqnarray}}
	\def\eeq{\end{eqnarray}}
\def\beqa{\begin{eqnarray*}}
	\def\eeqa{\end{eqnarray*}}

\newcommand{\be}{\begin{equation}}
	\newcommand{\ee}{\end{equation}}
\newcommand{\bea}{\begin{eqnarray}}
	\newcommand{\eea}{\end{eqnarray}}
\newcommand{\Bea}{\begin{eqnarray*}}
	\newcommand{\Eea}{\end{eqnarray*}}

\newcounter{cnt1}
\newcounter{cnt2}
\newcounter{cnt3}
\newcommand{\blr}{\begin{list}{$($\roman{cnt1}$)$}
		{\usecounter{cnt1} \setlength{\topsep}{0pt}
			\setlength{\itemsep}{0pt}}}
	\newcommand{\bla}{\begin{list}{$($\alph{cnt2}$)$}
			{\usecounter{cnt2} \setlength{\topsep}{0pt}
				\setlength{\itemsep}{0pt}}}
		\newcommand{\bln}{\begin{list}{$($\arabic{cnt3}$)$}
				{\usecounter{cnt3} \setlength{\topsep}{0pt}
					\setlength{\itemsep}{0pt}}}
			\newcommand{\el}{\end{list}}
		
\title[Continuous family of compact quantum metric spaces]{Continuous family of compact quantum metric space structures from cocycle twisted crossed product $\textrm{C}^{\ast}$-algebras}

\author[A. Chattopadhyay]{Arnab Chattopadhyay}
\author[S. Joardar]{Soumalya Joardar}

\address[A. Chattopadhyay]{Indian Institute of Science Education And Research Kolkata, Mohanpur 741246, Nadia, West Bengal, India} \email{ac23rs002@iiserkol.ac.in}

\address[S. Joardar]{Indian Institute of Science Education And Research Kolkata, Mohanpur 741246, Nadia, West Bengal, India} \email{soumalya@iiserkol.ac.in}

\begin{document}

\begin{abstract}
    We establish the existence of a three-parameter family of compact quantum metric space structures on cocycle twisted crossed products by discrete groups. We are mainly interested in the case where the acting group has exponential/subexponential growth. We prove that the family is jointly continuous with respect to the parameters when the acting group is exact. We obtain quantitative upper and lower bounds for the associated metric dimensions. In particular, the bounds are helpful to prove the failure of lower semicontinuity of the metric dimension with respect to the quantum Gromov-Hausdorff distance. We also prove invariance of metric dimension under zero quantum Gromov-Hausdorff distance. 
\end{abstract}
\maketitle
\section{Introduction}
Compact quantum metric spaces (CQMS for short), introduced by M. Rieffel, provide a framework in which geometric ideas such as distance, dimension, and convergence can be extended from classical compact metric spaces to operator algebras.  Over the last two decades, this theory has become an important component of noncommutative geometry, particularly in the study of deformation phenomena, approximation by finite-dimensional structures, and quantum analogues of classical metric invariants. M. Rieffel initially proposed compact quantum metric spaces in the framework of order unit spaces (see \cites{Rieffel-Metric, Rieffel-Gromov, Rieffel-Sphere}). Later it was extended to the set up of $C^{\ast}$-algebras in a series of papers by F. Latr\'emoli\`ere, H. Li et al (\cites{Latre-Prop1, Latre-Prop2, Latre-Prop3}; \cites{Hanfeng-CQMS, Hanfeng-qGH}). Subsequently, it was extended for complete operator systems by J. Kaad and D. Kayed in \cite{Kaad-Kyed}.

A recurring theme in mathematical physics is that geometric structures often persist under deformation. Classical examples include deformation quantization, noncommutative tori, twisted group algebras, and more general cocycle deformations of crossed products. Such deformations frequently preserve substantial algebraic information while modifying the underlying geometry in a highly nontrivial way. An important question is therefore to understand how quantum metric structures behave under these deformations and to identify geometric quantities that remain stable, vary continuously, or exhibit phase transitions.

The present article is concerned with cocycle twisted crossed product $C^{\ast}$-algebras associated with actions of discrete groups. These algebras simultaneously encode the geometry of a coefficient algebra, the dynamics of a group action, and the deformation data carried by a unitary $2$-cocycle. They form a large class of examples encompassing twisted group $C^{\ast}$-algebras, noncommutative tori, and many other deformation models arising in operator algebras and mathematical physics. Despite their importance, a systematic quantum metric treatment of cocycle twisted crossed products remains largely undeveloped.

The first objective of this paper is to construct compact quantum metric space structures on such twisted crossed products. Starting from a compact quantum metric space $(A,L_A)$ and a discrete group equipped with a proper length function, we introduce a family of weighted Lip-norms on the twisted crossed product $A\rtimes_{r,\rho,\sigma}\Gamma$. The construction combines the metric information coming from the coefficient algebra with a weighted control of Fourier coefficients in the group direction. The weights depend on two parameters and reflect the interaction between the decay imposed by the Lip-norm and the growth of the underlying group. 

Our motivation for introducing these weighted structures are two-fold. The first motivating factor is the observation that the growth properties of groups should influence the geometry of the corresponding noncommutative spaces. In this context, note that compact quantum metric space structures on crossed product $C^{\ast}$-algebras have been studied by A. Hawkins in \cite{Hawkins-Crossed-CQMS} and by M. Klisse in \cite{Klisse-Crossed-CQMS}. In those articles, the main idea was to get CQMS structures from suitable spectral triples on the crossed products. But in those approaches, although the length function of the acting groups played a role, the large scale geometry of the group didn't play any prominent role. More recently, CQMS structures on crossed product by groups of polynomial growth has also been studied by A. Austad in \cite{Austad-Crossed-CQMS}. But CQMS structures on crossed product by exponential/subexponential growth group is largely missing from the literature. In classical geometry, growth controls the asymptotic size of metric balls and is closely related to dimensional phenomena. In the noncommutative setting, one expects analogous behavior to emerge through covering invariants associated with quantum metric spaces. The second motivation is that for a large class of transformation groupoids, one necessarily loses the rapid decay property of the length function so that the techniques in \cites{Anton-Chris-CQMS, Soumalya-Arnab2} do not work anymore. Note that the weighted Lip-norms fail to satisfy the Leibnitz-type property. Therefore, one has to abandon the quantum propinquity of Latr\'emoli\`ere. Instead, we stick to the notion of quantum Gromov-Hausdorff distance in complete operator systems in the sense of \cite{Kaad-Kyed}. In \cite{Soumalya-Arnab2}, it was observed that if the discrete group has the exponential growth property, any Lip-norm satisfying the Leibnitz property forces the metric dimension to be $+\infty$. Therefore, from the point of view of metric dimension, if one wants to handle $C^{\ast}$-dynamical system coming from groups with exponential/ subexponential growth, then  the loss of the Leibnitz property is quite natural.

The second objective of the paper is to study the metric dimension of the resulting compact quantum metric spaces. Metric dimension was introduced by D. Kerr as a noncommutative covering invariant and has subsequently appeared in several contexts related to entropy and dimension theory. Our results reveal a striking threshold phenomenon governed by the competition between the growth rate of the acting group and the decay rate built into the Lip-norm. When decay dominates growth, the group contribution becomes invisible from the perspective of the metric dimension. However, at a critical scaling, the geometry retains a nontrivial contribution from the group. Thus the family of quantum metric spaces constructed here exhibits behaviour analogous to a phase transition, with the growth exponent of the group playing the role of a critical parameter. These bounds also provide examples of crossed product $C^{\ast}$-algebras with finite metric dimension where the acting group has exponential/subexponential growth.

A further theme of this work is deformation stability. Given a continuous family of unitary $2$-cocycles, one obtains a corresponding family of twisted crossed products. From the perspective of noncommutative geometry, it is natural to ask whether the associated quantum metric structures vary continuously under such cocycle deformations. Questions of this type have played a central role in the study of noncommutative tori and related deformation families. We show that the weighted compact quantum metric space structures introduced in this article vary continuously both with respect to the cocycle parameter and with respect to the scaling parameters appearing in the definition of the Lip-norm jointly (Theorem \ref{mainthm}). Consequently, the resulting family forms a continuous quantum-geometric deformation of the underlying crossed product. We provide concrete examples (Section \ref{Examples}) of such phenomenon. As one of the main motivations of this paper is to deal with the case of exponential/subexponential growth groups, we provide examples which arise from the $C^{\ast}$-dynamical systems involving hyperbolic surface groups. Let $\Gamma=\pi_{1}(\Sigma_g))$ be the fundamental group of a closed orientable surface of genus $g\geq 2$. Since $H^{2}(\Gamma,\mathbb{R})\cong \mathbb{R}$, every real-valued $2$-cocycle gives rise to a one-parameter family of unitary cocycles $\{\sigma_{\theta}\}_{\theta\in\mathbb{R}}$, producing a continuous deformation of the associated crossed product algebras. Our continuity results apply in particular to two natural classes of isometric actions of $\Gamma$. The first is the action on its profinite completion $\widehat{\Gamma}$, equipped with the standard ultrametric arising from a descending chain of finite-index normal subgroups. The second is an isometric action on the compact Lie group $SU(2)$. In both cases, the cocycle parameter $\theta$ generates a continuous family of twisted crossed products, and our results show that the associated compact quantum metric spaces vary continuously in the quantum Gromov--Hausdorff sense. These examples provide a geometric realization of our general deformation theory and illustrate how cocycle deformations interact with actions on compact spaces of markedly different character: a totally disconnected profinite space in the first case and a connected compact Lie group in the second. 

In general, the interaction between the metric dimension of compact quantum metric spaces and the quantum Gromov-Hausdorff distance is not very well understood. It is known that the metric dimension fails to be lower semicontinuous with respect to the quantum Gromov-Hausdorff distance. This can be seen in the convergence of matrix algebras to the sphere (\cite{Rieffel-Sphere}). In this example, matrix algebras, being finite dimensional vector spaces, have zero metric dimensions. Yet they converge to the sphere which has positive metric dimension. But this might as well be attributed to the finite dimensionality of the algebras instead of a true metric geometric phenomenon. In this paper, we have examples of compact quantum metric spaces of constant metric dimension converging to a CQMS of metric dimension strictly greater than the constant. Moreover, all the underlying spaces are infinite dimensional vector spaces (Remark \ref{lowersemicontinuity}). This illustrates the failure of lower semicontinuity of the metric dimension in a more metric geometric set up. Another interesting result we establish that if two compact quantum metric spaces have quantum Gromov-Hausdorff distance zero, then their metric dimensions are equal (Theorem \ref{qGH vs Mdim}). This result is particularly helpful in preventing the collapse of quantum Gromov Hausdorff distance between compact quantum metric spaces as illustrated in Remark \ref{qGHcollapse}.

The methods developed here combine ideas from compact quantum metric spaces, growth theory of discrete groups, cocycle deformation theory, and finite-dimensional approximation techniques. Although our constructions apply to general twisted crossed products, they are particularly effective for groups whose growth is controlled by functions of the form $e^{Cn^\alpha}$, thereby encompassing polynomial, intermediate, and many exponential growth regimes within a unified framework.

\vspace{2mm}

{\bf Acknowledgement}: The first author acknowledges the financial support under the Senior Research Fellowship Scheme funded by UGC. The authors are grateful to Dr. Shubhabrata Das for several helpful discussions on geometric group theory. During the preparation of this manuscript, the authors used ChatGPT (OpenAI, GPT-5.6) as an interactive language model to assist with literature navigation. 



\section{Preliminaries}
\subsection{Operator Systems as compact quantum metric spaces}

We take most of these preliminaries on operator systems as compact quantum metric spaces from \cite{Kaad-Kyed}, although we shall make slight adjustments to some statements and definitions according to our need. Throughout this paper, an \emph{operator system} is a norm-closed self-adjoint subspace
\(X\) of a unital \(C^{*}\)-algebra \(A_{X}\) containing the unit \(1_{X}\).
Thus \(X\) is closed under the involution and contains the order unit inherited
from the ambient \(C^{*}\)-algebra, but need not be closed under multiplication. For generalities of operator systems, the reader is referred to \cite{arveson1990operator}.

A \emph{state} on \(X\) is a positive linear functional
\(\mu:X\to \mathbb{C}\) satisfying \(\mu(1_{X})=1\).
As in the \(C^{*}\)-algebra setting, every state has norm one, and hence the
state space \(S(X)\) is compact in the weak$^{\ast}$-topology.



Associated to an operator system \(X\) there is its selfadjoint part
\[
X_{\mathrm{sa}}=\{x\in X:x=x^{*}\},
\]
which forms a real order unit space with order and order unit inherited from
the ambient \(C^{*}\)-algebra \(A_{X}\). The positive cone is given by
\[
X_{\mathrm{sa}}^{+}
=
\{x\in X_{\mathrm{sa}}:x\geq 0 \text{ in } A_{X}\},
\]
and the distinguished order unit is \(1_{X}\).

The state spaces of \(X\) and \(X_{\mathrm{sa}}\) are naturally identified.
Indeed, every state on \(X\) restricts to a state on \(X_{\mathrm{sa}}\), and
every state on \(X_{\mathrm{sa}}\) extends uniquely to a state on \(X\) by
complex linearity. This correspondence allows one to freely pass between 
operator systems and order unit spaces, a viewpoint that is particularly useful
in the study of compact quantum metric spaces.
\begin{defn}
    Let $X$ be an operator system. A seminorm $L:X\rightarrow [0,+\infty]$ is said to be a Lipschitz seminorm if the following hold:\\
    \indent (i) $L$ is densely defined i.e. $\mathrm{Dom}(L):=\{x\in X: L(x)<+\infty\}$ is a norm-dense subspace of $X$.\\
    \indent (ii) Kernel of $L$ are the scalars i.e. $\{x\in X:L(x)=0\}=\mathbb{C}1_{X}$.\\
    \indent (iii) $L$ is invariant under the adjoint operation i.e. $L(x)=L(x^{\ast})$ for all $x\in X$.\\
\end{defn}
\begin{defn}
    Let $L:X\rightarrow[0,+\infty]$ be a Lipschitz seminorm on an operator system $X$. The Monge-Kantorovich distance $d_{L}:S(X)\times S(X)\rightarrow [0,+\infty]$ is defined by 
    \begin{displaymath}
        d_{L}(\mu,\nu):=\sup\limits_{L(x)\leq 1}\{\lvert\mu(x)-\nu(x)\rvert\}, \ \mu,\nu\in S(X).
    \end{displaymath}
\end{defn}
We remark that the Monge–Kantorovic metric  $d_{L}$ is not strictly speaking a metric since it can a priori take the value $+\infty$. However, this possibility is excluded when $(X,L)$ is a compact quantum metric space in the following sense:
\begin{defn}
\label{CQMSdefn}
    Let $L:X\rightarrow [0,+\infty]$ be a Lipschitz seminorm. We say that $(X,L)$ is a compact quantum metric space if the Monge-Kantorovich distance $d_{L}$ metrizes the weak*-topology of $S(X)$. In that case $L$ is said to be a Lip-norm.
\end{defn}
The following characterization of a Lip-norm on an operator system is very useful. For the proof in the case of pre $C^{\ast}$-algebras, the reader is referred to \cite{Ozawa-Rieffel}*{Proposition 1.3}. The same proof goes through verbatim if one replaces a pre $C^{\ast}$-algebra by an operator system. We state the theorem without proof.
\begin{thm}\label{totbouiff}
    Let $L$ be a Lipschitz seminorm on an operator system $X$ and let $\sigma\in S(X)$. Then $L$ is a Lip-norm if and only if the set $\{x\in X: L(x)\leq 1, \sigma(x)=0\}$ is norm totally bounded in X. 
\end{thm}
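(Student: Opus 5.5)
The plan is to reduce to the self-adjoint part and then treat the two implications separately, both times through the isometric embedding of $X_{\mathrm{sa}}$ into continuous affine functions on $S(X)$. Write $\mathcal{B}=\{x\in X: L(x)\le 1,\ \sigma(x)=0\}$.

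\textbf{Reduction to self-adjoint elements.} Since $L(x)=L(x^*)$ and $\sigma$ is a state, $x\in\mathcal{B}$ gives $\mathrm{Re}\,x=(x+x^*)/2$ and $\mathrm{Im}\,x=(x-x^*)/(2i)$ in $\mathcal{B}_{\mathrm{sa}}=\mathcal{B}\cap X_{\mathrm{sa}}$. Conversely $\mathcal{B}\subseteq \mathcal{B}_{\mathrm{sa}}+i\mathcal{B}_{\mathrm{sa}}$, so total boundedness of $\mathcal{B}$ and of $\mathcal{B}_{\mathrm{sa}}$ are equivalent. Also $d_L$ is determined by self-adjoint elements. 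If $L(x)\le1$ and $\mu(x)-\nu(x)=e^{i\theta}|\mu(x)-\nu(x)|$, replace $x$ by $\mathrm{Re}(e^{-i\theta}x)$: it still has $L\le 1$ and gives the same value. For $a\in X_{\mathrm{sa}}$ we also have $\|a\|=\sup_{\mu\in S(X)}|\mu(a)|$.

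\textbf{($\Leftarrow$).} Assume $\mathcal{B}_{\mathrm{sa}}$ is totally bounded.
\begin{itemize}
\item Every $a\in X_{\mathrm{sa}}$ with $L(a)\le 1$ equals $(a-\sigma(a)1)+\sigma(a)1$, and the constant term cancels in $\mu(a)-\nu(a)$. Hence $d_L(\mu,\nu)\le 2\sup_{b\in\mathcal{B}_{\mathrm{sa}}}\|b\|<\infty$.
\item Given $\varepsilon>0$, choose a finite $\varepsilon$-net $b_1,\dots,b_n$ of $\mathcal{B}_{\mathrm{sa}}$. Then $d_L(\mu,\nu)\le \max_j|\mu(b_j)-\nu(b_j)|+2\varepsilon$.
\item The previous bound shows $d_L$ is weak$^*$ continuous on $S(X)\times S(X)$. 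The $d_L$-topology is Hausdorff: if $d_L(\mu,\nu)=0$, then $\mu=\nu$ on $\mathrm{Dom}(L)$, which is dense, so $\mu=\nu$.
\item The identity map from the compact weak$^*$ topology to the $d_L$ topology is then a continuous bijection from a compact space onto a Hausdorff space, hence a homeomorphism.
\end{itemize}

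\textbf{($\Rightarrow$).} Assume $d_L$ metrizes the weak$^*$ topology, so $(S(X),d_L)$ is a compact metric space with finite diameter $D$.
\begin{itemize}
\item Each $b\in\mathcal{B}_{\mathrm{sa}}$ gives the function $\hat b(\mu)=\mu(b)$, which is $1$-Lipschitz for $d_L$ and vanishes at $\sigma$. Hence $\|\hat b\|_\infty\le D$.
\item By Arzel\`a--Ascoli, $\{\hat b: b\in\mathcal{B}_{\mathrm{sa}}\}$ is totally bounded in $C(S(X))$.
\item The map $b\mapsto\hat b$ is isometric on $X_{\mathrm{sa}}$, so $\mathcal{B}_{\mathrm{sa}}$ is totally bounded in norm.
\end{itemize}

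\textbf{Expected obstacle.} The main subtlety is the domain issue: $L$ takes values in $[0,+\infty]$, so it is only densely defined. I handle it in two places. The density of $\mathrm{Dom}(L)$ is what separates states in ($\Leftarrow$). In ($\Rightarrow$), the elements used are automatically in $\mathrm{Dom}(L)$, since $\mathcal{B}$ only contains elements with $L\le 1$, so no extra care is needed there. Apart from this, the argument is the standard one of Ozawa–Rieffel, and it never uses multiplication, so it goes through for operator systems.
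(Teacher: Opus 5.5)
Your proposal is correct and is essentially the argument the paper relies on. The paper gives no proof and defers to Ozawa--Rieffel, and your two directions (Arzel\`a--Ascoli applied to the $1$-Lipschitz functions $\hat b$, and a finite $\varepsilon$-net estimate followed by the compact-to-Hausdorff homeomorphism) are the standard Rieffel-type proof behind that reference, written directly for the $\sigma$-normalized ball. Your explicit reduction to self-adjoint elements and your use of $\|a\|=\sup_{\mu\in S(X)}\lvert\mu(a)\rvert$ for $a\in X_{\mathrm{sa}}$ are exactly what justify the paper's claim that the proof transfers verbatim to operator systems.
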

Given any Lipschitz seminorm $L$ on an operator system $X$ one can pass to the natural order unit space $X_{\mathrm{sa}}$ and induce a Lipschitz seminorm $L_{\mathrm{sa}}$ on $\mathrm{Dom}(L)_{\mathrm{sa}}=\mathrm{Dom}(L)\cap X_{\mathrm{sa}}$ in the sense of M. Rieffel. In fact, one has the following proposition:
\begin{prop}\label{operatorsystemtoorder}(\cite{Kaad-Kyed}*{Proposition 2.1.8})
    If $(X,L)$ is a compact quantum metric space then $(\mathrm{Dom}(L)_{\mathrm{sa}},L_{\mathrm{sa}})$ is an order unit compact quantum metric space in the sense of M.Rieffel.
\end{prop}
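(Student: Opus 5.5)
The plan is to verify directly the order unit space axioms from Rieffel's framework. In that framework an order unit compact quantum metric space is a pair $(V,L)$ with three ingredients: $V$ is a real order unit space, not necessarily complete; $L$ is a seminorm on $V$ whose kernel is $\mathbb{R}1$; and the metric $\rho_L$ on $S(V)$ induces the weak$^*$-topology.

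\textbf{Order unit space and seminorm.} First, $V=\mathrm{Dom}(L)_{\mathrm{sa}}$ is a real subspace of $X_{\mathrm{sa}}$. It contains $1_X$, because $L(1_X)=0<\infty$. It therefore inherits an order unit space structure from $X_{\mathrm{sa}}$: the order and the order unit restrict, and the order unit norm agrees with the $C^*$-norm on self-adjoint elements. On $V$ the restriction $L_{\mathrm{sa}}$ is finite-valued. Its kernel is $\{x\in V: L(x)=0\}=\mathbb{C}1_X\cap X_{\mathrm{sa}}=\mathbb{R}1_X$, so $L_{\mathrm{sa}}$ is a Lipschitz seminorm in Rieffel's sense.

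\textbf{Identifying the two metrics.} Next I compare the state spaces. Since $\mathrm{Dom}(L)$ is norm dense and self-adjoint in $X$ (by invariance under $*$), $V$ is norm dense in $X_{\mathrm{sa}}$: take the real part $(x+x^*)/2$ of approximants. Hence restriction gives an affine homeomorphism $S(X)\cong S(X_{\mathrm{sa}})\cong S(V)$ for the weak$^*$-topologies. For surjectivity, states on $V$ are bounded of norm one, so they extend by continuity to $X_{\mathrm{sa}}$, and positivity persists. The homeomorphism claim follows because the weak$^*$-topology on bounded sets is determined by a dense subspace.

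It then remains to show $d_L(\mu,\nu)=\rho_{L_{\mathrm{sa}}}(\mu|_V,\nu|_V)$. The inequality $\geq$ is immediate, since self-adjoint elements with $L\le1$ form a subset of those in the defining supremum for $d_L$. For $\leq$, take $x\in\mathrm{Dom}(L)$ with $L(x)\le 1$ and choose a phase $\lambda$ with $|\mu(x)-\nu(x)|=\lambda(\mu(x)-\nu(x))$. Set $y=\mathrm{Re}(\lambda x)=(\lambda x+\bar\lambda x^*)/2$. Then $L(y)\le 1$ by the triangle inequality and $*$-invariance. Because states are hermitian, $\mu(y)-\nu(y)=\mathrm{Re}(\lambda(\mu(x)-\nu(x)))=|\mu(x)-\nu(x)|$.

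\textbf{Conclusion and the delicate step.} So the two metrics coincide under the homeomorphism of state spaces. Since $d_L$ metrizes the weak$^*$-topology by Definition \ref{CQMSdefn}, so does $\rho_{L_{\mathrm{sa}}}$. Alternatively, one can pass through Theorem \ref{totbouiff} and Rieffel's total boundedness criterion. The only delicate step is the identification of state spaces for the non-closed domain; this is handled by the density argument above.
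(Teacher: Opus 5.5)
Your proof is correct. The paper gives no argument for this statement; it quotes it from \cite{Kaad-Kyed}. So you have supplied a self-contained proof rather than reconstructed one from the text.

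The two steps you only assert are routine.
\begin{itemize}
\item \emph{Positivity of the extension.} Let $\phi\in S(V)$ and let $\bar\phi$ be its norm-one continuous extension to $X_{\mathrm{sa}}$. For $x\geq 0$ in $X_{\mathrm{sa}}$ and $v\in V$ with $\|x-v\|<\epsilon$, we have $v+\epsilon 1_X\in V$ and $v+\epsilon 1_X\geq 0$. Hence $\phi(v)\geq-\epsilon$, and so $\bar\phi(x)>-2\epsilon$.
\item \emph{Homeomorphism of state spaces.} Restriction $S(X)\to S(V)$ is a weak$^*$-continuous bijection from a compact space onto a Hausdorff space, hence a homeomorphism.
\end{itemize}

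At the end you mention an alternative route: the self-adjoint part of the totally bounded set $\{x: L(x)\leq 1,\ \mu(x)=0\}$ from Theorem \ref{totbouiff} is totally bounded, and then one applies the corresponding criterion for order unit spaces. That route is shorter, but it rests on total-boundedness characterizations on both sides and yields only that $L_{\mathrm{sa}}$ is a Lip-norm.

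Your direct argument uses only the definitions, the density of $V$, and the rotated-real-part trick $y=\tfrac12(\lambda x+\bar\lambda x^*)$. It also proves more: restriction is an isometry from $(S(X),d_L)$ onto $(S(V),\rho_{L_{\mathrm{sa}}})$. That is the kind of identification behind the passage between $\mathrm{dist}_{\mathrm{qGH}}$ and Rieffel's $\mathrm{dist}_Q$ in Theorem \ref{Rieffelequality}.
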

Now we recall the definition of metric dimension of a compact quantum metric space $(X,L)$ where $X$ is a complete operator system in our sense. We note that the original definition of metric dimension is due to David Kerr (\cite{Kerr}) in the set up of unital $C^{\ast}$ algebras. But it is easy to see that the definition has a straightforward generalization to operator spaces. We recall a few notations from \cite{Kerr}. For a normed linear space $(X,\| \cdot \|)$ (either an operator system or a Hilbert space for us), $\mathcal{F}(X)$ will denote the collection of all finite dimensional subspaces of $X$. If $Y,Z$ are subsets of $X$, then for $\delta>0$, the notation $Y\subseteq_{\delta} Z$ will mean that for every $y\in Y$, there is some $z\in Z$ such that $\|y-z\| <\delta$. For any subset $Y\subset X$, $D(Y,\delta)=\inf\{{\textrm{dim}}(Z):Z\in\mathcal{F}(X), Y\subseteq_{\delta}Z\}$, where $\textrm{dim}(Z)$ is the vector space dimension of $Z$. Then it is easy to see that if $Y_{1}\subseteq Y_{2}$, $D(Y_{1},\delta)\leq D(Y_{2},\delta)$ for any $\delta>0$. Now let $(X,L)$ be a compact quantum metric space on an operator system $X$ in the sense of Definition \ref{CQMSdefn}. Then we denote the set $\{x\in X:L(x)\leq 1\}$ by $\mathcal{L}_1$. Then we have the following:
\begin{defn}
    The metric dimension of a CQMS $(X,L)$ is defined to be
    \begin{displaymath}
    \mathrm{Mdim}_{L}(X):=\limsup\limits_{\delta\to 0^{+}} \frac{\log D(\mathcal{L}_{1},\delta)}{\log  \delta^{-1}}
    \end{displaymath}
\end{defn}
Now we recall the bi-Lipschitz equivalence which is the relevant equivalence in the context of CQMS theory.
\begin{defn}
    \label{bi-lipschitz_equivalence}(\cite{Kerr}*{Definition 2.8}) Let $X,Y$ be two operator systems with Lip-norms $L_{X}$, $L_{Y}$ respectively. A positive unital linear map $\phi:X\rightarrow Y$ is said to be Lipschitz if there is a $C\geq 0$, such that $L_{Y}(\phi(x))\leq C L_{X}(x)$ for all $x\in \mathrm{Dom}(L_{X})$. If $\phi$ is invertible and both $\phi,\phi^{-1}$ are Lipschitz then we say $\phi$ is bi-Lipschitz and in that case we say $(X,L_X)$ and $(Y,L_Y)$ are bi-Lipschitz equivalent.
\end{defn}
The metric dimension is an invariant for bi-Lipschitz equivalence. We recall the following theorem whose proof would be a slight modification of the existing proof because of our different choice of $\mathcal{L}_1$ and our choice of operator systems. It follows from the observation that positive unital maps are norm contractive for operator systems.
\begin{thm}
    \label{Metric_inv}(\cite{Kerr}*{Proposition 3.4}) Let $(X,L_X)$ and $(Y,L_{Y})$ be two bi-Lipschitz equivalent CQMS. Then 
    \begin{displaymath}
        \mathrm{Mdim}_{L_X}(X)=\mathrm{Mdim}_{L_{Y}}(Y).
    \end{displaymath}
\end{thm}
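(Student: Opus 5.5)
The plan is to show that a bi-Lipschitz map transports $\delta$-approximating subspaces between the two unit Lipschitz balls with only a constant rescaling of $\delta$, which does not affect the limsup. Let $\phi:X\to Y$ be the bi-Lipschitz map of Definition \ref{bi-lipschitz_equivalence}, with constants $C_1,C_2>0$ such that $L_Y(\phi(x))\le C_1L_X(x)$ and $L_X(\phi^{-1}(y))\le C_2L_Y(y)$. We may take $C_1,C_2>0$, since enlarging a constant is harmless. By symmetry it suffices to prove $\mathrm{Mdim}_{L_Y}(Y)\le \mathrm{Mdim}_{L_X}(X)$. The first ingredient is that positive unital maps between operator systems are norm contractive on self-adjoint elements, and bounded with norm at most $2$ in general. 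This follows by decomposing into real and imaginary parts, or from the standard fact that $\|\phi\|\le 2$ for positive unital maps; any fixed bound $K$ on $\|\phi\|$ suffices.

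Fix $\delta>0$ and take $y\in\mathcal{L}_1(Y)$. Then $x:=\phi^{-1}(y)$ satisfies $L_X(x)\le C_2$, so $x/C_2\in\mathcal{L}_1(X)$. Let $Z\in\mathcal{F}(X)$ satisfy $\mathcal{L}_1(X)\subseteq_{\delta} Z$ with $\dim Z=D(\mathcal{L}_1(X),\delta)$. Choose $z\in Z$ with $\|x/C_2-z\|<\delta$. Then
\[
\|y-\phi(C_2z)\|=C_2\|\phi(x/C_2-z)\|<KC_2\delta .
\]
Hence $\mathcal{L}_1(Y)\subseteq_{KC_2\delta}\phi(Z)$, and $\dim\phi(Z)\le\dim Z$. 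This gives
\[
D(\mathcal{L}_1(Y),KC_2\delta)\le D(\mathcal{L}_1(X),\delta).
\]
Here the relevant map is $\phi$ itself, whose norm bound is used.

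Write $c=KC_2$ and $\eta=c\delta$. Then
\[
\frac{\log D(\mathcal{L}_1(Y),\eta)}{\log\eta^{-1}}\le \frac{\log D(\mathcal{L}_1(X),\eta/c)}{\log(\eta/c)^{-1}}\cdot\frac{\log(\eta/c)^{-1}}{\log\eta^{-1}}.
\]
As $\eta\to0^+$ the last factor tends to $1$. Taking the limsup therefore gives $\mathrm{Mdim}_{L_Y}(Y)\le\mathrm{Mdim}_{L_X}(X)$. If the values involved are infinite, the same inequality holds trivially. If $D=0$ or $1$ only at large $\delta$, this is irrelevant as $\delta\to0$.

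The only delicate point is the norm bound on $\phi$, since $\mathcal{L}_1$ here is not restricted to self-adjoint elements. The plan is to cite the standard fact that a positive map $\phi$ between operator systems satisfies $\|\phi\|\le2\|\phi(1)\|=2$, because any constant suffices. Alternatively, one can reduce to self-adjoint parts via $y=\mathrm{Re}\,y+i\,\mathrm{Im}\,y$, using $L(x^*)=L(x)$ to keep both parts in $\mathcal{L}_1$, and then apply contractivity of positive unital maps on self-adjoint elements. Applying the same argument to $\phi^{-1}$ gives the reverse inequality.
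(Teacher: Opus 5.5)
Your proposal is correct and follows essentially the route the paper indicates, which is Kerr's argument: pull the unit Lip-ball of $Y$ back into a scaled Lip-ball of $X$ via $\phi^{-1}$, push an optimal approximating subspace forward through $\phi$, and absorb the fixed rescaling of $\delta$ in the $\limsup$. Your use of the bound $\|\phi\|\le 2$ is more careful than the paper's remark that positive unital maps on operator systems are norm contractive, since contractivity is only guaranteed on self-adjoint elements; because any fixed constant suffices, the conclusion is unaffected.
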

\subsection{Quantum Gromov-Hausdorff distance} We briefly recall the notion of quantum Gromov-Hausdorff distance between compact quantum metric spaces as discussed in the section 2.2 of \cite{Kaad-Kyed}. We would like to mention that this notion of quantum Gromov-Hausdorff distance is different from Latremoliere's quantum propinquity. Rather it is an adaptation of the quantum Gromov-Hausdorff distance defined originally by M. Rieffel (\cite{Rieffel-Gromov}) in the context pf order unit spaces. 
\begin{defn}
    Given two compact quantum metric spaces $(X,L)$ and $(Y,K)$, a Lipschitz seminorm $M:X\oplus Y\rightarrow[0,+\infty]$ is said to be admissible if $M$ is a Lip-norm on $X\oplus Y$, $\mathrm{Dom}(M)=\mathrm{Dom}(L)\oplus\mathrm{Dom}(K)$ and the quotient seminorms induced by $M_{\mathrm{sa}}$ via the coordinate projections 
    \begin{displaymath}
        \mathrm{Dom}(M)_{\mathrm{sa}}\rightarrow \mathrm{Dom}(L)_{\mathrm{sa}}, \quad \mathrm{Dom}(M)_{sa}\rightarrow \mathrm{Dom}(K)_{\mathrm{sa}}
    \end{displaymath}
    agree with $L_{\mathrm{sa}}$ and $K_{\mathrm{sa}}$ respectively.
\end{defn}
Whenever $L$ is an admissible Lipschitz seminorm it follows that the coordinate projections $X\oplus Y\rightarrow X$ and $X\oplus Y\rightarrow Y$ induce isometries from $S(X)\rightarrow S(X\oplus Y)$ and $S(Y)\rightarrow S(X\oplus Y)$ where the state spaces are equipped with the Monge-Kantorovich distance coming from the relevant Lip-norms. In particular, one can measure the Hausdorff distance between $S(X)$ and $S(Y)$ with respect to the Monge-Kantorovich distance $d_{L}$ in $S(X\oplus Y)$. Denoting this distance by $\mathrm{dist}^{d_{L}}_{H}(S(X),S(Y))$, one defines the quantum Gromov-Hausdorff distance between the compact quantum metric spaces $(X,L_X)$ and $(Y,L_Y)$ by
\begin{displaymath}
    \mathrm{dist}_{\mathrm {qGH}}((X,L),(Y,K)):=\mathrm{inf}\{\mathrm{dist}^{d_{L}}_{H}(S(X),S(Y)):M:X\oplus Y\rightarrow[0,\infty] \ \mathrm{admissible}\}.
\end{displaymath}
Using the adjoint invariance of Lip-norms, one can obtain the next theorem which essentially says that the quantum Gromov-Hausdorff distance between two compact quantum metric spaces can be recovered from their self-adjoint parts. For the proof, we refer the reader to \cite{Kaad-Kyed}*{Lemma 2.2.2}.

\begin{thm} \label{Rieffelequality}
   $\mathrm{dist}_{\mathrm {qGH}}((X,L),(Y,L))=\mathrm{dist}_{Q}((\mathrm{Dom}(L)_{\mathrm {sa}},L_{\mathrm{sa}}),(\mathrm{Dom}(K)_{\mathrm {sa}},K_{\mathrm {sa}}))$, where $\mathrm{dist}_{Q}$ denotes the distance between order unit compact quantum metric spaces in the sense of M. Rieffel. 
\end{thm}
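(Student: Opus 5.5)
The plan is to show that the infimum defining $\mathrm{dist}_{\mathrm{qGH}}$ and the infimum defining $\mathrm{dist}_Q$ run, in effect, over the same family of seminorms and produce the same Hausdorff distances. The first step is to set up a dictionary between admissible Lip-norms $M$ on $X\oplus Y$ and admissible Lip-norms $N$ on the order unit space $\mathrm{Dom}(L)_{\mathrm{sa}}\oplus\mathrm{Dom}(K)_{\mathrm{sa}}$ in Rieffel's sense. One direction is restriction, $M\mapsto M_{\mathrm{sa}}$. By Proposition \ref{operatorsystemtoorder}, applied to $(X\oplus Y,M)$, the restriction $M_{\mathrm{sa}}$ is an order unit Lip-norm. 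Admissibility of $M$ says exactly that its quotients on the two self-adjoint summands are $L_{\mathrm{sa}}$ and $K_{\mathrm{sa}}$, so $M_{\mathrm{sa}}$ is Rieffel-admissible. Since states on $X\oplus Y$ and on its self-adjoint part are identified, and the supremum in $d_M$ can be taken over self-adjoint elements, $d_M$ coincides with $d_{M_{\mathrm{sa}}}$. Together these give $\mathrm{dist}_Q\le \mathrm{dist}_{\mathrm{qGH}}$.

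For the converse, I start from a Rieffel-admissible $N$ on $\mathrm{Dom}(L)_{\mathrm{sa}}\oplus \mathrm{Dom}(K)_{\mathrm{sa}}$ and extend it to a seminorm on the whole of $X\oplus Y$. The concrete choice I would try first is
\[
M(z):=\sup_{\theta\in\mathbb{R}} N\bigl(\mathrm{Re}(e^{i\theta}z)\bigr)
\]
for $z\in\mathrm{Dom}(L)\oplus\mathrm{Dom}(K)$, with $M(z)=+\infty$ otherwise. This $M$ is a seminorm, is adjoint invariant, and restricts to $N$ on self-adjoint elements. Its kernel is $\mathbb{C}1$, and its domain is $\mathrm{Dom}(L)\oplus\mathrm{Dom}(K)$. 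To see that it is a Lip-norm I would use Theorem \ref{totbouiff}: on $\{M\le1,\ \mu(z)=0\}$ the real and imaginary parts lie in $\{N\le1\}$ with vanishing state, and this set is totally bounded because $N$ is an order unit Lip-norm.

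The expected main obstacle is admissibility of $M$. Since $M_{\mathrm{sa}}=N$, it amounts to checking that the quotient seminorms of $M_{\mathrm{sa}}$ are still $L_{\mathrm{sa}}$ and $K_{\mathrm{sa}}$, which holds exactly because $N$ is admissible. The subtle point is that admissibility in the operator system sense is phrased only through the self-adjoint parts, so no condition on non-self-adjoint quotients is needed. If the paper's definition required more, I would instead replace $M$ by the maximum of this extension and the seminorm $L\oplus K$ restricted appropriately. Either way, $d_M=d_N$ on states, so the Hausdorff distances agree, giving $\mathrm{dist}_{\mathrm{qGH}}\le\mathrm{dist}_Q$ and hence equality.
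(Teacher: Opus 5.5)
Your proposal is correct. The paper gives no argument of its own here. It only remarks that the result follows ``using the adjoint invariance of Lip-norms'' and refers to Lemma 2.2.2 of Kaad--Kyed, so what you have written is a self-contained proof in place of a citation. It uses adjoint invariance exactly where that remark points: restriction $M\mapsto M_{\mathrm{sa}}$ gives $\mathrm{dist}_Q\le\mathrm{dist}_{\mathrm{qGH}}$, and the phase-rotated extension $M(z)=\sup_\theta N(\mathrm{Re}(e^{i\theta}z))$ gives the reverse inequality.

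The step that deserves to be written out is the equality of Monge--Kantorovich metrics, since it is where the Hausdorff distances are actually compared. For states $\mu,\nu$ and $M(z)\le 1$, choose $\theta$ with $e^{i\theta}(\mu-\nu)(z)\ge 0$. Since $\mu-\nu$ is hermitian, $|(\mu-\nu)(z)|=(\mu-\nu)(\mathrm{Re}(e^{i\theta}z))$. Then $M(\mathrm{Re}(e^{i\theta}z))\le M(z)$ holds by adjoint invariance in the first direction, and $N(\mathrm{Re}(e^{i\theta}z))\le M(z)$ holds by construction in the second.

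This is where your supremum over phases pays off. The more obvious extension $\max\{N(\mathrm{Re}\,z),N(\mathrm{Im}\,z)\}$ is also an admissible Lip-norm. However, its metric is $\sqrt{2}\,d_N$, because real and imaginary parts can be chosen independently, so it would only give $\mathrm{dist}_{\mathrm{qGH}}\le\sqrt{2}\,\mathrm{dist}_Q$.

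Your fallback for admissibility is unnecessary: the paper's definition constrains only the quotients of $M_{\mathrm{sa}}$, and $M_{\mathrm{sa}}=N$.

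Two routine facts are used implicitly and are worth stating:
\begin{enumerate}
\item Identifying $S(X\oplus Y)$ with the state space of the incomplete order unit space $\mathrm{Dom}(L)_{\mathrm{sa}}\oplus\mathrm{Dom}(K)_{\mathrm{sa}}$ uses that $\mathrm{Dom}(L)_{\mathrm{sa}}$ is dense in $X_{\mathrm{sa}}$. This follows by taking real parts of approximants from $\mathrm{Dom}(L)$. It also uses that the order unit norm there is the restricted $C^{\ast}$-norm.
\item The total boundedness step uses the order unit version of the criterion in Theorem \ref{totbouiff}, due to Ozawa--Rieffel, applied to $N$.
\end{enumerate}
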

Now we shall prove that if the quantum Gromov-Hausdorff distance between two compact quantum metric spaces are zero, then they have equal metric dimension. This will be particularly useful to prevent collapse of quantum Gromov Hausdorff distance. We start with the definition of the closure of a CQMS which is a straightforward generalization of the same concept introduced by M. Rieffel (\cite{Rieffel-Metric}) in the context of order unit spaces. Given a CQMS $(X,L)$, recall that the unit Lip-ball $\{x\in X, L(x)\leq 1\}$ is denoted by $\mathcal{L}_{1}$. We denote the closure of $\mathcal{L}_1$ in $X$ as usual by $\overline{\mathcal{L}}_1$. Then we define a seminorm $\overline{L}$ as the Minkowski functional of $\overline{\mathcal{L}}_1$ i.e. for $x\in X$,
\begin{displaymath}
    \overline{L}(x):=\inf\{r>0\ :\ x\in r\overline{\mathcal{L}}_1\}.
\end{displaymath}
Then it is straightforward to verify that $\overline{\mathcal{L}}_1=\{x\in X:\overline{L}(x)\leq 1\}$. Moreover, for any states $\mu,\nu\in S(X)$, we have
\begin{displaymath}
    d_{\overline{L}}(\mu,\nu)=\sup\{\lvert\mu(x)-\nu(x)\rvert:x\in\overline{\mathcal{L}}_1\}=\sup\{\lvert\mu(x)-\nu(x)\rvert:x\in\mathcal{L}_1\}=d_{L}(\mu,\nu),
\end{displaymath}
i.e. the Monge-Kantorovich distance on $S(X)$ induced by $L$ and $\overline{L}$ are same.

\begin{rem}
If $L$ is lower semicontinuous on $\mathrm {Dom}\ L,$ then $\overline {L} = L$ on $\mathrm {Dom}\ L.$ In other words, $\overline L$ is an extension of $L.$
\end{rem}

\begin{lem}
\label{Lip-Ext-Adj}
 The seminorm $\overline {L}$ is adjoint-invariant i.e. $\overline{L}(x)=\overline{L}(x^{\ast})$ for all $x\in X$. 
\end{lem}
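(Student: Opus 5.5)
The plan is to show that the closed set $\overline{\mathcal{L}}_1$ is invariant under the involution; invariance of the Minkowski functional $\overline{L}$ then follows formally. The first step uses the adjoint invariance of $L$ from the definition of a Lipschitz seminorm: $L(x^{\ast})=L(x)$ for every $x\in X$. Hence $x\in\mathcal{L}_1$ if and only if $x^{\ast}\in\mathcal{L}_1$, that is, $\mathcal{L}_1^{\ast}=\mathcal{L}_1$.

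The second step passes to the closure. The involution on the ambient $C^{\ast}$-algebra $A_X$ is isometric, so $\|x^{\ast}-y^{\ast}\|=\|x-y\|$, and it maps $X$ into itself because $X$ is self-adjoint. It is therefore a homeomorphism of $X$ onto itself and commutes with taking closures. Concretely, let $x\in\overline{\mathcal{L}}_1$ and choose $x_n\in\mathcal{L}_1$ with $x_n\to x$ in norm. Then $x_n^{\ast}\in\mathcal{L}_1$ and $x_n^{\ast}\to x^{\ast}$, so $x^{\ast}\in\overline{\mathcal{L}}_1$. This gives $\overline{\mathcal{L}}_1^{\ast}\subseteq\overline{\mathcal{L}}_1$, and applying the same argument to $x^{\ast}$ gives equality.

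The last step deals with scalars, which is the only point needing care. For real $r>0$ we have $(r y)^{\ast}=r y^{\ast}$. So if $x=r y$ with $y\in\overline{\mathcal{L}}_1$, then $x^{\ast}=r y^{\ast}$ with $y^{\ast}\in\overline{\mathcal{L}}_1$. The two sets $\{r>0: x\in r\overline{\mathcal{L}}_1\}$ and $\{r>0: x^{\ast}\in r\overline{\mathcal{L}}_1\}$ therefore coincide, and so do their infima, including the case where both sets are empty and both infima equal $+\infty$. This yields $\overline{L}(x)=\overline{L}(x^{\ast})$ for all $x\in X$. No step is a genuine obstacle.
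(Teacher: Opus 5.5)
Your proof is correct and follows essentially the same argument as the paper: both rest on $\mathcal{L}_1^{\ast}=\mathcal{L}_1$, on continuity of the involution to pass from $r x_n\to x$ to $r x_n^{\ast}\to x^{\ast}$, and on the radii $r$ being real. The only difference is packaging. You show that the sets of admissible radii for $x$ and $x^{\ast}$ coincide, which handles the finite and infinite cases at once, whereas the paper treats the case $\overline{L}(x)=+\infty$ separately and uses an $\varepsilon$-approximation of the infimum when $\overline{L}(x)<\infty$.
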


\begin{proof}
Let $x \in X$ be such that $\overline {L} (x) = +\infty.$ If there exists some $r_0 \in \mathbb R^{+}$ such that $x^{\ast} \in r_0 \overline {\mathcal {L}_1},$ then there exists a sequence $x_n \in \mathcal L_1$ such that $r_0 x_n \rightarrow x^{\ast}.$ But then $r_0 x_n^{\ast} \rightarrow x.$ Since $L$ is adjoint-invariant, $x_n^{\ast} \in \mathcal {L}_1$ and consequently, $x \in r_0 \overline {\mathcal {L}_1}.$ But then $\overline {L} (x) \leq r_0 < \infty,$ a contradiction. This shows that there does not exist any $r \in \mathbb R^{+}$ such that $x^{\ast} \in r \overline {\mathcal {L}_1}$ and hence $\overline {L} \left (x^{\ast} \right ) = \infty.$

Now let us assume that $\overline {L} (x) < \infty.$ Let $\varepsilon > 0$ be taken arbitrarily. Then there exists $r_1 \in \mathbb R^{+}$ such that $b \in r_1 \overline {\mathcal {L}_1}$ and $r_1 < \overline {L} (x) + \varepsilon.$ So we can get hold of $x_n^{\prime} \in \mathcal L_1$ such that $r_1 x_n^{\prime} \rightarrow x.$ Therefore, $r_1 {x_n^{\prime}}^{\ast} \rightarrow x^{\ast}.$ Since $L$ is adjoint-invariant, ${x_n^{\prime}}^{\ast} \in \mathcal {L}_1$ and consequently, $x^{\ast} \in r_1 \overline {\mathcal {L}_1}.$ This shows that $\overline {L} \left (x^{\ast} \right ) \leq r_1 < \overline {L} (x) + \varepsilon.$ Since $\varepsilon > 0$ was arbitrary, letting $\varepsilon \rightarrow 0^{+},$ it follows that $\overline {L} \left (x^{\ast} \right ) \leq \overline {L} (x).$ Replacing $x$ by $x^{\ast},$ the desired adjoint invariance follows.
\end{proof}



\begin{prop}
    If $(X,L)$ is a compact quantum metric space then, $(X,\overline{L})$ is also a compact quantum metric space.
\end{prop}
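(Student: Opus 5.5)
The plan is to verify directly that $\overline{L}$ is a Lipschitz seminorm and then to apply the total boundedness criterion of Theorem \ref{totbouiff}.

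\emph{Seminorm and density.} $\overline{\mathcal{L}}_1$ is the closure of a convex, balanced set, so it is itself convex and balanced, and its Minkowski functional $\overline{L}$ is therefore a seminorm with values in $[0,+\infty]$. Since $\mathcal{L}_1\subseteq\overline{\mathcal{L}}_1$, we have $\overline{L}\leq L$. Hence $\mathrm{Dom}(L)\subseteq\mathrm{Dom}(\overline{L})$, and density of $\mathrm{Dom}(\overline{L})$ follows. Adjoint invariance is exactly Lemma \ref{Lip-Ext-Adj}.

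\emph{Kernel.} The scalars lie in $\ker\overline{L}$ because $\overline{L}\leq L$. For the converse, I would use that $L$ is a Lip-norm, so the Monge--Kantorovich distance $d_L$ is finite and metrizes the weak$^\ast$-topology.
\begin{itemize}
\item Suppose $\overline{L}(x)=0$. Then $x\in r\overline{\mathcal{L}}_1$ for every $r>0$.
\item For any states $\mu,\nu$, the identity $d_{\overline{L}}=d_L$ noted before the statement, together with the scaling of the seminorm, gives $|\mu(x)-\nu(x)|\leq r\,d_L(\mu,\nu)$.
\item Letting $r\to0$ shows $\mu(x)=\nu(x)$ for all states $\mu,\nu$.
\end{itemize}
Write $x=a+ib$ with $a,b$ self-adjoint. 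The above gives $\mu(a)=\nu(a)$ and $\mu(b)=\nu(b)$ for all states, since states take real values on self-adjoint elements. In the ambient $C^\ast$-algebra, states separate points, and every state of $A_X$ restricts to a state of $X$. So a self-adjoint element on which all states agree is a scalar multiple of $1$, and hence $x\in\mathbb{C}1_X$.

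\emph{Total boundedness.} Fix $\sigma\in S(X)$ and set $B=\{x:\overline{L}(x)\leq1,\ \sigma(x)=0\}$. I claim $B\subseteq\overline{\{y:L(y)\leq1,\ \sigma(y)=0\}}$.
\begin{itemize}
\item Take $x\in B$. Since $\overline{\mathcal{L}}_1=\{\overline{L}\leq1\}$, there are $y_n\in\mathcal{L}_1$ with $y_n\to x$ in norm.
\item Replace $y_n$ by $y_n-\sigma(y_n)1_X$. This does not change $L(y_n)$, because $L$ vanishes on scalars and the triangle inequality applies.
\item Since $\sigma(y_n)\to\sigma(x)=0$, the modified elements still converge to $x$.
\end{itemize}
By Theorem \ref{totbouiff} applied to $L$, the set $\{L\leq1,\ \sigma=0\}$ is totally bounded. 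Its closure is then totally bounded, and so is the subset $B$. A second application of Theorem \ref{totbouiff}, now to $\overline{L}$, finishes the proof.

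The main obstacle is the kernel step, specifically the passage from ``all states agree on $x$'' to ``$x$ is scalar.'' This needs the observation that states of $A_X$ restrict to states of $X$, so that separation of points by states in the $C^\ast$-algebra can be used. Everything else is formal.
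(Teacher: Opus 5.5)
Your proof is correct. The first half matches the paper's argument: the seminorm property, $\overline{L}\le L$ giving density, adjoint invariance from Lemma \ref{Lip-Ext-Adj}, and the kernel via $d_{\overline{L}}=d_L$. You also spell out the state-separation step that the paper's kernel argument (``otherwise $d_{\overline{L}}$ takes the value $+\infty$'') uses only tacitly.

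You diverge from the paper in the final step. The paper observes that $d_{\overline{L}}=d_L$ and $d_L$ metrizes the weak$^\ast$-topology, so $d_{\overline{L}}$ does too, which is exactly Definition \ref{CQMSdefn}. You instead invoke Theorem \ref{totbouiff} twice. In between, you prove $\{\overline{L}\le 1,\ \sigma=0\}\subseteq\overline{\{L\le 1,\ \sigma=0\}}$ by recentring the approximants.

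Your route is valid but longer, and it depends on the cited total-boundedness criterion, which the paper states without proof. The paper's route is immediate from the definition. You already had $d_{\overline{L}}=d_L$ from your kernel step, so the detour could be avoided. What your route does give is an explicit comparison of the two centred Lip-balls, which is essentially the mechanism the paper uses afterwards in Lemma \ref{Mdim-Inv}.
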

\begin{proof}
    It is easy to see that $\mathrm {Dom}\ L \subseteq \mathrm {Dom}\ \overline {L}.$ Since $L$ is a Lip-norm on $X,$ $\mathrm {Dom}\ L$ is dense in $X$ and hence so is $\mathrm {Dom}\ \overline {L}.$ The adjoint-invariance of $\overline {L}$ follows from Lemma \ref{Lip-Ext-Adj}. Also since $d_{\overline {L}} = d_{L}$,  $\overline {L}$ has to have one dimensional kernel $\mathbb C 1_{X},$ for otherwise $d_{\overline {L}}$ takes the value $+\infty,$ which is a contradiction to the fact that $d_{L}$ always takes finite values. Finally,  $d_{\overline {L}}$ metrizes the weak$^{\ast}$-topology of $S (X),$ since so does $d_{L}.$ This completes the proof. 
\end{proof}
\begin{lem} \label{Mdim-Inv}
Let $(X,L)$ be a compact quantum metric space. Then $\mathrm {Mdim}_{L} (X) = \mathrm {Mdim}_{\overline {L}} (X).$
\end{lem}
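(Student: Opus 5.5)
The plan is to compare the covering numbers $D(\mathcal{L}_1,\delta)$ and $D(\overline{\mathcal{L}}_1,\delta)$ directly, since the unit ball of $\overline{L}$ is exactly $\overline{\mathcal{L}}_1$ (as noted above). Because $\mathcal{L}_1\subseteq\overline{\mathcal{L}}_1$, monotonicity of $D(\cdot,\delta)$ immediately gives
\[
D(\mathcal{L}_1,\delta)\leq D(\overline{\mathcal{L}}_1,\delta)\quad\text{for all }\delta>0 ,
\]
and hence $\mathrm{Mdim}_{L}(X)\leq \mathrm{Mdim}_{\overline{L}}(X)$.

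For the reverse inequality I will prove
\[
D(\overline{\mathcal{L}}_1,2\delta)\leq D(\mathcal{L}_1,\delta)\quad\text{for all }\delta>0 .
\]
Suppose $Z\in\mathcal{F}(X)$ satisfies $\mathcal{L}_1\subseteq_{\delta}Z$, and let $y\in\overline{\mathcal{L}}_1$. Choose $x\in\mathcal{L}_1$ with $\|y-x\|<\delta$, and then $z\in Z$ with $\|x-z\|<\delta$. The triangle inequality gives $\|y-z\|<2\delta$, so $\overline{\mathcal{L}}_1\subseteq_{2\delta}Z$. Taking the infimum over all such $Z$ yields the inequality.

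Substituting $\delta'=2\delta$, we get
\[
\frac{\log D(\overline{\mathcal{L}}_1,\delta')}{\log \delta'^{-1}}\leq \frac{\log D(\mathcal{L}_1,\delta)}{\log\delta^{-1}}\cdot\frac{\log\delta^{-1}}{\log\delta^{-1}-\log 2}.
\]
As $\delta\to0^+$ the second factor tends to $1$, so taking $\limsup$ gives $\mathrm{Mdim}_{\overline{L}}(X)\leq\mathrm{Mdim}_{L}(X)$.

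The only delicate point is the degenerate case where $D(\mathcal{L}_1,\delta)$ is $0$ or $\infty$. Since $1\in\mathcal{L}_1$, we have $D\geq 1$ once $\delta<1$. If $D=+\infty$ for some $\delta$, both sides are handled by the same inequalities in the extended reals. Finiteness of $D$ follows anyway from the total boundedness in Theorem \ref{totbouiff} together with adding $\mathbb{C}1_X$. None of this needs the fact that $\overline{L}$ is a Lip-norm beyond the preceding proposition, which guarantees that $\mathrm{Mdim}_{\overline{L}}(X)$ is defined.
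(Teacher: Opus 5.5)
Your proof is correct and follows the same route as the paper. One inequality comes from monotonicity of $D(\cdot,\delta)$ under $\mathcal{L}_1\subseteq\overline{\mathcal{L}}_1$, and the other from the triangle-inequality estimate $D(\overline{\mathcal{L}}_1,2\delta)\leq D(\mathcal{L}_1,\delta)$. You are slightly more careful than the paper: it picks a minimizing subspace using Kerr's finiteness result and passes silently over the harmless rescaling between $\log(2\delta)^{-1}$ and $\log\delta^{-1}$, both of which you handle explicitly.
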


\begin{proof}
Let us choose $\delta > 0$ arbitrarily.  Then $\overline{\mathcal{L}}_1$ is the unit Lip-ball with respect to the Lip-norm $\overline{L}$. Therefore, $D \left (\mathcal {L}_1, \delta \right ) \leq D \left (\overline{\mathcal {L}}_1, \delta \right ).$ Consequently, $\mathrm {Mdim}_{L} (X) \leq \mathrm {Mdim}_{\overline {L}} (X).$

To show the reverse inequality, first note that since $L$ is adjoint-invariant, by \cite{Kerr}*{Proposition 3.2}, $D \left (\mathcal {L}_1, \delta \right ) < \infty.$ Then there exists a finite dimensional subspace $Y$ of $X$ such that $\mathcal {L}_1 \subseteq_{\delta} Y$ and $D \left (\mathcal {L}_1, \delta \right ) = \dim Y.$ We claim that $\overline{\mathcal{L}}_1 \subseteq_{2 \delta} Y.$ To see that, let us take any $x \in \overline{\mathcal{L}}_1.$ Then there exist $z \in \mathcal {L}_1$ and $y \in Y$ such that $\left \|x - z \right \| < \delta$ and $\left \|z - y \right \| < \delta.$ Then by the triangle inequality we have
$$\left \|x - y \right \| \leq \left \|x - z \right \| + \left \|z - y \right \| < \delta + \delta = 2 \delta.$$
Since this holds for any $x \in \overline{\mathcal{L}}_1,$ the desired claim follows. This shows that $$D \left (\overline{\mathcal{L}}_1, 2 \delta \right ) \leq \dim Y = D \left (\mathcal {L}_1, \delta \right ).$$ Now dividing both sides by $\log \delta^{-1}$ and letting $\delta \rightarrow 0^{+},$ the required reverse inequality follows. 
\end{proof}
Now we shall prove the main theorem of this subsection. We are going to need the following technical lemma.
\begin{lem}
Let $L$ be a Lipschitz seminorm on $X$ and $\overline {L}$ be its extension to $X.$ Let $(\overline {L})_{\mathrm {sa}}$ be the restriction of $\overline {L}$ to $X_{\mathrm {sa}}.$ Let $L_{\mathrm {sa}}$ be the restriction of $L$ to $\left (\mathrm {Dom}\ L \right )_{\mathrm {sa}}$ and $\overline {L_{\mathrm {sa}}}$ be its extension to $X_{\mathrm {sa}}.$ Then $(\overline {L})_{\mathrm {sa}} = \overline {L_{\mathrm {sa}}}.$
\end{lem}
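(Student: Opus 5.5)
The plan is to reduce the equality of the two Minkowski functionals to an equality of the underlying closed unit balls. Write $\mathcal{L}_1=\{x\in X: L(x)\leq 1\}$ and $(\mathcal{L}_1)_{\mathrm{sa}}=\{x\in(\mathrm{Dom}\,L)_{\mathrm{sa}}: L(x)\leq 1\}=\mathcal{L}_1\cap X_{\mathrm{sa}}$. By definition, $\overline{L_{\mathrm{sa}}}$ is the Minkowski functional on $X_{\mathrm{sa}}$ of the closure of $(\mathcal{L}_1)_{\mathrm{sa}}$ in $X_{\mathrm{sa}}$. Since $X_{\mathrm{sa}}$ is norm closed in $X$ (the involution is isometric, hence continuous), this closure coincides with the closure $\overline{(\mathcal{L}_1)_{\mathrm{sa}}}$ taken in $X$.

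The key claim is
\begin{displaymath}
\overline{\mathcal{L}}_1\cap X_{\mathrm{sa}}=\overline{(\mathcal{L}_1)_{\mathrm{sa}}}.
\end{displaymath}
For the inclusion ``$\supseteq$'', note that $(\mathcal{L}_1)_{\mathrm{sa}}\subseteq\mathcal{L}_1$ and $X_{\mathrm{sa}}$ is closed. For ``$\subseteq$'', take $x\in X_{\mathrm{sa}}$ together with a sequence $x_n\in\mathcal{L}_1$ with $x_n\to x$. Set $y_n=\tfrac12(x_n+x_n^{\ast})$. Each $y_n$ is self-adjoint, and by the adjoint invariance of $L$ and the triangle inequality,
\begin{displaymath}
L(y_n)\leq\tfrac12\bigl(L(x_n)+L(x_n^{\ast})\bigr)\leq 1 .
\end{displaymath}
Moreover, since the involution is isometric, $\|y_n-x\|\leq\tfrac12\|x_n-x\|+\tfrac12\|x_n^{\ast}-x^{\ast}\|\to 0$. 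Hence $x\in\overline{(\mathcal{L}_1)_{\mathrm{sa}}}$.

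To conclude, fix $x\in X_{\mathrm{sa}}$ and $r>0$. Because $x/r$ is self-adjoint, the claim gives
\begin{displaymath}
x\in r\overline{\mathcal{L}}_1 \iff x/r\in\overline{\mathcal{L}}_1\cap X_{\mathrm{sa}} \iff x\in r\,\overline{(\mathcal{L}_1)_{\mathrm{sa}}}.
\end{displaymath}
Taking the infimum over $r$, we obtain $(\overline{L})_{\mathrm{sa}}(x)=\overline{L_{\mathrm{sa}}}(x)$, including the case where both sides are $+\infty$ because no admissible $r$ exists. The only step with any content is the symmetrization $y_n$ in the claim, which is where adjoint invariance is used. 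The rest consists of unwinding definitions, together with the observation that closures taken in $X_{\mathrm{sa}}$ and in $X$ agree.
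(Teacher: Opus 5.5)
Your proposal is correct and follows essentially the paper's argument. In both, the only substantive step is symmetrizing an approximating sequence from $\mathcal{L}_1$ via $\tfrac12(x_n+x_n^{\ast})$, which stays in the unit ball by adjoint invariance and still converges to the self-adjoint limit. You package this as the set identity $\overline{\mathcal{L}}_1\cap X_{\mathrm{sa}}=\overline{(\mathcal{L}_1)_{\mathrm{sa}}}$, which yields both inequalities at once and avoids the paper's $\varepsilon$-approximation of the infimum. You also make explicit that closures in $X_{\mathrm{sa}}$ and in $X$ agree, a point the paper leaves implicit.
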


\begin{proof}
Let $\mathcal L_1$ and $\left (\mathcal {L}_1 \right )_{\mathrm {sa}}$ be the unit Lip-ball with respect to $L$ and $\overline {L}_{\mathrm {sa}}$ respectively. Since $L$ is the restriction of $\overline {L}$ to $\mathrm {Dom}\ L,$ it follows that $\left (\mathcal {L}_1 \right )_{\mathrm {sa}} \subseteq \mathcal {L}_1.$ Consequently, $(\overline {L})_{\mathrm {sa}} \leq \overline {L_{\mathrm {sa}}}$ on $X_{\mathrm {sa}}.$

To show the reverse inequality, we take $b \in X_{\mathrm {sa}}$ and $\varepsilon > 0$ arbitrarily. If $(\overline {L})_{\mathrm {sa}} (b) < \infty,$ then there exists $r_0 \in \mathbb R^{+}$ such that $r_0 < (\overline {L})_{\mathrm {sa}} (b) + \varepsilon$ and $b \in r_0 \overline {\mathcal {L}_1}.$ Then there exists a sequence $a_n \in \mathcal {L}_1$ such that $r_0 a_n \to b.$ Let $b_n = \frac {a_n + a_n^{\ast}} {2}.$ Since $b$ is self-adjoint, $r_0 b_n \rightarrow b$ and moreover, since $L$ is adjoint-invariant, $L \left (b_n \right ) \leq L \left (a_n \right ) \leq 1.$ Therefore $b_n \in \left (\mathcal {L}_1 \right )_{\mathrm {sa}}$ for all $n \geq 1$ and $b \in r_0 \overline {\left (\mathcal {L}_1 \right )_{\mathrm {sa}}}.$ Therefore, $\overline {L_{\mathrm {sa}}} (b) \leq r_0 < (\overline {L})_{\mathrm {sa}} (b) + \varepsilon.$ Since this holds for any $\varepsilon > 0,$ letting $\varepsilon \rightarrow 0^{+},$ it follows that $\overline {L_{\mathrm {sa}}} (b) \leq (\overline {L})_{\mathrm {sa}} (b).$ If $(\overline {L})_{\mathrm {sa}} (b) = \infty,$ then the reverse inequality is automatically satisfied. This completes the proof.
\end{proof}

\begin{thm} \label{qGH vs Mdim}
Let $(X,L)$ and $(Y,K)$ be compact quantum metric spaces. Then \begin{displaymath}\mathrm {dist}_{\mathrm {qGH}} ((X, L), (Y, K)) = 0\Rightarrow \mathrm {Mdim}_{L} (X) = \mathrm {Mdim}_{K} (Y).\end{displaymath}
\end{thm}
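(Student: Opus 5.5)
The plan is to reduce the statement to Rieffel's theorem for order unit spaces: two compact quantum metric spaces at quantum Gromov--Hausdorff distance zero have closures that are isometrically isomorphic. Metric dimension is then transported through that isomorphism, and the closure lemmas above are used to pass back and forth.

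\emph{Step 1: reduce to order unit spaces.} By Theorem \ref{Rieffelequality}, $\mathrm{dist}_{\mathrm{qGH}}((X,L),(Y,K))=0$ implies that the order unit compact quantum metric spaces $(\mathrm{Dom}(L)_{\mathrm{sa}},L_{\mathrm{sa}})$ and $(\mathrm{Dom}(K)_{\mathrm{sa}},K_{\mathrm{sa}})$ (cf.\ Proposition \ref{operatorsystemtoorder}) are at Rieffel distance zero.

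\emph{Step 2: apply Rieffel's theorem.} Rieffel's theorem (\cite{Rieffel-Gromov}, distance zero iff the closures are isometric) gives a unital order isomorphism $\Phi:X_{\mathrm{sa}}\to Y_{\mathrm{sa}}$ with $\overline{K_{\mathrm{sa}}}\circ\Phi=\overline{L_{\mathrm{sa}}}$. To get $\Phi$ defined on all of $X_{\mathrm{sa}}$, I will use that the completion of $\mathrm{Dom}(L)_{\mathrm{sa}}$ in norm is $X_{\mathrm{sa}}$, since $X$ is complete and $\mathrm{Dom}(L)$ is dense and $*$-invariant. By the preceding lemma, $\overline{L_{\mathrm{sa}}}=(\overline L)_{\mathrm{sa}}$ and $\overline{K_{\mathrm{sa}}}=(\overline K)_{\mathrm{sa}}$.

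\emph{Step 3: complexify.} Set $\phi(a+ib)=\Phi(a)+i\Phi(b)$. This is a unital positive linear bijection $X\to Y$, and it is an isometry of the order unit norms on self-adjoint parts, since unital order isomorphisms preserve the order unit norm.

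\emph{Step 4: show $\phi$ is bi-Lipschitz.} I want $\overline K(\phi(x))\le 2\overline L(x)$ and the symmetric inequality for $\phi^{-1}$. Write $x=a+ib$ with $a=(x+x^*)/2$ and $b=(x-x^*)/2i$. By Lemma \ref{Lip-Ext-Adj}, $\overline L(a),\overline L(b)\le\overline L(x)$. Then
\[
\overline K(\phi(x))\le\overline K(\Phi(a))+\overline K(\Phi(b))=\overline L(a)+\overline L(b)\le 2\overline L(x).
\]
Hence $(X,\overline L)$ and $(Y,\overline K)$ are bi-Lipschitz equivalent.

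\emph{Step 5: conclude.} Theorem \ref{Metric_inv} gives $\mathrm{Mdim}_{\overline L}(X)=\mathrm{Mdim}_{\overline K}(Y)$, and Lemma \ref{Mdim-Inv} then yields the claim.

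The invocation of Theorem \ref{Metric_inv} needs care, because it relies on positive unital maps being contractive in norm on the whole complex operator system. For complex elements the norm of $\phi(x)$ is only controlled up to a factor $2$, via $\|\phi(x)\|\le\|\Phi(a)\|+\|\Phi(b)\|\le 2\|x\|$. That factor is harmless: $Y_1\subseteq_\delta Z$ pulls back to $\phi^{-1}(Y_1)\subseteq_{2\delta}\phi^{-1}(Z)$, and constant rescalings of $\delta$ do not change the $\limsup$ defining Mdim.

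\emph{Main obstacle.} The delicate step is Step 2: checking that Rieffel's closure is taken in the completion $X_{\mathrm{sa}}$, and that his isometry statement really yields a unital order isomorphism between the full self-adjoint parts rather than only between the domains. I would first try to verify this directly from the state-space picture. The distance-zero hypothesis gives an affine isometry $S(X)\to S(Y)$ for the metrics $d_L$ and $d_K$. Dualizing it (via Kadison's representation of $X_{\mathrm{sa}}$ as continuous affine functions on $S(X)$) produces $\Phi$, and Lipschitz affine functions correspond to $\overline{L_{\mathrm{sa}}}$.
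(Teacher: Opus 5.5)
Your proposal is correct and follows essentially the same route as the paper. Both pass to order unit spaces via Theorem \ref{Rieffelequality} and invoke Rieffel's Theorem 7.8 to get an order isomorphism $X_{\mathrm{sa}}\to Y_{\mathrm{sa}}$ intertwining $\overline{L_{\mathrm{sa}}}$ and $\overline{K_{\mathrm{sa}}}$. Both then complexify, obtain the bi-Lipschitz constant $2$ from adjoint invariance and $(\overline{L})_{\mathrm{sa}}=\overline{L_{\mathrm{sa}}}$, and conclude with Theorem \ref{Metric_inv} and Lemma \ref{Mdim-Inv}. Your extra care about the norm of the complexified map (bounded by $2$ rather than $1$, which is harmless for the $\limsup$) is a worthwhile refinement: positive unital maps on general operator systems are only guaranteed to have norm at most $2$, so the contractivity remark the paper uses to justify Theorem \ref{Metric_inv} does not hold as stated.
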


\begin{proof}
Let $\mathrm {dist}_{\mathrm {qGH}} ((X, L), (Y, K)) = 0.$ Then by Theorem \ref{Rieffelequality}, $$\mathrm {dist}_Q \left (\left (\left (\mathrm {Dom}\ L \right )_{\mathrm {sa}}, L_{\mathrm {sa}} \right ), \left (\left (\mathrm {Dom}\ K \right )_{\mathrm {sa}}, K_{\mathrm {sa}} \right ) \right ) = 0.$$
Since $\left (\mathrm {Dom}\ L \right )_{\mathrm {sa}}$ and $\left (\mathrm {Dom}\ L \right )_{\mathrm {sa}}$ are both order-unit spaces, by \cite{Rieffel-Gromov}*{Theorem 7.8}, there exists an order preserving bijection $\varphi : X_{\mathrm {sa}} \rightarrow Y_{\mathrm {sa}}$ such that $\overline {K_{\mathrm {sa}}} \circ \varphi = \overline {L_{\mathrm {sa}}}.$
Then by complexifying $\varphi,$ it will have order-preserving bijective extension $\Phi : X \rightarrow Y$ given by $\Phi (x) = \varphi \left (x_1 \right ) + i \varphi \left (x_2 \right ),$ where $x_1 = \frac {x + x^{\ast}} {2}$ and $x_2 = \frac {x - x^{\ast}} {2 i}.$ We claim that $\Phi$ establishes bi-Lipschitz equivalence between the CQMSs $(X, \overline {L})$ and $(Y, \overline {K})$. 

To show that, let us take $x \in \mathrm {Dom}\ \overline {L}.$ Let $x_1$ and $x_2$ be its real and imaginary parts as above. Then by the adjoint invariance of $\overline{L}$, $x_1,x_2\in \mathrm{Dom}\big((\overline{L})_{\mathrm{sa}}\big)$ and $(\overline{L})_{\mathrm{sa}}(x_1)\leq \overline{L}(x),(\overline{L})_{\mathrm{sa}}(x_2)\leq\overline{L}(x)$. We have
\Bea
\overline {K} \left (\Phi (x) \right ) & = & \overline K \left (\varphi \left (x_1 \right ) + i\ \varphi \left (x_2 \right ) \right ) \\ & \leq & \overline {K} \left (\varphi \left (x_1 \right ) \right ) + \overline {K} \left (\varphi \left (x_2 \right ) \right ) \\ & = & (\overline {K})_{\mathrm {sa}} \left (\varphi \left (x_1 \right ) \right ) + (\overline {K})_{\mathrm {sa}} \left (\varphi \left (x_2 \right ) \right ) \\ & = & \overline {K_{\mathrm {sa}}} \left (\varphi \left (x_1 \right ) \right ) + \overline {K_{\mathrm {sa}}} \left (\varphi \left (x_2 \right ) \right ) \\ & = & \overline {L_{\mathrm {sa}}} \left (x_1 \right ) + \overline {L_{\mathrm {sa}}} \left (x_2 \right ) \\ & = & (\overline {L})_{\mathrm {sa}} \left (x_1 \right ) + (\overline {L})_{\mathrm {sa}} \left (x_2 \right ) \\ & = & \overline {L} \left (x_1 \right ) + \overline {L} \left (x_2 \right ) \\ & \leq & 2 \overline {L} (x). 
\Eea
For $x \in X \setminus \mathrm {Dom}\ \overline {L},$ the above inequality is automatically satisfied. Thus for all $x \in X$ we have $\overline {K} \left (\Phi (x) \right )  \leq 2\overline {L} (x).$ Similarly, we can show that $\overline {L} \left (\Phi^{-1} (y) \right ) \leq 2 \overline {K} (y)$ for all $y \in Y.$ This shows that $(X, \overline{L})$ and $(Y, \overline{K})$ are bi-Lipschitz equivalent, as claimed.

Since metric dimension remains invariant under the bi-Lipschitz equivalence, it follows that $\mathrm {Mdim}_{\overline {L}} (X) = \mathrm {Mdim}_{\overline {K}} (Y).$ Now the desired result follows by virtue of Lemma \ref{Mdim-Inv}.
\end{proof}
\subsection{Cocycle twisted crossed product \texorpdfstring{$\mathrm{C}^{\ast}$}{C*-} algebras} The underlying operator systems of our main examples of compact quantum metric spaces will be cocycle twisted reduced crossed product $C^{\ast}$-algebras. Therefore, we recall the basics of cocycle twisted crossed products in this subsection.

Let $(A,\rho,\Gamma)$ be a $C^{\ast}$-dynamical system where $A$ is a unital $C^{\ast}$-algebra; $\Gamma$ is a discrete group and $\rho:\Gamma\rightarrow\mathrm{Aut}(A)$ is an action of $\Gamma$ on $A$; $\sigma$ be a unitary $2$-cocycle on $\Gamma$. Then we shall define the reduced $C^{\ast}$-algebra of crossed product of $A$ by $\Gamma$ twisted by the cocycle $\sigma$ as the reduced cross sectional algebra of a Fell bundle. For Fell bundles and the associated reduced cross sectional algebras, the reader is referred to \cite{Exel-Fell}*{Chapter 17}.

The relevant Fell bundle is $\mathcal{G}=\bigsqcup\limits_{t \in \Gamma} G_t$ over $\Gamma$ where each fibre at $t\in\Gamma$ is defined to be $G_{t}=\{\delta_t a:a\in A\}$. The Banach space norm on $G_t$ is then simply given by $\left \|\delta_t a \right \| = \|a\|_A.$
We define the multiplication and involution on $\mathcal{G}$ in terms of the action $\rho$ and the unitary $2$-cocycle $\sigma : \Gamma \times\Gamma\rightarrow \mathbb{T}$ as follows $:$

For $\delta_s a \in G_s$ and $\delta_t b \in G_t,$ we define
\Bea
    (\delta_s a) \cdot (\delta_t b) & = & \delta_{st} \sigma(s, t) \rho_{t^{-1}}(a) b \\
    (\delta_s a)^* & = & \delta_{s^{-1}} \overline{\sigma(s^{-1}, s)}\rho_s(a^*)
\Eea
Then the reduced $C^{\ast}$-algebra of cocycle twisted crossed product $A\rtimes_{r,\rho,\sigma}\Gamma$ is defined to be the cross-sectional algebra of the above Fell bundle. The $C^{\ast}$-algebra has the following dense $\ast$-subalgebra which is going to be important for us:
\begin{displaymath}
    C_{c}(\Gamma,A):=\left \{\sum_{s\in\Gamma}\delta_{s} a_{s}:a_s\in A \right \},
\end{displaymath}
where $s$ runs over a finite subset of $\Gamma$.\\

We are going to need a field of $C^{\ast}$-algebras. To that end, let $\Omega$ be a compact Hausdorff space, $A$ be a unital $C^{\ast}$-algebra and $\Gamma$ be a discrete group acting on $A$ via the  automorphism $\rho.$ Let $\left \{\sigma_{\theta} \right \}_{\theta \in \Omega}$ be a strongly continuous family of $2$-cocycles on $\Omega.$ By a strongly continuous family we mean that for each fixed $s,t\in\Gamma$, the map $\theta\mapsto\sigma_{\theta}(s,t)$ is continuous on $\Omega$. Consider the $C^{\ast}$-algebra $B = C (\Omega, A),$ which is unital since so is $A.$ For a fixed $\theta \in \Omega,$ define $J_{\theta}^{0} : = \{f \in C (\Omega)\ :\ f (\theta) = 0 \}$ and $J_{\theta} : = \{g \in B\ :\ g (\theta) = 0 \}.$ Note that $J_{\theta}$ is a Banach module over $J_{\theta}^{0}$ by pointwise multiplication.

\begin{lem} \label{C-H Equality}
$J_{\theta} = J_{\theta}^{0} \cdot B$ for any $\theta \in \Omega.$
\end{lem}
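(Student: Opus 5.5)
The inclusion $J_{\theta}^{0}\cdot B\subseteq J_{\theta}$ is immediate, because $(fg)(\theta)=f(\theta)g(\theta)=0$ whenever $f(\theta)=0$. All the work is in the reverse inclusion. We read $J_\theta^0\cdot B$ as the set of products $fg$ with $f\in J^0_\theta$ and $g\in B$. We will show that every $g\in J_\theta$ is such a product, using the Cohen--Hewitt factorization theorem as the main tool and an explicit construction as a fallback.

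Cohen--Hewitt says the following. If $\mathcal{A}$ is a Banach algebra with a bounded approximate identity and $M$ is a Banach left $\mathcal{A}$-module, then the essential part $\overline{\mathcal{A}M}$ equals $\mathcal{A}\cdot M$. We apply it with $\mathcal{A}=J^0_\theta$, a closed ideal of the commutative $C^\ast$-algebra $C(\Omega)$, and $M=J_\theta$. Every $C^\ast$-algebra has a bounded approximate identity, so $J^0_\theta$ does. Since $J_\theta^0\cdot J_\theta\subseteq J_\theta^0\cdot B$, it then suffices to show that $J_\theta$ is essential, i.e. $J_\theta=\overline{\mathrm{span}}\,J^0_\theta J_\theta$.

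For this we build an approximate identity by hand. Fix $g\in J_\theta$ and $\varepsilon>0$. By continuity the set $U=\{\omega:\|g(\omega)\|<\varepsilon\}$ is an open neighbourhood of $\theta$. By Urysohn's lemma (the space $\Omega$ is compact Hausdorff, hence normal) there is $h\in C(\Omega)$ with $0\le h\le1$, $h(\theta)=0$ and $h\equiv1$ off $U$. Then $h\in J^0_\theta$, and
\[
\|hg-g\|_\infty=\sup_{\omega\in U}|1-h(\omega)|\,\|g(\omega)\|\le\varepsilon .
\]
So $g$ lies in the closure of $J^0_\theta J_\theta$, and Cohen--Hewitt gives $g=fk$ with $f\in J^0_\theta$ and $k\in J_\theta\subseteq B$.

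If we want to avoid quoting Cohen--Hewitt, the factorization can be written down directly. Take $f(\omega)=\|g(\omega)\|^{1/2}$, which is continuous and vanishes at $\theta$, and set $k(\omega)=g(\omega)/\|g(\omega)\|^{1/2}$ where $g(\omega)\neq0$ and $k(\omega)=0$ otherwise. Then $\|k(\omega)\|=f(\omega)$, so $k$ is continuous at every zero of $g$, and it is continuous elsewhere as a quotient of continuous functions. Hence $g=fk$ with $f\in J_\theta^0$ and $k\in B$.

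The one point that needs checking is the meaning of $J^0_\theta\cdot B$, namely that the paper intends products rather than their closed span. The factorization argument covers both readings, so the proof goes through either way.
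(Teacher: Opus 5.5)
Your proposal is correct. Its main line is the paper's argument: the paper also applies Cohen--Hewitt to $J_\theta$ as a Banach module over $J^0_\theta$. The difference is in how the approximate identity is handled. The paper builds an explicit bounded approximate identity $\widetilde{e_K}$, indexed by compact subsets $K$ of the locally compact space $\Omega\setminus\{\theta\}$ via Urysohn's lemma there, and checks $\widetilde{e_K}g\to g$ along the net. You instead quote the existence of a bounded approximate identity in the $C^\ast$-algebra $J^0_\theta$, and you verify essentiality element by element with a single Urysohn function on the compact space $\Omega$, which is all Cohen--Hewitt needs.

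Your fallback is a genuinely different and more elementary route. You factor $g=\|g(\cdot)\|^{1/2}\cdot k$ with $k=g/\|g(\cdot)\|^{1/2}$ off the zero set of $g$ and $k=0$ on it. This is valid: continuity of $k$ at zeros of $g$ follows from $\|k(\omega)\|=\|g(\omega)\|^{1/2}$, and the construction even gives $k\in J_\theta$. It makes Cohen--Hewitt superfluous for this lemma and yields explicit factors. The cost is that it uses the concrete structure of $B=C(\Omega,A)$, namely that pointwise norms are continuous scalar functions. The Cohen--Hewitt argument, by contrast, applies verbatim to any essential Banach module over an algebra with a bounded approximate identity.

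The paper later uses the lemma in the product form $g=h\cdot b$, in the proof of Lemma~\ref{Itheta}. So your reading of $J^0_\theta\cdot B$ as a set of products is the relevant one, and both of your arguments deliver it.
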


\begin{proof}
Let $X = \Omega \setminus \{\theta\}$ and $\Lambda$ be the directed set consisting of compact subsets of $X,$ ordered by set inclusions. For $K \in \Lambda,$ let $U_K$ be an open neighbourhood of $K$ such that $\overline {U_K}$ is compact. By Urysohn's lemma, there exists $e_K \in C_c (X) \subseteq C_0 (X)$ such that $0 \leq e_K \leq 1,$ $e_K \rvert_{K} \equiv 1$ and $e_K \rvert_{X \setminus U_K} = 0$ so that $\mathrm {supp} \left (e_K \right ) \subseteq \overline {U_K}.$ We claim that $\left \{e_K \right \}_{K \in \Lambda}$ is a bounded approximate identity of $C_0 (X).$ Let us choose $\varepsilon > 0$ and let $f \in C_0 (X).$ Then there exists $K_0 \in \Lambda$ such that $\left \lvert f (x) \right \rvert < \varepsilon$ for all $x \notin K_0.$ Now fix any $K \in \Lambda$ arbitrarily with $K \supseteq K_0.$ Then for all $y \in K,$ we have $e_K (y) f (y) = f (y)$ and for all $y \notin K,$ we have $\left (1 - e_K (y) \right ) \left \lvert f (y) \right \rvert \leq \left \lvert f (y) \right \rvert < \varepsilon.$ This shows that for any $K \in \Lambda$ with $K \supseteq K_0,$ we have $\left \|e_K f - f \right \|_{\infty} \leq \varepsilon.$ Thus $\left \{e_K \right \}_{K \in \Lambda}$ is an approximate identity for $C_0 (X),$ as claimed. Since $C_0 (X)$ is canonically identified with $J_{\theta}^{0},$ it follows that $\left \{\widetilde {e_K} \right \}_{K \in \Lambda}$ is an approximate identity for $J_{\theta}^{0},$ where
$$\widetilde {e_K} (x) = \begin{cases} e_K (x), \quad \mathrm{if}\ x \in X, \\ 0, \quad \hspace{8mm} \mathrm{if}\ x = \theta. \end{cases}$$
Now for any $g \in J_{\theta},$ we can get hold of an open neighbourhood $V_{\theta} \subseteq \Omega$ of $\theta$ such that for all $x \in V_{\theta},$ we have $\left \|g (x) \right \| < \varepsilon.$ Let $K_{\theta} : = \Omega \setminus V_{\theta},$ which, being a closed subset of a compact set, is compact and hence $K_{\theta} \in \Lambda.$ Fix some $K \in \Lambda$ with $K \supseteq K_{\theta}$ arbitrarily. Then for all $y \in K,$ we have $\widetilde {e_K} (y) g (y) = g (y)$ and for all $y \notin K,$ we have $\left (1 - \widetilde {e_K} (y) \right ) \left \|g (y) \right \| \leq \left \|g (y) \right \| < \varepsilon.$ This shows that $\lim\limits_{K \in \Lambda} \widetilde {e_K} g = g.$ Thus by Cohen-Hewitt Factorization Theorem we have 
$$J_{\theta} = J_{\theta}^{0} \cdot J_{\theta} \subseteq J_{\theta}^{0} \cdot B \subseteq J_{\theta}.$$ This shows that $J_{\theta}^{0} \cdot B = J_{\theta},$ as required.
\end{proof}

Let $B = C (\Omega, A).$ Then $\rho$ induces an action $\hat {\rho}$ of $\Gamma$ on $B$ given by $$\hat {\rho}_t (g) (\theta) = \rho_t (g (\theta)),$$ $t \in \Gamma,$ $g \in B$ and $\theta^{\prime} \in \Omega.$ Consider the integrated $2$-cocycle $\Sigma : \Gamma \times \Gamma \rightarrow C (\Omega, \mathbb T)$ defined by $$\Sigma (s, t) (\theta) = \sigma_{\theta} (s, t),$$ for $s, t \in \Gamma$ and $\theta \in \Omega.$ Define a map $\Phi : C (\Omega) \rightarrow B$ by $\Phi (f) (\theta) = f (\theta) 1_A,$ for all $f \in C (\Omega)$ and for all $\theta^{\prime} \in \Omega.$ Since scalar multiples of identity are in the center of $B,$ it follows that $\Phi$ maps $C (\Omega)$ into the center of $B.$ Moreover, since $C (\Omega)$ and $\Phi$ are both unital, $\Phi$ is also non-degenerate. This shows that $B$ is a $C (\Omega)$-algebra. Let us consider the twisted crossed product $\mathcal B : = B \rtimes_{r, \hat {\rho}, \Sigma} \Gamma.$ Note that $\Phi$ induces a non-degenerate $\ast$-homomorphism $\tilde {\Phi} : C (\Omega) \rightarrow \mathcal B,$ given by $\tilde {\Phi} (f) = \delta_e \Phi (f),$ for all $f \in C (\Omega).$ Also for any $t \in \Gamma,$ $f \in C (\Omega)$ and $\theta^{\prime} \in \Omega,$ we have 
$$\hat {\rho}_t (\Phi (f)) (\theta) = \rho_t (\Phi (f) (\theta)) = \rho_t (f (\theta) 1_A) = f (\theta) 1_A = \Phi (f) (\theta).$$
This shows that the image of $\Phi$ is invariant under the action $\hat {\rho}.$ In other words, the image of $\tilde {\Phi}$ is in the center of $\mathcal B.$ So $\mathcal B$ also has the structure of a $C (\Omega)$-algebra. 

Recall that for any $\theta \in \Omega$ the fibre of the $C(\Omega)$-algebra $\mathcal{B}$ at $\theta\in\Omega$ is the quotient $\mathcal{B}/I_{\theta},$ where $I_{\theta}$ is the ideal in $\mathcal{B}$ defined by 
$$I_{\theta} : = \overline {\mathrm {span} \left \{\tilde {\Phi} (f) \xi\ :\ f \in J_{\theta}^{0},\ \xi \in \mathcal B \right \}}.$$
The next lemma helps us to identify the ideal $I_{\theta}$.
\begin{lem}\label{Itheta}
$I_{\theta} = J_{\theta} \rtimes_{r, \hat {\rho},\Sigma} \Gamma$ for all $\theta \in \Omega.$
\end{lem}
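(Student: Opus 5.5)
We prove the two inclusions separately, using Lemma \ref{C-H Equality} together with the fact that $J_{\theta}$ is a $\hat{\rho}$-invariant closed two-sided ideal of $B$. Invariance holds because $\hat{\rho}_t(g)(\theta)=\rho_t(g(\theta))=0$ whenever $g(\theta)=0$. Hence $J_{\theta}\rtimes_{r,\hat{\rho},\Sigma}\Gamma$ makes sense as the reduced cross-sectional algebra of the subbundle $\{\delta_t g: g\in J_\theta\}$ of the Fell bundle defining $\mathcal B$. By the standard Fell bundle results on ideals (\cite{Exel-Fell}), it sits inside $\mathcal B$ as the closed two-sided ideal obtained as the closure of $C_c(\Gamma,J_\theta)$. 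I would record this identification first, since both inclusions are proved by comparing dense subspaces.

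\textbf{Inclusion $I_\theta\subseteq J_\theta\rtimes_{r,\hat\rho,\Sigma}\Gamma$.} The right-hand side is closed, so it suffices to show $\tilde\Phi(f)\xi$ lies in it for $f\in J^0_\theta$ and $\xi\in\mathcal B$. By continuity of multiplication and density we may take $\xi=\sum_s\delta_s b_s\in C_c(\Gamma,B)$. The multiplication rule gives
\[\delta_e\Phi(f)\cdot\delta_s b_s=\delta_s\Sigma(e,s)\hat\rho_{s^{-1}}(\Phi(f))b_s=\delta_s\Sigma(e,s)\Phi(f)b_s,\]
using the $\hat\rho$-invariance of $\Phi(f)$ already observed. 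The coefficient $\Sigma(e,s)\Phi(f)b_s$ vanishes at $\theta$, so it lies in $J_\theta$, and this inclusion follows. (Normalisation of $\Sigma(e,s)$ is irrelevant here, since it is merely a function in $C(\Omega,\mathbb T)$.)

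\textbf{Inclusion $J_\theta\rtimes_{r,\hat\rho,\Sigma}\Gamma\subseteq I_\theta$.} Since $I_\theta$ is closed, it suffices to show $\delta_s g\in I_\theta$ for $g\in J_\theta$ and $s\in\Gamma$. By Lemma \ref{C-H Equality}, write $g=f\cdot b$ with $f\in J^0_\theta$ and $b\in B$; that is, $g=\Phi(f)b$. Then
\[\delta_s g=\delta_s\Phi(f)b=\tilde\Phi(f)\cdot\big(\delta_s\,\overline{\Sigma(e,s)}\,b\big),\]
by the same computation run backwards. Here $\overline{\Sigma(e,s)}$ is a unitary in $C(\Omega)\subseteq B$ via $\Phi$, and it commutes with everything relevant. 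Since $\delta_s\overline{\Sigma(e,s)}b\in\mathcal B$, we get $\delta_s g\in I_\theta$. Taking finite sums and closures yields the inclusion.

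\textbf{Main obstacle.} The only delicate point is the first step: justifying that the closure of $C_c(\Gamma,J_\theta)$ inside $\mathcal B$ coincides with the reduced twisted crossed product $J_\theta\rtimes_{r,\hat\rho,\Sigma}\Gamma$ defined intrinsically. In other words, the reduced norm of the subbundle must agree with the norm restricted from $\mathcal B$. I would handle this by invoking the Fell bundle result that, for an ideal in the unit fibre, the reduced cross-sectional algebra of the restricted bundle embeds isometrically as an ideal. One can verify this via the conditional expectation onto the unit fibre: the reduced norm is determined by $E(x^*x)$, and $E$ restricts compatibly to the subbundle. The rest is the algebra above.
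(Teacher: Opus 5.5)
Your proposal is correct and follows the paper's proof essentially step for step. You get $I_\theta\subseteq J_\theta\rtimes_{r,\hat\rho,\Sigma}\Gamma$ from the coefficientwise computation of $\tilde\Phi(f)\cdot\xi$ for $\xi\in C_c(\Gamma,B)$ plus density, and the reverse inclusion by factoring $g=\Phi(f)b$ via Lemma \ref{C-H Equality}, so that each $\delta_s g$ has the form $\tilde\Phi(f)\cdot\eta$. The paper leaves implicit the two points you make explicit, and you handle both correctly. The first is the factor $\Sigma(e,s)$, which is trivial for normalized cocycles. The second is the identification of the closure of $C_c(\Gamma,J_\theta)$ in $\mathcal B$ with the intrinsic reduced crossed product; this is cleanest via the fact that the reduced completion is the one on which the canonical conditional expectation is faithful.
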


\begin{proof}
Let $f \in J_{\theta}^{0}$ and $\xi \in C_c (\Gamma, B)$. Since $\Phi (f)$ is invariant under the action $\hat {\alpha}$ of $\Gamma$ on $B$, it follows that for any $t \in \Gamma$,
$$(\tilde {\Phi} (f) * \xi) (t) = \Phi (f) \xi (t) \in B.$$
Also,
$$\Phi (f) \xi (t) (\theta) = f (\theta) \xi (t) (\theta) = 0.$$
So, $(\tilde {\Phi} (f) * \xi) (t) \in J_{\theta}$ for any $t \in \Gamma$. Consequently, $\tilde {\Phi} (f) * \xi \in C_c (\Gamma, J_{\theta})$. Now for any fixed $f \in J_{\theta}^{0}$ and $\eta \in \mathcal B$, $\tilde{\Phi} (f) * \eta$ can be approximated by elements of the form $\tilde {\Phi} (f) * \xi$ in the reduced norm, for $\xi \in C_c (\Gamma, B)$, and hence $\tilde {\Phi} (f) * \eta \in J_{\theta} \rtimes_{r, \hat {\rho}, \Sigma} \Gamma$ for any $f \in J_{\theta}^{0}$ and for any $\eta \in \mathcal B$. This, in turn, implies that $I_{\theta} \subseteq J_{\theta} \rtimes_{r, \hat {\rho}, \Sigma} \Gamma$.

\vspace{2mm}

For the reverse inclusion, we will make use of Lemma \ref{C-H Equality}. Fix some $t_0 \in \Gamma$ and $g \in J_{\theta}$. Then by virtue of Lemma \ref{C-H Equality}, there exists some $h \in J_{\theta}^{0}$ and $b \in B$ such that $g = h \cdot b = \Phi (h) b$. Since $\Phi (C (\Omega))$ is invariant under the action $\hat {\rho}$ of $\Gamma$ on $B$, it follows that
$$\delta_{t_0} g = (\delta_e h) * (\delta_{t_0} b) = \tilde {\Phi} (h) * (\delta_{t_0} b).$$
This shows that $\delta_{t_0} g \in \{\tilde{\Phi} (f) * \eta\ :\ f \in J_{\theta}^{0},\ \eta \in \mathcal B\}$. Therefore, 
$$C_c (\Gamma, J_{\theta}) \subseteq \overline{\mathrm {span} \{\tilde {\Phi} (f) * \eta\ : f \in J_{\theta}^{0},\ \eta \in \mathcal {B}\}} = I_{\theta}.$$ 
Consequently, $J_{\theta} \rtimes_{r, \hat {\alpha}, \Sigma} \Gamma \subseteq I_{\theta},$ as required.
\end{proof}

To pass an exact sequence of $C^{\ast}$-algebras into the exact sequence of the corresponding reduced crossed products, we use the machinery of Fell bundles. Passing to reduced cross-sectional algebras in general does not preserve exact sequences, but Exel’s theorem provides that reduced crossed product of an exact sequence is exact whenever the acting discrete group is exact \cite{Exel-Exact}*{Theorem 4.4.}.

We construct a Fell bundle $$\mathcal{E} = \bigsqcup\limits_{t \in \Gamma} E_t$$ over $\Gamma,$ where the fiber $E_t$ over $t \in \Gamma$ consists of elements of the form $\delta_t a$ with $a \in B.$ The Banach space norm on $E_t$ is then simply given by $\left \|\delta_t a \right \| = \|a\|_B.$
We define the multiplication and involution on $\mathcal{E}$ in terms of the action $\hat{\rho}$ and the integrated $2$-cocycle $\Sigma : \Gamma \times \Gamma \to C(\Omega, \mathbb{T})$ as follows $:$

For $\delta_s a \in E_s$ and $\delta_t b \in E_t,$ we define
\Bea
    (\delta_s a) \cdot (\delta_t b) & = & \delta_{st} \Sigma(s, t) \hat{\rho}_{t^{-1}}(a) b \\
    (\delta_s a)^* & = & \delta_{s^{-1}} \overline{\Sigma(s^{-1}, s)} \hat{\rho}_s(a^*)
\Eea
Note that the reduced cross-sectional algebra $C_r^{\ast}(\mathcal{E})$ is, by definition, the twisted reduced crossed product $B \rtimes_{r, \hat{\rho}, \Sigma} \Gamma.$ Similarly, by restricting the fibers to the ideal $J_{\theta},$ we obatin an ideal $\mathcal{J}_{\theta}$ of the Fell bundle $\mathcal E.$ Finally, we define the quotient Fell bundle $\mathcal{Q}_{\theta} = \mathcal E/ \mathcal J_{\theta},$ where each fiber is the Banach space $E_t/ \left (J_{\theta} \right )_t \cong \delta_t A.$ Then we have the following lemma$:$

\begin{lem}
Let $\mathcal F_{\theta}$ be the semidirect product bundle associated with the $C^{\ast}$-dynamical system $\left (A, \rho, \sigma_{\theta} \right )$ whose fibers are $F_t = \delta_t A.$ Then the quotient semidirect product bundle $\mathcal E/ \mathcal J_{\theta} \cong \mathcal F_{\theta}.$ Consequently, $C_r^{\ast} \left (\mathcal E/ \mathcal J_{\theta} \right ) \cong A \rtimes_{r, \rho, \sigma_{\theta}} \Gamma.$
\end{lem}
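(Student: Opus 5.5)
The plan is to write down an explicit fibrewise map between the two bundles and check that it respects the Fell bundle operations. For each $t\in\Gamma$, let $\mathrm{ev}_\theta:B=C(\Omega,A)\to A$ be evaluation at $\theta$. This is a surjective $\ast$-homomorphism with kernel $J_\theta$. Hence it induces an isometric isomorphism of Banach spaces
\[
\pi_t:E_t/(J_\theta)_t\to F_t,\qquad \pi_t\big(\delta_t g+(J_\theta)_t\big)=\delta_t\, g(\theta).
\]
Well-definedness and injectivity are exactly the statement $\ker \mathrm{ev}_\theta=J_\theta$. Isometry amounts to showing that the quotient norm $\inf_{h\in J_\theta}\|g+h\|_B$ equals $\|g(\theta)\|_A$. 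The inequality $\ge$ is clear. For $\le$, take the constant function $a=g(\theta)$ and set $h=a-g\in J_\theta$; then $\|g+h\|_B=\|a\|_A$. The same computation gives surjectivity. Alternatively, one can just quote that $B/J_\theta\cong A$ as $C^\ast$-algebras via $\mathrm{ev}_\theta$.

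Next I will check that $\pi=\bigsqcup_t\pi_t$ intertwines multiplication and involution. Recall that the quotient bundle operations are induced from those of $\mathcal E$, which is legitimate since $\mathcal J_\theta$ is an ideal of $\mathcal E$. For $g\in E_s$ and $k\in E_t$, the product in $\mathcal E$ is $\delta_{st}\Sigma(s,t)\hat\rho_{t^{-1}}(g)k$. Evaluating at $\theta$ gives
\[
\Sigma(s,t)(\theta)\,\rho_{t^{-1}}(g(\theta))\,k(\theta)=\sigma_\theta(s,t)\,\rho_{t^{-1}}(g(\theta))\,k(\theta),
\]
using $\hat\rho_{t^{-1}}(g)(\theta)=\rho_{t^{-1}}(g(\theta))$ and the fact that $\mathrm{ev}_\theta$ is multiplicative. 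This is exactly the product of $\delta_s g(\theta)$ and $\delta_t k(\theta)$ in $\mathcal F_\theta$. The involution is handled the same way: $\overline{\Sigma(s^{-1},s)}(\theta)=\overline{\sigma_\theta(s^{-1},s)}$ and $\hat\rho_s(g^\ast)(\theta)=\rho_s(g(\theta)^\ast)$. Continuity and bundle-topology compatibility are automatic, since $\Gamma$ is discrete. So $\pi$ is an isomorphism of Fell bundles $\mathcal E/\mathcal J_\theta\cong\mathcal F_\theta$.

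For the consequence, an isomorphism of Fell bundles over the same discrete group induces a $\ast$-isomorphism of the compactly supported section algebras $C_c$. It also respects the conditional expectations onto the unit fibres, because $\pi_e$ is a $C^\ast$-isomorphism $B/J_\theta\cong A$. Consequently it preserves the left regular representation norms on $\ell^2$-modules, and so it extends to an isomorphism of the reduced cross-sectional algebras. Finally, $C_r^\ast(\mathcal F_\theta)$ is by definition $A\rtimes_{r,\rho,\sigma_\theta}\Gamma$.

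The only step requiring care is the identification of the quotient norm with the norm of $A$; the remaining steps are routine verifications.
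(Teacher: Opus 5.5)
Your proposal is correct and follows essentially the same route as the paper: both apply $\mathrm{ev}_\theta$ fibrewise and check compatibility with the twisted multiplication and involution. The paper phrases this as a surjective morphism $\mathcal E\to\mathcal F_\theta$ with kernel $\mathcal J_\theta$ followed by a first isomorphism theorem for Fell bundles, whereas you define the map directly on the quotient and explicitly verify the quotient-norm isometry and the passage to reduced cross-sectional algebras, both of which the paper leaves implicit.
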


\begin{proof}
We define a fiber-wise map $\Phi : \mathcal{E} \to \mathcal{F}_{\theta}$. For each $t \in \Gamma,$ the map on the fiber $\Phi_t : E_t \to F_t$ is given by
$$\Phi_t(\delta_t b) = \delta_t \mathrm{ev}_{\theta}(b).$$
Because $\mathrm{ev}_{\theta} : B \to A$ is a surjective linear map, $\Phi$ is clearly a surjective map of semidirect product bundles. 

We now verify that $\Phi$ is a morphism of semidirect product bundles by checking that it preserves the multiplication and involution. Let $\delta_s a, \delta_t b \in \mathcal{E}.$ Their product in $\mathcal E$ is given by
$$(\delta_s a) \cdot_{\mathcal{E}} (\delta_t b) = \delta_{st} \big( \Sigma(s, t) \hat{\rho}_{t^{-1}}(a) b \big).$$
Since $\mathrm{ev}_{\theta}$ is a $*$-homomorphism, it follows that
\Bea
\Phi_{st} \big( (\delta_s a) \cdot_{\mathcal{E}} (\delta_t b) \big) & = & \delta_{st} \mathrm{ev}_{\theta} \big( \Sigma(s, t) \hat{\rho}_{t^{-1}}(a) b \big) \\
& = & \delta_{st} \big( \mathrm{ev}_{\theta}(\Sigma(s, t)) \cdot \mathrm{ev}_{\theta}(\hat{\rho}_{t^{-1}}(a)) \cdot \mathrm{ev}_{\theta}(b) \big) \\ & = & \delta_{st} \big( \sigma_{\theta}(s, t) \rho_{t^{-1}}(\mathrm{ev}_{\theta}(a)) \mathrm{ev}_{\theta}(b) \big).
\Eea
On the other hand, we have
\Bea
\Phi_s(\delta_s a) \cdot_{\mathcal{F}_{\theta}} \Phi_t(\delta_t b) & = & (\delta_s \mathrm{ev}_{\theta}(a)) \cdot_{\mathcal{F}_{\theta}} (\delta_t \mathrm{ev}_{\theta}(b)) \\
& = & \delta_{st} \big( \sigma_{\theta}(s, t) \rho_{t^{-1}}(\mathrm{ev}_{\theta}(a)) \mathrm{ev}_{\theta}(b) \big).
\Eea
This shows that $\Phi$ preserves the multiplication of the semidirect product bundles. An analogous calculation shows that $\Phi$ also preserves the involution, i.e., $\Phi$ is a surjective morphism of semidirect product bundles.
The kernel of this morphism is gievn by
$$\ker(\Phi) = \{ \delta_t b \in \mathcal{E} \mid \mathrm{ev}_{\theta}(b) = 0_A, t \in \Gamma \} = \{ \delta_t b \in \mathcal{E} \mid b \in J_{\theta}, t \in \Gamma \}.$$
This is exactly the ideal Fell bundle $\mathcal{J}_{\theta}.$ So by the First Isomorphism Theorem for Fell bundles, we obtain a canonical isomorphism $\mathcal{E} / \mathcal{J}_{\theta} \cong \mathcal{F}_{\theta}.$ 

\end{proof}

With these identifications at our disposal, we have the following exact sequence of twisted crossed product $C^{\ast}$-algebras.

\begin{lem} \label{Fell_Exactness}
Let $\Gamma$ be an {\bf exact} discrete group. Then for any $\theta \in \Omega,$ we have the following short exact sequence of twisted crossed product $C^{\ast}$-algebras, 
$$ 0 \rightarrow J_{\theta} \rtimes_{r, \hat{\rho}, \Sigma} \Gamma \hookrightarrow B \rtimes_{r, \hat{\rho}, \Sigma} \Gamma \xrightarrow{\pi_{\theta}} A \rtimes_{r, \rho, \sigma_{\theta}} \Gamma \rightarrow 0,$$
where $\pi_{\theta}$ is a $\ast$-homomorphism induced from $\mathrm {ev}_{\theta} : B \rightarrow A.$
\end{lem}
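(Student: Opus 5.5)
The plan is to obtain the sequence by applying Exel's exactness theorem for reduced cross-sectional algebras of Fell bundles \cite{Exel-Exact}*{Theorem 4.4} to the short exact sequence of Fell bundles
\[
0\rightarrow \mathcal J_{\theta}\hookrightarrow \mathcal E\rightarrow \mathcal E/\mathcal J_{\theta}\rightarrow 0 .
\]
The three terms are then identified with the algebras in the statement, using the lemmas just proved.

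\textbf{Step 1: $\mathcal J_{\theta}$ is an ideal of $\mathcal E$.} Since $J_{\theta}$ is a closed two-sided ideal of $B$, I check that $\mathcal J_{\theta}$ is a closed ideal of the Fell bundle $\mathcal E$, i.e. that it is stable under products with $\mathcal E$ from either side and under the involution. From the formulas
\[
(\delta_s a)(\delta_t b)=\delta_{st}\Sigma(s,t)\hat\rho_{t^{-1}}(a)b,
\qquad
(\delta_s a)^*=\delta_{s^{-1}}\overline{\Sigma(s^{-1},s)}\hat\rho_s(a^*),
\]
this reduces to two facts about $J_\theta$:
\begin{itemize}
\item it is invariant under $\hat\rho$, because $\hat\rho_t(g)(\theta)=\rho_t(g(\theta))=0$;
\item it is a module over $C(\Omega,\mathbb T)$ acting by the scalars $\Sigma(s,t)$.
\end{itemize}

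\textbf{Step 2: exactness.} Exel's theorem says that when $\Gamma$ is exact,
\[
0\rightarrow C^*_r(\mathcal J_\theta)\rightarrow C^*_r(\mathcal E)\rightarrow C^*_r(\mathcal E/\mathcal J_\theta)\rightarrow 0
\]
is exact. The middle term is $B\rtimes_{r,\hat\rho,\Sigma}\Gamma$ by definition. By the preceding lemma, $C^*_r(\mathcal E/\mathcal J_\theta)\cong A\rtimes_{r,\rho,\sigma_\theta}\Gamma$.

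\textbf{Step 3: the quotient map.} I check that the quotient map is the $\ast$-homomorphism $\pi_\theta$ induced by $\mathrm{ev}_\theta$. On $C_c(\Gamma,B)$ it sends $\sum_t\delta_t b_t$ to $\sum_t\delta_t b_t(\theta)$. By density this determines $\pi_\theta$ uniquely.

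\textbf{Step 4: the left-hand term.} I identify $C^*_r(\mathcal J_\theta)$ with $J_\theta\rtimes_{r,\hat\rho,\Sigma}\Gamma$, viewed as a subalgebra of $B\rtimes_{r,\hat\rho,\Sigma}\Gamma$. Here I regard $J_\theta\rtimes_{r,\hat\rho,\Sigma}\Gamma$ as the closure of $C_c(\Gamma,J_\theta)$, consistent with Lemma \ref{Itheta}.

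I expect this to be the main obstacle. The reduced norm on $C^*_r(\mathcal J_\theta)$, computed intrinsically, must agree with the norm inherited from $C^*_r(\mathcal E)$, so that the inclusion of bundles induces an injective map.

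The approach I would try first uses the conditional expectations onto the unit fibres. Fix $x\in C_c(\Gamma,J_\theta)$. The reduced norm of $x$ is determined by $\|E(x^*x)\|$-type quantities computed in the unit fibre, namely in $J_\theta\subset B$. Since $J_\theta$ is an ideal, the right Hilbert $J_\theta$-module $\ell^2(\mathcal J_\theta)$ sits inside $\ell^2(\mathcal E)$. This gives an isometric inclusion of reduced cross-sectional algebras, which is a standard fact for ideals in Fell bundles (Exel's book, Chapter 21). Exel's statement of the exact sequence already contains this identification, so in the write-up I would simply cite it together with the identification of $I_\theta$ in Lemma \ref{Itheta}.
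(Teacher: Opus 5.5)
Your proposal is correct and follows the paper's proof. Both apply Exel's exactness theorem for exact $\Gamma$ to the Fell bundle sequence $0\to\mathcal J_{\theta}\to\mathcal E\to\mathcal E/\mathcal J_{\theta}\to 0$, then identify $C^*_r(\mathcal E)$, $C^*_r(\mathcal J_\theta)$ and $C^*_r(\mathcal E/\mathcal J_\theta)\cong A\rtimes_{r,\rho,\sigma_\theta}\Gamma$ via the preceding lemma. Your Steps 1 and 4 only add details the paper takes for granted: that $\mathcal J_\theta$ is a bundle ideal, and that $C^*_r(\mathcal J_\theta)$ embeds isometrically in $C^*_r(\mathcal E)$.
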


\begin{proof}
By the previous lemma, we have got hold of a short exact sequence of semidirect product bundles, namely
$$0 \rightarrow \mathcal J_{\theta} \hookrightarrow \mathcal E \xrightarrow {\Phi} \mathcal F_{\theta} \rightarrow 0.$$
Since $\Gamma$ is an exact discrete group, a fundamental result due to Exel \cite{Exel-Exact}*{Theorem 4.4.} ensures that the functor $C_r^{\ast} (\cdot)$ mapping semidirect product bundles into its associated reduced cross sectional algebra is exact, i.e., we have the following short exact sequence of $C^{\ast}$-algebras
$$0 \rightarrow C_r^{\ast}(\mathcal{J}_{\theta}) \hookrightarrow C_r^{\ast}(\mathcal{E}) \xrightarrow{\pi_{\theta}} C_r^{\ast}(\mathcal{F}_{\theta}) \rightarrow 0,$$
where $\pi_{\theta}$ is the integrated map induced by $\Phi.$ The required result then follows by identifying the reduced cross sectional algebras with the twisted reduced crossed product $C^{\ast}$-algebras.
\end{proof}

\begin{thm} \label{upper semicontinuity}
Let $(A,\rho,\Gamma)$ be a $C^{\ast}$-dynamical system where $A$ is a unital $C^{\ast}$-algebra and $\Gamma$ be an {\bf exact} discrete group; $\{\sigma_{\theta}\}_{\theta\in\Omega}$ is a strongly continuous one parameter family of unitary $2$-cocycles on $\Gamma$ for some compact parameter space $\Omega$. Then the family $\{A \rtimes_{r, \rho, \sigma_\theta} \Gamma\}_{\theta\in\Omega}$ is an upper semicontinuous family of $C^{\ast}$-algebras on $\Omega$ i.e. for any $v\in B \rtimes_{r, \rho, \Sigma} \Gamma$, the map $\theta\mapsto \left \| \pi_{\theta} (v) \right \|_{\theta}$ is upper semicontinuous on $\Omega,$ where $\| \cdot \|_{\theta}$ denotes the $C^{\ast}$-norm of $A \rtimes_{r, \rho, \sigma_\theta} \Gamma.$ Consequently, for any $\mathfrak{a} \in C_c (\Gamma, A),$ the map $\theta \mapsto \left \|\mathfrak{a} \right \|_{\theta}$ is upper semicontinuous on $\Omega.$
\end{thm}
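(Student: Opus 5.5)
The plan is to use the general fact that a $C(\Omega)$-algebra is automatically upper semicontinuous. The fibres of $\mathcal B = B\rtimes_{r,\hat\rho,\Sigma}\Gamma$ will be identified with the twisted crossed products $A\rtimes_{r,\rho,\sigma_\theta}\Gamma$ via Lemma \ref{Itheta} and Lemma \ref{Fell_Exactness}.

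First, recall that $\mathcal B$ is a $C(\Omega)$-algebra through the central, non-degenerate map $\tilde\Phi$, with ideals $I_\theta$. Lemma \ref{Itheta} gives $I_\theta = J_\theta\rtimes_{r,\hat\rho,\Sigma}\Gamma$. Exactness of $\Gamma$ and Lemma \ref{Fell_Exactness} identify this ideal with $\ker\pi_\theta$. Hence $\pi_\theta$ induces an isometric $\ast$-isomorphism
\[
\mathcal B/I_\theta \cong A\rtimes_{r,\rho,\sigma_\theta}\Gamma,
\]
so that $\|\pi_\theta(v)\|_\theta = \|v + I_\theta\|_{\mathcal B/I_\theta}$. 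This identification is the main point where exactness is needed. Without it, the kernel of $\pi_\theta$ could be strictly larger than $J_\theta\rtimes_r\Gamma$, and the fibre norm would no longer be the reduced norm.

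Second, I prove upper semicontinuity of $\theta\mapsto\|v+I_\theta\|$ directly. Fix $\theta_0$ and $\varepsilon>0$. Since $\|v+I_{\theta_0}\| = \inf\{\|v-w\|: w\in I_{\theta_0}\}$ and $I_{\theta_0}$ is the closed span of the elements $\tilde\Phi(f)\xi$ with $f\in J^0_{\theta_0}$, I can choose $w=\sum_{i=1}^n\tilde\Phi(f_i)\xi_i$ with $\|v-w\|<\|v+I_{\theta_0}\|+\varepsilon/2$.

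Each $f_i$ is continuous and vanishes at $\theta_0$, so there is a neighbourhood $U$ of $\theta_0$ on which $|f_i(\theta)|\,\|\xi_i\| < \varepsilon/(2n)$ for every $i$. For $\theta\in U$, write $f_i = (f_i - f_i(\theta)) + f_i(\theta)$, where $f_i - f_i(\theta)\in J^0_\theta$. This gives
\[
\tilde\Phi(f_i)\xi_i \equiv f_i(\theta)\,\xi_i \pmod{I_\theta}.
\]
Consequently
\[
\|v+I_\theta\| \le \Big\|v - w + \sum_i f_i(\theta)\xi_i\Big\| < \|v+I_{\theta_0}\| + \varepsilon ,
\]
which is upper semicontinuity at $\theta_0$. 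This is the standard argument that any $C(\Omega)$-algebra is upper semicontinuous.

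Finally, for $\mathfrak a=\sum_s\delta_s a_s\in C_c(\Gamma,A)$, take $v=\sum_s\delta_s\hat a_s\in C_c(\Gamma,B)$, where $\hat a_s$ is the constant function with value $a_s$. Then $\pi_\theta(v)=\mathfrak a$ for every $\theta$, since $\pi_\theta$ acts on $C_c$ by evaluating coefficients. So $\theta\mapsto\|\mathfrak a\|_\theta$ is upper semicontinuous by the previous step.
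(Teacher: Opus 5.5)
Your proposal is correct and follows the paper's proof: you identify the fibre $\mathcal B/I_\theta$ with $A\rtimes_{r,\rho,\sigma_\theta}\Gamma$ via Lemma \ref{Itheta} and the exactness Lemma \ref{Fell_Exactness}, then use upper semicontinuity of the fibre norms of a $C(\Omega)$-algebra. The paper cites Lemma 3.2(i) of Dadarlat--Winter for that last step, whereas you prove it by hand, and you also spell out the lift of $\mathfrak a$ to constant coefficient functions, which the paper leaves implicit.
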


\begin{proof}
We have already seen that $B \rtimes_{r, \hat{\rho}, \Sigma} \Gamma$ is a $C(\Omega)$-algebra. For any $\theta\in\Omega$, the fibre of this $C(\Omega)$-algebra at $\theta$ is $B \rtimes_{r, \hat{\rho}, \Sigma} \Gamma /I_{\theta}$. But $I_{\theta}$ is isomorphic to $J_{\theta}\rtimes_{r,\hat{\rho},\Sigma}\Gamma$ by Lemma \ref{Itheta}. Then the fibre is isomorphic to $A\rtimes_{r,\rho,\sigma_{\theta}}\Gamma$ by the previous Lemma \ref{Fell_Exactness}. Therefore $\{A\rtimes_{r,\rho,\sigma_{\theta}}\Gamma\}_{\theta\in\Omega}$ is an upper semicontinuous family of $C^{\ast}$-algebras by \cite{Dadarlat-Winter}*{Lemma 3.2 (i)}.  
\end{proof}
Note that the lower semicontinuity of the family $\{A\rtimes_{r,\rho,\sigma_{\theta}}\Gamma\}_{\theta\in\Omega}$ is automatic (even without the assumption of exactness of the group $\Gamma$) by \cite{Rieffel-Cont}*{Theorem 2.5}. Therefore, we have the following corollary which will be crucial for our continuity results of Subsection 3.1:
\begin{cor}\label{continuousfielCalgebras}
    Under the assumptions of the previous theorem, the family $\{A\rtimes_{r,\rho,\sigma_{\theta}}\Gamma\}_{\theta\in\Omega}$ is a continuous family of $C^{\ast}$-algebras. In particular, for any $\mathfrak{a}\in C_{c}(\Gamma,A)$, the function $\theta\mapsto\lvert\lvert\mathfrak{a}\rvert\rvert_{\theta}$ is continuous.
\end{cor}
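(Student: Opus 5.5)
The corollary combines the upper semicontinuity from Theorem \ref{upper semicontinuity} with a lower semicontinuity statement. Continuity of a field means that for every $v\in\mathcal B=B\rtimes_{r,\hat\rho,\Sigma}\Gamma$ the map $\theta\mapsto\|\pi_\theta(v)\|_\theta$ is continuous. Theorem \ref{upper semicontinuity} already gives upper semicontinuity under the exactness assumption, so what remains is lower semicontinuity.

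For lower semicontinuity I would invoke \cite{Rieffel-Cont}*{Theorem 2.5}. It states that for a twisted reduced crossed product by a discrete group whose coefficient field and cocycles vary continuously, the reduced norms are lower semicontinuous. The coefficient field here is the constant field $B=C(\Omega,A)$ with fibres $A$, and $\Sigma$ is continuous in $\theta$ by strong continuity.

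If one prefers to see this directly, I would first take $\mathfrak a=\sum_{s\in F}\delta_s a_s\in C_c(\Gamma,A)$ and realise $\|\mathfrak a\|_\theta$ in the regular representation on $\ell^2(\Gamma,H)$, with $A\subseteq B(H)$ faithful. The operator $\lambda_\theta(\mathfrak a)$ acts by
\[
(\lambda_\theta(\mathfrak a)\xi)(t)=\sum_{s\in F}\sigma_\theta(\cdot,\cdot)\,\rho_{\cdot}(a_s)\,\xi(s^{-1}t),
\]
where the cocycle factors involve only finitely many values $\sigma_\theta(s,\cdot)$. For finitely supported unit vectors $\xi,\eta$, the quantity $\langle\lambda_\theta(\mathfrak a)\xi,\eta\rangle$ is a finite sum of continuous functions of $\theta$. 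Since $\|\mathfrak a\|_\theta$ is the supremum of these continuous functions over such $\xi,\eta$, it is lower semicontinuous.

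Next I would pass from $C_c$ to a general $v\in\mathcal B$ by approximation. The image of $C_c(\Gamma,B)$ is dense in $\mathcal B$, and each $\pi_\theta$ is contractive. Hence $\theta\mapsto\|\pi_\theta(v)\|_\theta$ is a uniform limit of the functions $\theta\mapsto\|\pi_\theta(v_n)\|_\theta$, where $v_n=\sum_s\delta_s b_s$ with $b_s\in C(\Omega,A)$. For such $v_n$, both the coefficients $b_s(\theta)$ and the cocycle vary continuously in $\theta$, so the matrix coefficient argument above still applies. A uniform limit of lower semicontinuous functions is lower semicontinuous, and likewise for upper semicontinuous functions. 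Combined with Theorem \ref{upper semicontinuity}, this gives continuity.

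The "in particular" statement follows by viewing $\mathfrak a\in C_c(\Gamma,A)$ as the element of $C_c(\Gamma,B)$ with constant coefficients $\theta\mapsto a_s$, whose image under $\pi_\theta$ is $\mathfrak a$. The only delicate point is checking that Rieffel's hypotheses match our Fell-bundle definition of $\|\cdot\|_\theta$, i.e.\ that the reduced cross-sectional norm agrees with the regular-representation norm. This holds by the definition in \cite{Exel-Fell}, so I expect no genuine obstacle.
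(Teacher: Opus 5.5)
Your proposal is correct and follows the paper's route: upper semicontinuity comes from Theorem \ref{upper semicontinuity}, where exactness is used to identify the fibres, and lower semicontinuity is quoted from \cite{Rieffel-Cont}*{Theorem 2.5}, which the paper notes holds even without exactness. Your supplementary matrix-coefficient argument on $\ell^2(\Gamma,H)$, extended to all of $\mathcal B$ by uniform approximation from $C_c(\Gamma,B)$, is a valid self-contained substitute for that citation.
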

\section{Main results}

\subsection{The family of compact quantum metric spaces} \label{Lip-norm defn}
Let $(A,\rho,\Gamma)$ be a $C^{\ast}$-dynamical system where $A$ is a unital $C^{\ast}$-algebra, $\Gamma$ is a discrete group equipped with a normalized $2$-cocycle $\sigma : \Gamma \times \Gamma \rightarrow \mathbb T.$  Let $\lambda$ denote the growth function of $\Gamma$ with respect to some length function $\ell.$ The discrete group $\Gamma,$ which is considered in this context, is assumed to have either subexponential growth or exponential growth, i.e., there always exist some $0 < \alpha \leq 1$ and a constant $C > 0$ such that the growth function $\lambda (n) \leq e^{C n^{\alpha}}$ for all $n \geq 1.$ For $\beta, \gamma \in \mathbb R,$ we consider a two parameter family of seminorms $L_{\ell}^{\beta, \gamma}$ on the associated $\sigma$-twisted reduced crossed product $A_{\sigma} : = A \rtimes_{r, \rho, \sigma} \Gamma$ as follows $:$
$$L^{\beta, \gamma}_{\ell} (\mathfrak{a}) = \begin{cases} \left (\sum\limits_{t \in \Gamma \setminus \{e \}} e^{2 \gamma \ell (t)^{\beta}} \left \|a_t \right \|^{2} \right )^{\frac {1} {2}}, \quad \text{if}\ \mathfrak{a} = \sum\limits_{t \in \Gamma} \delta_t a_t \in C_c (\Gamma,  A, \sigma), \\ \infty, \quad \mathrm{otherwise}. \end{cases}$$
Given such a two parameter family of seminorms and a Lipschitz seminorm $L_A$ on the base algebra $A,$ we define a two parameter family of stratified seminorms $L_{\ell}^{S, \beta, \gamma}$ on $A \rtimes_{r, \rho, \sigma} \Gamma$ in the following way $:$
$$L^{S, \beta, \gamma}_{\ell} (\mathfrak{a}) = \begin{cases} \max \left \{L^{\beta, \gamma}_{\ell} (\mathfrak{a}), \sup\limits_{t \in \Gamma} L_A \left (a_t \right ) \right \}, \quad \text {if}\ \mathfrak{a}= \sum\limits_{t \in \Gamma} \delta_t a_t \in C_c (\Gamma, \mathcal A, \sigma), \\ \infty, \quad \mathrm{otherwise}, \end{cases}$$ where $\mathcal A$ stands for domain of $L_A.$ We call the CQMS $(A,L_A)$ the base CQMS. 
Throughout this paper, our {\bf standing assumption} on a $C^{\ast}$-dynamical system $(A,\rho,\Gamma)$ is that $A$ is a unital $C^{\ast}$-algebra equipped with a Lip-norm $L_A$ and $\Gamma$ is a countable discrete group with a length function $\ell$ such that its growth function satisfies $\lambda(n)\leq e^{Cn^{\alpha}}$ for some constants $C>0,0<\alpha\leq 1$. We say that the action $\rho$ is {\bf Lip-isometric} if $L_A(\rho_t(a))=L_A(a)$ for all $t\in\Gamma$ and all $a\in A.$ We denote the unit Lip-ball with respect to the Lip-norm $L_{\ell}^{S, \beta, \gamma}$ by $\mathcal L^{S, \beta, \gamma}_1,$ i.e.,
$$\mathcal L^{S, \beta, \gamma}_1 : = \left \{\mathfrak a \in A \rtimes_{r, \rho, \sigma} \Gamma\ :\ L_{\ell}^{S, \beta, \gamma} (\mathfrak {a}) \leq 1 \right \}.$$

\begin{lem}
Let $\Gamma$ be a countable discrete group acting on a unital $C^{\ast}$-algebra $A$ via $\rho.$ If $\rho$ is Lip-isometric and $L_A$ is a Lipschitz seminorm on $A,$ then $L_{\ell}^{S, \beta, \gamma}$ is a Lipschitz seminorm on $A_{\sigma} : = A \rtimes_{r, \rho, \sigma} \Gamma$ for any $\beta, \gamma \in \mathbb R.$
\end{lem}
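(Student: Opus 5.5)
We verify the three conditions in the definition of a Lipschitz seminorm for $L^{S,\beta,\gamma}_\ell$ in turn. Seminorm properties are immediate: $L^{\beta,\gamma}_\ell$ is a weighted $\ell^2$-norm of the coefficient family $(a_t)_{t\neq e}$, and $\sup_t L_A(a_t)$ is a supremum of seminorms. The maximum of two seminorms is a seminorm. This uses that the Fourier coefficients $a_t=E(\mathfrak a\,\delta_t^{*})$-type maps are linear and uniquely determined, because the conditional expectation onto $A$ is faithful on the reduced algebra.

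\emph{Density.} The domain contains $C_c(\Gamma,\mathcal A,\sigma)$, since a finite sum has finite weighted $\ell^2$ norm and finitely many finite $L_A$-values. We show this space is norm dense in $A_\sigma$ as follows.
\begin{itemize}
\item $C_c(\Gamma,A)$ is dense in $A_\sigma$.
\item $\|\delta_t a\|=\|a\|$ for each fibre.
\item Given $\sum\delta_t a_t$ with finitely many terms, approximate each $a_t$ by $b_t\in\mathcal A$, using density of $\mathrm{Dom}(L_A)$.
\end{itemize}
Then $\|\sum\delta_t(a_t-b_t)\|\le\sum\|a_t-b_t\|$ is small.

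\emph{Kernel.} If $L^{S,\beta,\gamma}_\ell(\mathfrak a)=0$, then $\mathfrak a\in C_c$ and $L^{\beta,\gamma}_\ell(\mathfrak a)=0$. Since the weights $e^{2\gamma\ell(t)^\beta}>0$, this forces $a_t=0$ for all $t\ne e$. Also $L_A(a_e)=0$, so $a_e\in\mathbb C1_A$, and hence $\mathfrak a=\delta_e\lambda 1$, which is a scalar multiple of the unit of $A_\sigma$ (the cocycle being normalized). Conversely, scalars have $L=0$ because $L_A(1)=0$ and all other coefficients vanish. One small point to check: $\ell(e)^\beta$ is excluded by summing over $t\neq e$, so $\beta<0$ with $\ell(e)=0$ causes no issue.

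\emph{Adjoint invariance.} This is the main computation. By the involution formula on the Fell bundle,
$$\mathfrak a^*=\sum_t \delta_{t^{-1}}\overline{\sigma(t^{-1},t)}\rho_t(a_t^*),$$
so the coefficient of $\mathfrak a^*$ at $s$ is $b_s=\overline{\sigma(s,s^{-1})}\rho_{s^{-1}}(a_{s^{-1}}^*)$. Since $|\sigma|=1$ and $\rho$ acts isometrically on norms, $\|b_s\|=\|a_{s^{-1}}\|$. Using $\ell(s^{-1})=\ell(s)$ (a length function is symmetric), reindexing gives $L^{\beta,\gamma}_\ell(\mathfrak a^*)=L^{\beta,\gamma}_\ell(\mathfrak a)$. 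For the second term,
$$L_A(b_s)=L_A\big(\rho_{s^{-1}}(a_{s^{-1}}^*)\big)=L_A(a^*_{s^{-1}})=L_A(a_{s^{-1}}),$$
using in turn the Lip-isometric hypothesis, adjoint invariance of $L_A$, and scaling by a unimodular scalar. So the suprema agree. We also note that $\mathcal A$ is $\ast$-closed and $\rho$-invariant for the same reasons, so $\mathfrak a\in C_c(\Gamma,\mathcal A,\sigma)$ iff $\mathfrak a^*$ is, and the value $\infty$ is matched as well.

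The only genuinely delicate point is the uniqueness of coefficients, which makes $L$ well defined. We settle it via the faithful conditional expectation $E:A_\sigma\to A$: $E(\mathfrak a)=a_e$, and $a_t$ is recovered from $E$ applied to $\mathfrak a$ multiplied by a suitable $\delta_{t^{-1}}$-element, up to unimodular cocycle factors.
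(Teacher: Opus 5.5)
Your proposal is correct and follows the same route as the paper. You check density of $C_c(\Gamma,\mathcal A,\sigma)$, identify the kernel using positivity of the weights and $\ker L_A=\mathbb C1_A$, and derive adjoint invariance from the involution formula together with $|\sigma|=1$, the Lip-isometry of $\rho$ and the adjoint invariance of $L_A$. You also make explicit several points the paper leaves implicit: the symmetry $\ell(t^{-1})=\ell(t)$ and norm-isometry of $\rho_t$ needed for $L^{\beta,\gamma}_\ell(\mathfrak a^*)=L^{\beta,\gamma}_\ell(\mathfrak a)$, the $\ast$-closedness and $\rho$-invariance of $\mathcal A$, and the well-definedness of the coefficients. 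For the last point, the recovery formula $a_t=E\big((\delta_t1)^*\mathfrak a\big)$ already suffices, without invoking faithfulness of $E$.
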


\begin{proof}
It is easy to see that $L_{\ell}^{S, \beta, \gamma}$ has one dimensional kernel $\mathbb C 1_A$ and that the domain $C_c \left (\Gamma, \mathcal A \right )$ of $L_{\ell}^{S, \beta, \gamma}$ is dense in $A_{\sigma}$. where $\mathcal A$ denotes the domain of $L_A.$ To show that $L_{\ell}^{S, \beta, \gamma}$ is adjoint invariant, we make use of Lip-isometry of the underlying action $\rho.$ Note that for any $\mathfrak a = \sum\limits_{t \in \Gamma} \delta_t a_t \in C_c (\Gamma, \mathcal A, \sigma),$ we have $\mathfrak {a}^{\ast} = \left (\sum\limits_{t \in \Gamma} \delta_t a_t \right )^{\ast} = \sum\limits_{t \in \Gamma} \delta_{t^{-1}} \rho_t \left (a_t^{\ast} \right ) \overline {\sigma (t, t^{-1})}.$ Since the $2$-cocycle $\sigma$ takes values in $\mathbb T,$ $L_{\ell}^{\beta, \gamma}$ is already adjoint invariant. Now using the fact that $L_A$ is adjoint invariant and $\rho$ is Lip-isometric, it is easy to see that $L_A\big(\rho_t \left (a_t^{\ast} \right ) \overline {\sigma (t, t^{-1})}\big)=L_A(a_t)$ for all $t$. Therefore, by definition, for $\mathfrak{a}=\sum\limits_{t\in \Gamma}\delta_t a_{t}\in C_{c}(\Gamma,\mathcal A,\sigma)$, 
\Bea
L^{S,\beta,\gamma}_{\ell}(\mathfrak{a}^{\ast})&=&\mathrm{max}\{L^{\beta,\gamma}_{\ell}(\mathfrak{a}^{*}), L_{A}(\rho_t \left (a_t^{\ast} \right ) \overline {\sigma (t, t^{-1})})\}\\
&=& \mathrm{max}\{L^{\beta,\gamma}_{\ell}(\mathfrak{a}), L_A(a_t)\}\\
&=& L^{S,\beta,\gamma}_{\ell}(\mathfrak{a}).
\Eea 
As for the elements where $L^{S,\beta,\gamma}_{\ell}$ takes the value infinity, the adjoint invariance follows from the fact that $C_{c}(\Gamma,\mathcal A,\sigma)$ is adjoint invariant.
\end{proof}

\begin{thm}
Let $(A,\rho,\Gamma)$ be a $C^{\ast}$-dynamical system satisfying the standard assumptions; $\Gamma$ admits a normalized $2$-cocycle $\sigma : \Gamma \times \Gamma \rightarrow \mathbb{T}$. If $\rho$ is $\mathrm{Lip}$-$\mathrm {isometric}$ then for all $\beta \geq \alpha$ and $\gamma > C,$ the associated reduced twisted crossed product $C^{\ast}$-algebra $A_{\sigma} : = A \rtimes_{r, \rho, \sigma} \Gamma$ has a CQMS structure with respect to the stratified Lip-norm $L^{S, \beta, \gamma}_{\ell}$ as defined in Subsection \ref{Lip-norm defn}.
\end{thm}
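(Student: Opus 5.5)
The plan is to verify the criterion of Theorem \ref{totbouiff}. By the preceding lemma, $L^{S,\beta,\gamma}_{\ell}$ is already a Lipschitz seminorm, so it remains to choose a state $\mu$ on $A_\sigma$ and show that $\{\mathfrak a: L^{S,\beta,\gamma}_{\ell}(\mathfrak a)\le 1,\ \mu(\mathfrak a)=0\}$ is norm totally bounded. The natural choice is $\mu=\nu\circ E$. Here $E: A_\sigma\to A$ is the canonical faithful conditional expectation $\sum_t\delta_t a_t\mapsto a_e$, and $\nu$ is any state on $A$. For $\mathfrak a=\sum_t\delta_t a_t$ in the set, the condition $\mu(\mathfrak a)=0$ means $\nu(a_e)=0$, and we also have $L_A(a_e)\le 1$. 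Since $L_A$ is a Lip-norm, Theorem \ref{totbouiff} applied to $(A,L_A)$ shows that the coefficients $a_e$ range over a norm totally bounded subset $\mathcal K_e\subset A$.

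Next we treat the off-diagonal coefficients. For $t\neq e$ we have $e^{\gamma\ell(t)^\beta}\|a_t\|\le 1$, so $\|a_t\|\le e^{-\gamma\ell(t)^\beta}$. Since $\|\delta_t a_t\|_{A_\sigma}=\|a_t\|$ (each $\delta_t$ is implemented by a unitary times the coefficient), the tail satisfies
\[
\Big\|\sum_{\ell(t)> N}\delta_t a_t\Big\|\le \sum_{\ell(t)>N}e^{-\gamma\ell(t)^\beta}\le \sum_{n>N}\lambda(n)e^{-\gamma n^\beta}\le\sum_{n>N}e^{Cn^\alpha-\gamma n^\beta}.
\]
This bound is uniform over the Lip-ball. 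Because $\beta\ge\alpha$ and $\gamma>C$, we get $Cn^\alpha-\gamma n^\beta\le -(\gamma-C)n^\alpha$ for $n\ge1$. The series $\sum_n e^{-(\gamma-C)n^\alpha}$ converges since $\alpha>0$, so the tail tends to $0$ uniformly as $N\to\infty$. Here we should be careful about what $\lambda(n)$ counts: it is either the sphere or the ball of radius $n$, and in either case it bounds the number of $t$ with $\ell(t)=n$ (or in $(n-1,n]$). If $\ell$ is real-valued, we group the $t$ into shells $n-1<\ell(t)\le n$, which only changes constants. Properness of $\ell$ makes every ball finite.

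The key remaining difficulty is the finitely many $t$ with $0<\ell(t)\le N$. For these we know $\|a_t\|\le1$ and $L_A(a_t)\le1$, but $a_t$ is not normalized by a state. We write $a_t=\nu(a_t)1+(a_t-\nu(a_t)1)$. The scalar $\nu(a_t)$ lies in the unit disc, which is compact. The element $a_t-\nu(a_t)1$ has $L_A$-value at most $1$ (since $L_A(1)=0$) and $\nu$-value $0$, so by Theorem \ref{totbouiff} it lies in a totally bounded set $\mathcal K\subset A$. Hence each $a_t$ lies in the totally bounded set $\mathbb D 1+\mathcal K$.

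To finish, given $\varepsilon>0$ we choose $N$ so that the tail is below $\varepsilon/2$. The truncation map $\mathfrak a\mapsto\sum_{\ell(t)\le N}\delta_ta_t$ sends the ball into the image of the totally bounded set $\mathcal K_e\times\prod_{0<\ell(t)\le N}(\mathbb D1+\mathcal K)$. This is a finite product, and it maps into $A_\sigma$ under the continuous map $(b_t)\mapsto\sum\delta_tb_t$, so its image is totally bounded and admits a finite $\varepsilon/2$-net. Combining the net with the tail estimate gives a finite $\varepsilon$-net for the whole set, and Theorem \ref{totbouiff} then shows that $L^{S,\beta,\gamma}_{\ell}$ is a Lip-norm.

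The step needing most care is the tail estimate, where the growth hypothesis must beat the weight. This is exactly where $\beta\ge\alpha$ and $\gamma>C$ enter, and the borderline case $\beta=\alpha$ requires the strict inequality $\gamma>C$.
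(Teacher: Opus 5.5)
Your proposal is correct and follows essentially the same route as the paper: the state $\psi\circ E$, the uniform tail bound from $\|a_t\|\le e^{-\gamma\ell(t)^\beta}$ together with the growth hypothesis, and the splitting $a_t=\psi(a_t)1_A+(a_t-\psi(a_t)1_A)$, which puts the finitely many truncated coefficients in a totally bounded set. Your shell argument with the weight $e^{-\gamma(n-1)^\beta}$ handles non-integer-valued $\ell$ correctly, whereas the paper's displayed bound $e^{-\gamma k^\beta}$ on the shell $k-1<\ell(t)\le k$ is valid as written only for integer-valued lengths.
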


\begin{proof} Let $\varepsilon > 0.$ First we get hold of some threshold $N \in \mathbb N$ such that for all $n \geq N$ and for all $\mathfrak{a} = \sum\limits_{t \in \Gamma} \delta_t a_t \in C_c (\Gamma, \mathcal A)$ with $L_{\ell}^{S, \beta, \gamma} (\mathfrak{a}) \leq 1,$ we have 
$$\left \|\sum\limits_{\ell (t) \geq n + 1} \delta_t a_t \right \|_{\mathrm {red}} < \frac {\varepsilon} {2}.$$
First note that if $L_{\ell}^{S, \beta, \gamma} (\mathfrak{a}) \leq 1,$ then for all $t \in \Gamma \setminus \{e\},$ we have 
$$\left \|a_t \right \| \leq e^{- \gamma \ell (t)^{\beta}}.$$
Now fix some $n \geq 1.$ Then we have
\Bea
\left \|\sum\limits_{\ell (t) \geq n + 1} \delta_t a_t \right \|_{\mathrm {red}} & \leq & \sum\limits_{\ell (t) \geq n + 1} \left \|a_t \right \| \\ 
& \leq & \sum\limits_{\ell (t) \geq n + 1} e^{- \gamma \ell (t)^{\beta}} \\ 
& \leq & \sum\limits_{k = n + 1}^{\infty} \sum\limits_{k - 1 < \ell (t) \leq k} e^{- \gamma k^{\beta}} \\ 
& \leq & \sum\limits_{k = n + 1}^{\infty} e^{- \gamma k^{\beta}} e^{C k^{\alpha}}.
\Eea
By Lemma \ref{threshold}, it then follows that the right hand side of the above inequality is less than $\frac {\varepsilon} {2}$ for sufficiently large $n.$ Let $N$ be this threshold.

\vspace{2mm}

Now fix some state $\psi$ of $A,$ and define a state $\tau$ on the reduced crossed product $A_{\sigma} = A \rtimes_{r, \rho, \sigma} \Gamma$ by $\tau(\mathfrak{a}) = \psi(E(\mathfrak{a})),$ where $E : A_{\sigma} \rightarrow A$ is the canonical conditional expectation. Thus, for $\mathfrak{a} = \sum\limits_{t \in \Gamma} \delta_t a_t,$ we have $\tau(\mathfrak{a}) = \psi (a_e).$

To establish the CQMS structure on $A_{\sigma},$ we must show that the set 
$$\widetilde{\mathcal{L}}_1^{S, \beta, \gamma} := \left \{\mathfrak{a} \in A_{\sigma}\ :\ L_{\ell}^{S, \beta, \gamma} (\mathfrak{a}) \leq 1 \ \mathrm{and}\ \tau(\mathfrak{a}) = 0 \right \}$$ 
is totally bounded with respect to the reduced norm. Consequently, it is enough to show that the set of finite truncations 
$$S := \left \{\mathfrak{a}=\sum\limits_{\ell (t) \leq N} \delta_t a_t\ :\ \mathfrak{a} \in \widetilde{\mathcal{L}}_1^{S, \beta, \gamma} \right \}$$ 
forms a totally bounded set. Note that for $\mathfrak{a} \in \widetilde{\mathcal{L}}_1^{S, \beta, \gamma},$ we automatically have $\psi(a_e) = \tau(\mathfrak{a}) = 0.$
Since $(A, L_A)$ is a CQMS, the set
$$\mathcal L_1' : = \left \{a \in A\ :\ L_A (a) \leq 1\ \mathrm {and}\ \psi (a) = 0 \right \}$$ 
is totally bounded in $A$ by Theorem \ref{totbouiff}. So we can get hold of $c_1, \cdots, c_p \in \mathcal L_1'$ such that 
$$\mathcal L_1' \subseteq \bigcup\limits_{i = 1}^{p} B \left (c_i, \frac {\varepsilon} {4 \lambda_N} \right ),$$ 
where $\lambda_N : = \left \lvert \left \{t \in \Gamma\ :\ \ell (t) \leq N \right \} \right \rvert.$

Furthermore, for $t \neq e,$ the condition $L_{\ell}^{S, \beta, \gamma}(\mathfrak{a}) \leq 1$ implies $\|a_t\| \leq e^{-\gamma \ell(t)^\beta} \leq 1.$ Therefore, the scalars $\psi (a_t)$ lie in the closed unit disk $D := \{z \in \mathbb{C} \ : \ |z| \leq 1\}.$ Since $D$ is compact, there exists a finite set $Z = \{z_1, \dots, z_m\} \subseteq D$ such that
$$D \subseteq \bigcup_{j=1}^m B\left(z_j, \frac{\varepsilon}{4 \lambda_N}\right).$$
Now, let $\mathfrak{a} \in \widetilde{\mathcal{L}}_1^{S, \beta, \gamma}.$ We construct an approximation for each $a_t$ with $\ell(t) \leq N.$ 
For $t = e,$ we have $L_A(a_e) \leq 1$ and $\psi (a_e) = 0.$ Thus $a_e \in \mathcal{L}_1',$ and we can choose $b_e \in \{c_1, \dots, c_p\}$ such that
$$\left \|a_e - b_e \right \| < \frac{\varepsilon}{4 \lambda_N}.$$
For $g \neq e,$ we have $L_A(a_t) \leq 1.$ Because $L_A$ vanishes on scalars, we have $L_A(a_t - \psi(a_t)1_A) = L_A(a_t) \leq 1,$ and clearly $\psi (a_t - \psi(a_t)1_A) = 0.$ Thus $a_t - \psi(a_t)1_A \in \mathcal{L}_1',$ and we can choose $b_t \in \{c_1, \cdots, c_p\}$ such that
$$\left \|a_t - \psi (a_t) 1_A - b_t \right \| < \frac {\varepsilon} {4 \lambda_N}.$$
Simultaneously, since $\psi(a_t) \in D,$ we can choose $w_t \in Z$ such that 
$$\left |\psi (a_t) - w_t \right | < \frac{\varepsilon}{4 \lambda_N}.$$
Define $y_t = b_t + w_t 1_A$ for $t \neq e,$ and $y_e = b_e.$ Notice that each $y_t$ belongs to the finite set
$$F := \{ c_i + z_j 1_A \ : \ 1 \leq i \leq p, 1 \leq j \leq m \} \cup \{c_1, \dots, c_p\}.$$ 
We then have, for $t \neq e$
\Bea
\left \| a_t - y_t \right \| & = & \left \| (a_t - \psi (a_t)1_A - b_g) + (\psi (a_t) - w_t)1_A \right \| \\
& \leq & \left \| a_t - \psi (a_t)1_A - b_t \right \| + \left | \psi (a_t) - w_t \right | \\
& < & \frac{\varepsilon}{4 \lambda_N} + \frac{\varepsilon}{4 \lambda_N} \\ & = & \frac{\varepsilon}{2 \lambda_N}.
\Eea
For $t = e,$ we already established $\|a_t - y_t\| < \frac{\varepsilon}{4 \lambda_N} < \frac {\varepsilon} {2 \lambda_N}.$
So we have
\Bea
\left \|\sum\limits_{\ell (t) \leq N} \delta_t a_t - \sum\limits_{\ell (t) \leq N} \delta_t y_t \right \|_{\mathrm{red}} & \leq & \sum\limits_{\ell (t) \leq N} \left \|a_t - y_t \right \| \\
& < & \sum\limits_{\ell (t) \leq N} \frac{\varepsilon}{2 \lambda_N} \\ & = & \frac{\varepsilon}{2}.
\Eea
This shows that the set $S$ is totally bounded with respect to the reduced norm, as required.
\end{proof}
In this paper all the $2$-cocycles on $\Gamma$ are assumed to be normalized unless mentioned otherwise. The following lemma is going to be useful later.
\begin{lem}\label{estimate1}
Let $(A,\rho,\Gamma)$ be a $C^{\ast}$-dynamical system satisfying the standard assumption; $\sigma$ be a $2$-cocycle on $\Gamma$. Consider the associated reduced $\sigma$-twisted crossed product $C^{\ast}$-algebra $A_{\sigma}$. Then for a fixed $\beta \geq \alpha$ and for any $\varepsilon > 0,$ there exists $\gamma_0 > C$ such that for all $\gamma \geq \gamma_0$ and all $\mathfrak{a} \in A_{\sigma}$ satisfying $L_{\ell}^{S, \beta, \gamma} (\mathfrak{a}) \leq 1,$ we have $\|\mathfrak{a} - E (\mathfrak{a})\| < \varepsilon,$ where $E : A_{\sigma} \rightarrow A$ is the canonical conditional expectation. The choice of $\gamma_0$ does not depend on the choice of the cocycle $\sigma$.
\end{lem}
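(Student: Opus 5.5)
Since the Lip-ball condition forces $L_\ell^{S,\beta,\gamma}(\mathfrak a)<\infty$, every $\mathfrak a$ under consideration lies in $C_c(\Gamma,\mathcal A,\sigma)$. Writing $\mathfrak a=\sum_{t\in\Gamma}\delta_t a_t$, we have $E(\mathfrak a)=\delta_e a_e$. The plan is therefore to bound $\mathfrak a-E(\mathfrak a)=\sum_{t\neq e}\delta_t a_t$ by the triangle inequality in the reduced norm, exactly as in the tail estimate of the preceding theorem:
$$\Big\|\sum_{t\neq e}\delta_t a_t\Big\|_{\mathrm{red}}\leq \sum_{t\neq e}\|a_t\|.$$
This step uses only that each $\delta_t a_t$ has reduced norm $\|a_t\|$, which holds for every unitary cocycle. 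This is the reason $\gamma_0$ will not depend on $\sigma$. From $L^{\beta,\gamma}_\ell(\mathfrak a)\leq 1$ we get $\|a_t\|\leq e^{-\gamma\ell(t)^\beta}$ for every $t\neq e$.

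Next I group the sum by length shells, as before. I assume, as is implicit in the paper, that the length function is integer-valued or at least satisfies $\ell(t)\geq \ell_0>0$ for $t\neq e$; this is needed to keep the first shell away from $e$. With $k=\lceil \ell(t)\rceil\geq 1$ and the growth bound $\lambda(k)\leq e^{Ck^\alpha}$, we obtain
$$\sum_{t\neq e}\|a_t\|\leq \sum_{k=1}^{\infty}e^{Ck^{\alpha}}e^{-\gamma (k-1)^{\beta}}.$$
If $\ell$ is integer-valued, the exponent $(k-1)^\beta$ can be replaced by $k^\beta$. I would state the argument in that setting and handle the general case by the same shell decomposition.

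It then remains to show that $F(\gamma):=\sum_{k\geq1}e^{Ck^\alpha-\gamma k^\beta}\to 0$ as $\gamma\to\infty$. I expect this to be the only real point. Since $\beta\geq\alpha$ and $k\geq1$, we have $k^\beta\geq k^\alpha$. Fix $\gamma_1>C$. For $\gamma\geq\gamma_1$ each term is dominated by $e^{(C-\gamma_1)k^\alpha}$, which is summable by the same reasoning as Lemma \ref{threshold} (or by comparison with $e^{-ck^\alpha}$, $c>0$). Each term also tends to $0$ monotonically as $\gamma\to\infty$. Dominated convergence, or directly splitting off a finite head and an $\varepsilon/2$-tail, then gives $F(\gamma)\downarrow 0$.

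Alternatively, one can get an explicit bound: $e^{-\gamma k^\beta}\leq e^{-(\gamma-\gamma_1)}e^{-\gamma_1 k^\beta}$ for $k\geq1$, so $F(\gamma)\leq e^{-(\gamma-\gamma_1)}F(\gamma_1)$. Choosing $\gamma_0\geq\gamma_1$ with $e^{-(\gamma_0-\gamma_1)}F(\gamma_1)<\varepsilon$ finishes the proof. This $\gamma_0$ depends only on $C$, $\alpha$, $\beta$ and $\varepsilon$, and not on $\sigma$ or $\mathfrak a$.
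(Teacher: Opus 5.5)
Your proof is correct and has the same skeleton as the paper's. Both use $\|\mathfrak a-E(\mathfrak a)\|\le\sum_{t\neq e}\|a_t\|$, which is the only place the cocycle enters and so gives the $\sigma$-independence. Both then use $\|a_t\|\le e^{-\gamma\ell(t)^\beta}$ and a shell decomposition controlled by $\lambda(k)\le e^{Ck^\alpha}$.

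The difference is in how the limit $\gamma\to\infty$ is taken:
\begin{itemize}
\item The paper restricts to $\gamma>2^\alpha C$ and isolates the first shell via $\delta=\min\{\ell(t):0<\ell(t)\le 1\}$. Using $(n-1)^\beta\ge(n-1)^\alpha\ge 2^{-\alpha}n^\alpha$, it dominates the remaining shells by $\sum_{n\ge 2}e^{-\gamma' n^\alpha}$ with $\gamma'=\gamma/2^\alpha-C$. It then shows this sum tends to $0$ by an explicit integral estimate.
\item You need only summability at a single $\gamma_1>C$, followed by either dominated convergence or the factorization $F(\gamma)\le e^{-(\gamma-\gamma_1)}F(\gamma_1)$.
\end{itemize}
Your route is shorter, avoids the calculus, and yields an explicit exponential rate in $\gamma$ and an explicit $\gamma_0$. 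It also sidesteps the paper's step $e^{-\gamma\delta^\beta}\le e^{-\gamma\delta^\alpha}$. That step actually goes the wrong way when $\delta<1$ and $\beta>\alpha$, though harmlessly, since $e^{-\gamma\delta^\beta+C}\to 0$ anyway.

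One step should be tightened. For a real-valued $\ell$, your displayed bound $\sum_{k\ge 1}e^{Ck^\alpha}e^{-\gamma(k-1)^\beta}$ does not tend to $0$. Its $k=1$ term equals $e^{C}$ for every $\gamma$, so appealing to ``the same shell decomposition'' is not enough; the first shell must be handled separately, as the paper does with $\delta$.

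Also, $\ell_0:=\min\{\ell(t):t\neq e\}>0$ is not an extra hypothesis. It follows from the finiteness of balls (since $\lambda(n)<\infty$) and the convention that $\ell(t)=0$ only for $t=e$. It plays exactly the role of the paper's $\delta$.

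Your own factorization then settles the general case in one line. For $t\neq e$ and $\gamma\ge\gamma_1$ one has $\gamma\ell(t)^\beta\ge(\gamma-\gamma_1)\ell_0^\beta+\gamma_1\ell(t)^\beta$. Hence $\sum_{t\neq e}\|a_t\|\le e^{-(\gamma-\gamma_1)\ell_0^\beta}\sum_{t\neq e}e^{-\gamma_1\ell(t)^\beta}$. The last sum is at most $\sum_{n\ge 1}e^{-\gamma_1(n-1)^\beta+Cn^\alpha}$ by the shell bound, and this is finite for $\gamma_1>C$ by Lemma \ref{finiteness}.
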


\begin{proof}
Let $\delta : = \min \left \{\ell (t)\ : 0 < \ell (t) \leq 1 \right \}.$ Fix some $\beta \geq \alpha.$ Then for $\gamma > 2^{\alpha} C$ and $\mathfrak{a} = \sum\limits_{t \in \Gamma} \delta_t a_t \in A_{\sigma}$ with $L_{\ell}^{S, \beta, \gamma} (\mathfrak{a}) \leq 1,$ we have
\Bea
\|\mathfrak{a} - E (\mathfrak{a})\| & = & \left \|\sum\limits_{t \in \Gamma \setminus \{e\}} \delta_t a_t \right \| \\ & \leq & \sum\limits_{\ell (t) > 0} \left \|a_t \right \| \\ & \leq & \sum\limits_{\ell (t) > 0} e^{-\gamma \ell (t)^{\beta}} \\ & = & \sum\limits_{0 < \ell (t) \leq 1} e^{-\gamma \ell (t)^{\beta}} + \sum\limits_{n = 2}^{\infty} \sum\limits_{n - 1 < \ell (t) \leq n} e^{-\gamma (n - 1)^{\beta}} \\ & \leq & e^{-\gamma \delta^{\beta}} e^{C} + \sum\limits_{n = 2}^{\infty} e^{-\gamma (n - 1)^{\beta} + C n^{\alpha}} \\ & \leq & e^{-\gamma \delta^{\alpha} + C} + \sum\limits_{n = 2}^{\infty} e^{-\gamma (n - 1)^{\alpha} + C n^{\alpha}} \\ & \leq & e^{-\gamma \delta^{\alpha} + C} + \sum\limits_{n = 2}^{\infty} e^{-\left (\frac {\gamma} {2^{\alpha}} - C \right ) n^{\alpha}} \\ & = & e^{-\gamma \delta^{\alpha} + C} + \sum\limits_{n = 2}^{\infty} e^{-\gamma' n^{\alpha}}, 
\Eea
where $\gamma' : = \frac {\gamma} {2^{\alpha}} - C > 0.$ Note that $e^{-\gamma \delta^{\alpha} + C} \xrightarrow{\gamma \rightarrow \infty} 0.$ So in order to show the desired conclusion, it is enough to show that $$\sum\limits_{n = 2}^{\infty} e^{-\gamma' n^{\alpha}} \xrightarrow{\gamma' \rightarrow \infty} 0.$$
Since $x \mapsto e^{-\gamma' x^{\alpha}}$ is decreasing on $[0, \infty),$ it follows that
$$\sum\limits_{n = 2}^{\infty} e^{-\gamma' n^{\alpha}} \leq \int_{1}^{\infty} e^{-\gamma' x^{\alpha}}\ dx.$$ 
If $\alpha \geq 1,$ then 
$$\int_{1}^{\infty} e^{-\gamma' x^{\alpha}}\ dx \leq \int_{1}^{\infty} e^{-\gamma' x}\ dx = \frac {e^{-\gamma'}} {\gamma'} \xrightarrow{\gamma' \rightarrow \infty} 0.$$
If $0 < \alpha < 1,$ we apply the substitution $u = x^{\alpha}$, which gives $dx = \frac{1}{\alpha} u^{\frac{1-\alpha}{\alpha}} \, du.$ Then we have
\Bea
\int_{1}^{\infty} e^{-\gamma' x^{\alpha}}\ dx & = & \frac{1}{\alpha} \int_{1}^{\infty} u^{\frac{1-\alpha}{\alpha}} e^{-\gamma' u} \ du \\ & = & \frac{1}{\alpha} \int_{1}^{\infty} \left( u^{\frac{1-\alpha}{\alpha}} e^{-\frac{\gamma'}{2} u} \right) e^{-\frac{\gamma'}{2} u} \ du.
\Eea
Let $p = \frac{1-\alpha}{\alpha} > 0.$ We would like to have the global maximum of the function $g(u) = u^p e^{-\frac{\gamma'}{2} u}$ on $[0, \infty).$ It is easy to see that $g$ is differentiable and the equation $g' (u) = 0$ yields a unique critical point at $u = \frac{2p}{\gamma'}.$ Since $g$ is increasing on $\left [0, \frac {2 p} {\gamma'} \right ]$ and decraesing on $\left [\frac {2 p} {\gamma'}, \infty \right ),$ it follows that $g$ has the global maximum at $u = \frac {2 p} {\gamma'}.$  Evaluating $g$ at this maximum gives
\begin{equation*}
    M = g\left(\frac{2p}{\gamma'}\right) = \left( \frac{2(1-\gamma')}{e \gamma' \alpha} \right)^{\frac{1-\alpha}{\alpha}}.
\end{equation*}
Since $g(u) \leq M$ for all $u \geq 0,$ it follows that
$$\int_{1}^{\infty} \left( u^{\frac{1-\alpha}{\alpha}} e^{-\frac{\gamma'}{2} u} \right) e^{-\frac{\gamma'}{2} u} \ du \leq M \int_{1}^{\infty} e^{-\frac{\gamma'}{2} u} \ du = \frac {2M} {\gamma'} e^{-\frac {\gamma'} {2}}.$$
Thus, 
$$\int_{1}^{\infty} e^{-\gamma' x^{\alpha}}\ dx \leq \frac {2 M} {\alpha \gamma'} e^{- \frac {\gamma'} {2}} \xrightarrow{\gamma' \rightarrow \infty} 0.$$
This shows that for any $\alpha > 0,$ $$\sum\limits_{n = 2}^{\infty} e^{-\gamma' n^{\alpha}} \xrightarrow{\gamma' \rightarrow \infty} 0,$$ as required.
\end{proof}
So we have a two parameter family of Lip-norms on the operator system $A\rtimes_{r,\rho,\sigma}\Gamma$. In the next theorem we show that as the parameter $\gamma\rightarrow\infty$, for a fixed $\beta$ and a cocycle $\sigma$, the corresponding family of CQMS converges to the base CQMS $(A,L_A)$.

\begin{thm} \label{convergence qGH}
Let $\Gamma, A, L_A, \beta, \gamma, \sigma$ be as above. Then \begin{displaymath}\mathrm {dist}_{\mathrm {qGH}} \left (\left (A_{\sigma}, L_{\ell}^{S, \beta, \gamma} \right ), \left (A, L_A \right ) \right ) \xrightarrow {\gamma \rightarrow \infty} 0.\end{displaymath}
\end{thm}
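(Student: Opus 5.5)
The plan is to build an explicit admissible Lip-norm on $A_{\sigma}\oplus A$ by Rieffel's bridge construction, and to bound the Hausdorff distance between the two state spaces by the quantity controlled in Lemma \ref{estimate1}.

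Fix $\varepsilon>0$. By Lemma \ref{estimate1}, choose $\gamma_0>C$ such that for $\gamma\ge\gamma_0$ every $\mathfrak a$ with $L^{S,\beta,\gamma}_\ell(\mathfrak a)\le 1$ satisfies $\|\mathfrak a-E(\mathfrak a)\|<\varepsilon$. On $A_\sigma\oplus A$ define
$$M(\mathfrak a,b):=\max\Big\{L^{S,\beta,\gamma}_\ell(\mathfrak a),\ L_A(b),\ \varepsilon^{-1}\|E(\mathfrak a)-b\|\Big\},$$
with $\mathrm{Dom}(M)=C_c(\Gamma,\mathcal A,\sigma)\oplus\mathcal A$. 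The bridge term uses the norm $\|E(\mathfrak a)-b\|$, so $M$ is adjoint invariant, because $E$ is $\ast$-preserving. Its kernel is $\mathbb C(1,1)$.

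To see that $M$ is a Lip-norm, I would apply Theorem \ref{totbouiff} with the state $(\mathfrak a,b)\mapsto\tau(\mathfrak a)$. On the set where $M\le1$ and $\tau(\mathfrak a)=0$:
\begin{itemize}
\item the coordinate $\mathfrak a$ lies in a totally bounded set, by the previous theorem;
\item the coordinate $b$ lies within distance $\varepsilon$ of $E(\mathfrak a)$, and $E$ is contractive, so $b$ also lies in a totally bounded set.
\end{itemize}

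Admissibility means the quotient seminorms of $M_{\mathrm{sa}}$ recover $L_{\mathrm{sa}}$ and $(L_A)_{\mathrm{sa}}$. This requires two extension steps.
\begin{itemize}
\item \emph{Extending $\mathfrak a$.} Given self-adjoint $\mathfrak a$ with $L^{S,\beta,\gamma}_\ell(\mathfrak a)\le1$, take $b=E(\mathfrak a)=a_e$. Then $L_A(a_e)\le\sup_t L_A(a_t)\le1$ and the bridge term vanishes. So $M(\mathfrak a,a_e)\le1$, and this part is easy.
\item \emph{Extending $b$.} Given self-adjoint $b$ with $L_A(b)\le1$, take $\mathfrak a=\delta_e b$. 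Then $L^{S,\beta,\gamma}_\ell(\delta_e b)=\max\{0,L_A(b)\}\le1$, since the weighted sum omits $t=e$. The bridge term is again zero. This is also easy.
\end{itemize}
Quotient seminorms are therefore at most the originals, and they are at least the originals because $M$ dominates both. Hence $M$ is admissible for every $\varepsilon$.

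It remains to bound the Hausdorff distance.
\begin{itemize}
\item For $\mu\in S(A_\sigma)$, pair it with $\nu=\mu\circ\delta_e\in S(A)$; this is a state because $a\mapsto\delta_e a$ is a unital $\ast$-embedding. For $M(\mathfrak a,b)\le1$,
$$|\mu(\mathfrak a)-\nu(b)|\le|\mu(\mathfrak a-E(\mathfrak a))|+|\mu(E(\mathfrak a))-\mu(\delta_e b)|\le\varepsilon+\varepsilon.$$
\item For $\nu\in S(A)$, pair it with $\mu=\nu\circ E$; the same estimate holds.
\end{itemize}
So the Hausdorff distance is at most $2\varepsilon$ for all $\gamma\ge\gamma_0$, which gives the theorem.

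The main obstacle is checking that the supremum defining $d_M$ is taken correctly. One must ensure $\|\mathfrak a-E(\mathfrak a)\|<\varepsilon$ uses only $L^{S,\beta,\gamma}_\ell(\mathfrak a)\le1$, which Lemma \ref{estimate1} provides. The bridge constant $\varepsilon^{-1}$ must also be balanced against admissibility; here admissibility holds exactly, since the bridge term vanishes on the chosen extensions.
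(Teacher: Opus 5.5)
Your proposal is essentially the paper's proof. It uses the same bridge $\max\{L^{S,\beta,\gamma}_\ell(\mathfrak a),L_A(b),c\,\|E(\mathfrak a)-b\|\}$ (the paper takes $c=2/\varepsilon$), the same extensions $b=E(\mathfrak a)$ and $\mathfrak a=\delta_e b$ for admissibility, and the same pairing of states (you use the explicit extension $\nu\circ E$ where the paper takes a Hahn--Banach extension), finishing with Lemma~\ref{estimate1}.

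There is one slip in your extra check that $M$ is a Lip-norm, a step the paper skips. Being within $\varepsilon$ of a totally bounded set does not by itself give total boundedness, so you must also use $L_A(b)\le 1$. For example, write $b=(b-\psi(b)1)+\psi(b)1$. Here $b-\psi(b)1$ lies in the totally bounded set $\{L_A\le 1,\ \psi=0\}$, and $|\psi(b)|\le\|b\|\le\|\mathfrak a\|+\varepsilon$ is uniformly bounded.
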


\begin{proof}
Consider the Lip-norm $L_{\varepsilon}^{\gamma}$ on $A_{\sigma} \oplus A$ defined by 
$$L_{\varepsilon}^{\gamma} (\mathfrak {b}, a) : = \max \left \{L_{\ell}^{S, \beta, \gamma} (\mathfrak {b}), L_A (a), \frac {2} {\varepsilon} \left \|E (\mathfrak {b}) - a \right \| \right \},$$
where $E : A_{\sigma} \rightarrow A$ is the canonical conditional expectation. We claim that $L_{\varepsilon}^{\gamma}$ is admissible. In order to show that $(L_{\varepsilon}^{\gamma})_{\mathrm{sa}}$ induces $(L_{\ell}^{S, \beta, \gamma})_{\mathrm{sa}},$ we take an arbitrary $\mathfrak {b} \in \big(\mathrm{Dom}(L^{S,\beta,\gamma}_{\ell})\big)_{\mathrm{sa}}$ and set $a = E (\mathfrak {b}).$ Let $\mathfrak {b} = \sum\limits_{t \in \Gamma} \delta_t b_t$ for some $b_{t}\in\mathcal A$. Then $a = E (\mathfrak {b}) = b_e.$ Note that since $E$ is adjoint invariant $a \in \mathrm{Dom}(L_A)_\mathrm{sa}.$ Therefore,
\Bea
L_A (a) & = & L_A (E (\mathfrak {b})) \\ & = & L_A \left (b_e \right ) \\ & \leq & \sup\limits_{g \in \Gamma} L_A \left (b_g \right ) \\ & \leq & \max \left \{L_{\ell}^{\beta, \gamma} (\mathfrak {b}), \sup\limits_{g \in \Gamma} L_A \left (b_g \right ) \right \} \\ & = & L_{\ell}^{S, \beta, \gamma} (\mathfrak {b}).
\Eea
Also, $$\frac {2} {\varepsilon} \left \|E (\mathfrak {b}) - a \right \| = \frac {2} {\varepsilon} \left \|E (\mathfrak {b}) - E (\mathfrak {b}) \right \| = 0 \leq L_{\ell}^{S, \beta, \gamma} (\mathfrak {b}).$$
This shows that $\big(L_{\varepsilon}^{\gamma}\big)_{\mathrm{sa}}$ induces $\big(L_{\ell}^{S, \beta, \gamma}\big)_{\mathrm{sa}}.$ Next we show that $\big(L_{\varepsilon}^{\gamma}\big)_{\mathrm{sa}}$ induces $(L_A)_{\mathrm{sa}}.$
For that, we take any $a \in \mathcal {A}_{\mathrm {sa}}$ and set $\mathfrak {b} = \delta_e a \in  \mathrm{Dom}(L^{S,\beta,\gamma}_{\ell})_{\mathrm {sa}}.$ Then 
$$\frac {2} {\varepsilon} \left \|E (\mathfrak {b}) - a \right \| = \frac {2} {\varepsilon} \left \|a - a \right \| = 0 \leq L_A (a),$$ and,
\Bea
L_{\ell}^{S, \beta, \gamma} (\mathfrak {b}) & = & L_{\ell}^{S, \beta, \gamma} \left (\delta_e a \right ) \\ & = & \max \left \{L_{\ell}^{\beta, \gamma} \left (\delta_e a \right ), L_A (a) \right \} \\ & = & L_A (a).
\Eea
Thus $(L_{\varepsilon}^{\gamma})_{\mathrm{sa}}$ also induces $(L_A)_{\mathrm{sa}}.$

Now for $\mu \in S (A_{\sigma}),$ define $\nu_{\mu} \in S (A)$ by $\nu_{\mu} = \mu \rvert_A.$ Then for $(\mathfrak {b}, a) \in A_{\sigma} \oplus A$ with $L_{\varepsilon}^{\gamma} (\mathfrak {b}, a) \leq 1$ we have
\bea\label{3.2}
\left \lvert \mu (\mathfrak {b}) - \nu_{\mu} (a) \right \rvert & = & \left \lvert \mu (\mathfrak {b}) - \mu (a) \right \rvert \nonumber \\ & = & \left \lvert \mu (\mathfrak {b} - E (\mathfrak {b})) + \mu (E (\mathfrak {b})) - \mu (a) \right \rvert \nonumber \\ & \leq & \left \vert \mu (\mathfrak {b} - E (\mathfrak {b}))  \right \rvert + \left \lvert \mu (E (\mathfrak {b}) - a) \right \rvert \nonumber \\ & \leq & \|\mathfrak {b} - E (\mathfrak {b})\| + \|E (\mathfrak {b}) - a \| \nonumber \\ & \leq & \|\mathfrak {b} - E (\mathfrak {b})\| + \frac {\varepsilon} {2}. 
\eea
Similarly, for $\nu \in S (A),$ setting $\mu_{\nu} \in S \left (A_{\sigma} \right )$ to be a Hahn-Banach extension of $\nu$ to $A$ and for all $(\mathfrak {b}, a) \in B \oplus A$ with $L_{\varepsilon}^{\gamma} (\mathfrak {b} , a) \leq 1,$ we have
\begin{equation}\label{3.3} \left \lvert \mu_{\nu} (\mathfrak {b}) - \nu (a) \right \rvert \leq \|\mathfrak {b} - E (\mathfrak {b})\| + \frac {\varepsilon} {2}. \end{equation}
As $L^{\gamma}_{\varepsilon}(\mathfrak{b},a)\leq 1$ implies $L^{S,\beta,\gamma}_{\ell}(\mathfrak{b})\leq 1$,  by Lemma \ref{estimate1}, there is $\gamma_0$ such that for all $\gamma\geq\gamma_0$ and $L^{\gamma}_{\varepsilon}(\mathfrak{b},a)\leq 1$, $\lvert\lvert E(\mathfrak{b})-\mathfrak{b}\rvert\rvert<\frac{\varepsilon}{2}$. Therefore, by Equations \ref{3.2} and \ref{3.3}, it follows that $$\mathrm{dist}_{\mathrm {GH}}^{L_{\varepsilon}^{\gamma}} \left (S \left (A_{\sigma} \right ), S (A) \right ) < \varepsilon,$$ for all $\gamma\geq\gamma_0$ and consequently, $$\mathrm{dist}_{\mathrm{qGH}} \left (\left (A_{\sigma}, L_{\ell}^{S, \beta, \gamma} \right ), \left (A, L_A \right ) \right ) \xrightarrow{\gamma \rightarrow \infty} 0.$$

\end{proof}


Now we are going to introduce a third parameter for our CQMS family. The parameters $\beta,\gamma$ parametrize Lip-norms. The third parameter would parametrize the underlying operator systems. To that end, let $(A,\rho,\Gamma)$ be a $C^{\ast}$-dynamical satisfying the standard assumptions such that $\rho$ is Lip-isometric and let $\Gamma$ admit a strongly continuous one-parameter family $\{\sigma_{\theta}\}_{\theta\in\Omega}$ of $2$-cocycles where $\Omega$ is some compact parameter space. Then by  Theorem \ref{upper semicontinuity}, $\{A\rtimes_{r,\rho,\sigma_{\theta}}\Gamma\}_{\theta\in\Omega}$ is a continuous family of $C^{\ast}$-algebras provided $\Gamma$ is exact. For the rest of this section the group $\Gamma$ is assumed to be {\bf exact}. Then we have a three parameter family of CQMS given by $\{\big(A\rtimes_{r,\rho,\sigma_{\theta}}\Gamma,L^{S,\beta,\gamma}_{\ell}\big)\}_{\theta,\beta,\gamma}$ where $\beta\ge \alpha$, $\gamma>C$ and $\theta\in\Omega$. We shall prove that this family is jointly continuous in the parameters with respect to the quantum Gromov-Hausdorff distance. We shall adapt the techniques used by M. Rieffel used in \cite{Rieffel-Gromov}*{Theorem 9.2} to prove continuity of quantum tori with respect to the deformation parameter. So the first thing we seek for is a suitable finite dimensional approximation. To that end, fix a point $(\beta_0,\gamma_0,\theta_0)$ in the parameter space $[\alpha,+\infty)\times(C,+\infty)\times\Omega$. For any positive real number $x$, let us define for a fixed $\theta\in \Omega$,
\begin{displaymath}
    A_{\theta}^{x} : = \left \{\sum\limits_{\ell (t) \leq x} \delta_t a_t\ :\ a_t \in A \right \} \subseteq C_c \left (\Gamma, A, \sigma_{\theta} \right ).
\end{displaymath}  
Then it is easy to see that $A_{\theta}^{x}$ is a closed and hence complete operator system and consequently $(A_{\theta}^{x},L^{S,\beta,\gamma}_{\ell}\big\rvert_{A^{x}_{\theta}})$ is again a CQMS. For brevity, from now on, we shall denote the $C^{\ast}$-algebra $A\rtimes_{r,\rho,\sigma_{\theta}}\Gamma$ by $A_{\theta}$.

\begin{thm} \label{tail threshold}
Let $\eta=\frac{\gamma_0-C}{2}$. Retaining all the notations introduced earlier, for an $\varepsilon>0$, for all $\beta\ge\alpha$, $\gamma\ge\gamma_0 - \eta$ and all $\theta\in\Omega$, there is an $R>0$ such that
\begin{displaymath}
 \mathrm {dist}_{\mathrm {qGH}} \left ( A_{\theta}, L_{\ell}^{S, \beta, \gamma} \right ), \left (A_{\theta}^{R}, L_{\ell}^{S, \beta, \gamma} \big \rvert_{A_{\theta}^{R}} \right ) < \varepsilon.   
\end{displaymath}
\end{thm}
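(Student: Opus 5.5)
The natural route is to copy the proof of Theorem \ref{convergence qGH}, replacing the conditional expectation $E$ by the truncation map
$$P_R : A_\theta \supseteq C_c(\Gamma,A,\sigma_\theta) \to A_\theta^R, \qquad P_R\Big(\sum_t \delta_t a_t\Big) = \sum_{\ell(t)\le R}\delta_t a_t .$$
The key point is that $R$ can be chosen uniformly in $\beta\ge\alpha$, $\gamma\ge\gamma_0-\eta$ and $\theta\in\Omega$. Note that $\gamma_0-\eta=\frac{\gamma_0+C}{2}>C$.

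\textbf{Step 1: uniform tail estimate.} As in the proof of the CQMS theorem, if $L_{\ell}^{S,\beta,\gamma}(\mathfrak a)\le 1$ then $\|a_t\|\le e^{-\gamma\ell(t)^\beta}$ for $t\neq e$. Hence
$$\|\mathfrak a-P_R(\mathfrak a)\|_\theta\le \sum_{k>R} e^{-\gamma k^\beta}e^{Ck^\alpha}\le \sum_{k>R} e^{-(\gamma_0-\eta-C)k^\alpha},$$
using $k^\beta\ge k^\alpha$ for $k\ge1$ and $\gamma\ge\gamma_0-\eta$. The bound uses only $\ell^1$-domination of the reduced norm, so it is independent of $\theta$, $\beta$ and $\gamma$. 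The right-hand side is the tail of a convergent series because $\gamma_0-\eta-C=\eta>0$. Choose $R$ so that this tail is less than $\varepsilon/2$.

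\textbf{Step 2: the admissible seminorm.} On $A_\theta\oplus A_\theta^R$ define
$$M(\mathfrak b,\mathfrak c)=\max\Big\{L_\ell^{S,\beta,\gamma}(\mathfrak b),\ L_\ell^{S,\beta,\gamma}(\mathfrak c),\ \tfrac{2}{\varepsilon}\|P_R(\mathfrak b)-\mathfrak c\|_\theta\Big\},$$
with $M=\infty$ off $\mathrm{Dom}(L)\oplus\mathrm{Dom}(L|_{A^R_\theta})$. Then check the following.
\begin{itemize}
\item $M$ is a Lip-norm. Its kernel is the scalars, since $P_R(1)=1$. Adjoint invariance holds because $P_R$ commutes with $*$: the length function satisfies $\ell(t^{-1})=\ell(t)$ and the adjoint formula maps the $t$-coefficient to the $t^{-1}$-coefficient. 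Total boundedness of the unit ball follows from total boundedness for each summand via Theorem \ref{totbouiff}.
\item The projection onto $A_\theta^R$ induces $L|_{A_\theta^R}$: given self-adjoint $\mathfrak c$, take $\mathfrak b=\mathfrak c$, so $P_R\mathfrak c=\mathfrak c$.
\item The projection onto $A_\theta$ induces $L$: given self-adjoint $\mathfrak b$, take $\mathfrak c=P_R(\mathfrak b)$. This is self-adjoint, and $L_\ell^{S,\beta,\gamma}(P_R\mathfrak b)\le L_\ell^{S,\beta,\gamma}(\mathfrak b)$ because truncation only removes terms from both the weighted $\ell^2$ sum and the supremum of $L_A(a_t)$.
\end{itemize}

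\textbf{Step 3: Hausdorff distance.} For $\mu\in S(A_\theta)$ pair it with $\mu|_{A_\theta^R}$. For $\nu\in S(A_\theta^R)$ pair it with a Hahn–Banach (Krein) extension to a state on $A_\theta$; since $A_\theta^R$ is an operator system containing the unit, the extension is again a state. For $M(\mathfrak b,\mathfrak c)\le1$,
$$|\mu(\mathfrak b)-\mu(\mathfrak c)|\le \|\mathfrak b-P_R\mathfrak b\|+\|P_R\mathfrak b-\mathfrak c\|<\tfrac{\varepsilon}{2}+\tfrac{\varepsilon}{2},$$
exactly as in Equations \eqref{3.2}–\eqref{3.3}. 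This gives the claimed bound on $\mathrm{dist}_{\mathrm{qGH}}$.

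\textbf{Expected obstacle.} The main obstacle is Step 2: verifying carefully that $M$ is admissible. This requires that $P_R$ preserves self-adjointness and does not increase $L$. The first point needs symmetry of $\ell$, and possibly a remark that $\{t:\ell(t)\le R\}$ is symmetric, so that $A_\theta^R$ is in fact an operator system. The uniformity in Step 1 is routine once the exponent is bounded by $\eta$.
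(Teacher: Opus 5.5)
Your proposal is correct and follows essentially the paper's route: truncation to $\ell(t)\le R$ with $R$ chosen from an $\ell^{1}$ tail bound uniform in $\beta\ge\alpha$, $\gamma\ge\gamma_0-\eta$ and $\theta$, a max-type bridge Lip-norm, and pairing of states by restriction and Hahn--Banach extension. The only variation is that the paper's bridge term is $\frac{1}{\varepsilon}\|\mathfrak a-\mathfrak b\|$, so it spends the tail estimate on admissibility (with witness $\mathfrak b=\mathfrak a_R$), whereas your $\frac{2}{\varepsilon}\|P_R(\mathfrak b)-\mathfrak c\|$ makes admissibility trivial and spends the tail estimate in the Hausdorff estimate, as in Theorem \ref{convergence qGH}.

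One minor slip, which does not affect the uniform choice of $R$: if $\ell$ is not integer-valued, the shell $k<\ell(t)\le k+1$ contributes at most $e^{-\gamma k^{\beta}}\lambda(k+1)\le e^{C}e^{-\gamma k^{\beta}+Ck^{\alpha}}$, using $(k+1)^{\alpha}\le k^{\alpha}+1$. So your displayed tail bound should carry an extra factor $e^{C}$ and start at $k=\lfloor R\rfloor$.
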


\begin{proof}
Fix any $\theta \in \Omega$ arbitrarily. Then note that for all $\theta \in \Omega,$ $x > 0$ and $\mathfrak {a} = \sum\limits_{t \in \Gamma} \delta_t a_t \in C_c \left (\Gamma, \mathcal A, \sigma_{\theta} \right )$ we have 
\bea
\left \|\sum\limits_{\ell (t) > x} \delta_t a_t \right \|_{\mathrm {red}, \sigma_{\theta}} & \leq & \sum\limits_{\ell (t) > x} \left \|a_t \right \| \nonumber \\ & \leq & L_{\ell}^{S, \beta, \gamma} (\mathfrak {a}) \sum\limits_{\ell (t) > x} e^{-\gamma \ell (t)^{\beta}} \nonumber \\ & \leq & L_{\ell}^{S, \beta, \gamma} (\mathfrak {a}) \sum\limits_{\ell (t) > \left \lfloor x \right \rfloor} e^{- \gamma \ell (t)^{\beta}} \nonumber \\ & = & L_{\ell}^{S. \beta, \gamma} (\mathfrak {a}) \sum\limits_{n = \left \lfloor x \right \rfloor}^{\infty} \sum\limits_{n < \ell (t) \leq n + 1} e^{-\gamma n^{\beta}} \nonumber \\ & \leq & \label{tail estimate} L_{\ell}^{S, \beta, \gamma} (\mathfrak {a}) \sum\limits_{n = \left \lfloor x \right \rfloor}^{\infty} e^{-\gamma n^{\beta} + C n^{\alpha}}. \label{tail estimation}
\eea
By virtue of \ref{threshold} and Remark \ref{gamma independence}, the final tailed sum can be made smaller than any positive $\varepsilon$ by choosing $x$ sufficiently large, for all $\beta\geq\alpha,\gamma\geq\eta$ and  all $\theta \in \Omega.$ Let the threshold thus obtained be $R.$ We claim that
$$\mathrm {dist}_{\mathrm {GH}}^{L_{\varepsilon}^{R}} \left (\left (A_{\theta}, L_{\ell}^{S, \beta, \gamma} \right ), \left (A_{\theta}^{R}, L_{\ell}^{S, \beta, \gamma} \big \rvert_{A_{\theta}^{R}} \right ) \right ) < \varepsilon,$$
where $L_{\varepsilon}^{R}$ is an admissible Lip-norm on $A_{\theta} \oplus A_{\theta}^{R}$ defined by
$$L_{\varepsilon}^{R} (\mathfrak {a}, \mathfrak {b}) = \max \left \{L_{\ell}^{S, \beta, \gamma} (\mathfrak {a}), L_{\ell}^{S, \beta, \gamma} \big \rvert_{A_{\theta}^{R}} (\mathfrak {b}), \frac {1} {\varepsilon} \|\mathfrak {a} - \mathfrak {b}\| \right \},$$ for all $\mathfrak {a} \in A_{\theta}$ and $\mathfrak {b} \in A_{\theta}^{R}.$ 

The fact that $L_{\varepsilon}^{R}$ induces  $L_{\ell}^{S, \beta, \gamma} \big \rvert_{A_{\theta}^{R}}$ on the self-adjoint part is a triviality. So it is enough to show that $L_{\varepsilon}^{R}$ induces $L_{\ell}^{S, \beta, \gamma}.$ For that, fix some $\mathfrak{a}=\mathfrak{a}^{\ast} \in A_{\theta}.$ Without loss of generality we may assume that $\mathfrak {a} = \sum\limits_{t \in \Gamma} \delta_t a_t \in C_c \left (\Gamma, \mathcal A, \sigma_{\theta} \right ).$ Set $\mathfrak {b} = \mathfrak {a}_R = \sum\limits_{\ell (t) \leq R} \delta_t a_t.$ Then $\mathfrak{b}\in (A_{R}^{\theta})_{\mathrm{sa}}$. clearly we have
$$L_{\ell}^{S, \beta, \gamma} \big \rvert_{A_{\theta}^{R}} (\mathfrak {b}) \leq L_{\ell}^{S, \beta, \gamma} (\mathfrak {a}).$$
Also, by the equation \eqref{tail estimate}, we have 
$$\|\mathfrak {a} - \mathfrak {b}\| < \varepsilon L_{\ell}^{S, \beta, \gamma} (\mathfrak {a}).$$
Thus $L_{\varepsilon}^{R}$ induces $L_{\ell}^{S, \beta, \gamma},$ on the self-adjoint part as required.

Let $d_{L_{\varepsilon}^{R}}$ be the associated metric on $S \left (A_{\theta} \oplus A_{\theta}^{R} \right ).$ Now for $\mu \in S \left (A_{\theta} \right ),$ setting $\nu_{\mu} = \mu \rvert_{A_{\theta}^{R}} \in S \left (A_{\theta}^{R} \right ),$ we find that for all $\mathfrak {a} \in A_{\theta}$ and $\mathfrak {b} \in A_{\theta}^{R}$ with $L_{\varepsilon}^{R} (\mathfrak {a}, \mathfrak {b}) \leq 1$
$$\left \lvert \mu (\mathfrak {a}) - \nu_{\mu} (\mathfrak {b}) \right \rvert = \left \lvert \mu (\mathfrak {a}) - \mu (\mathfrak {b}) \right \rvert \leq \|\mathfrak {a} - \mathfrak {b}\| \leq \varepsilon L_{\varepsilon}^{R} (\mathfrak {a}, \mathfrak {b}) \leq \varepsilon.$$ In other words, $\mu \in V_{\varepsilon} (\nu_{\mu}).$ Since this holds for any $\mu \in S \left (A_{\theta} \right ),$ it follows that $S \left (A_{\theta} \right ) \subseteq V_{\varepsilon} \left (S \left (A_{\theta}^{R} \right ) \right ).$ Conversely, for any $\nu \in S \left (A_{\theta}^{R} \right ),$ setting $\mu_{\nu}$ to be any Hahn-Banach extension of $\nu$ to $A_{\theta},$ it follows that $\nu \in V_{\varepsilon} \left (\mu_{\nu} \right ).$ Since this also holds for any $\nu \in S \left (A_{\theta}^{R} \right ),$ we conclude that $S \left (A_{\theta}^{R} \right ) \subseteq V_{\varepsilon} \left ( S \left (A_{\theta} \right ) \right ).$ This shows that
$$\mathrm {dist}_{\mathrm {qGH}} \left (\left (A_{\theta}, L_{\ell}^{S, \beta, \gamma} \right ), \left (A_{\theta}^{R}, L_{\ell}^{S, \beta, \gamma} \big \rvert_{A_{\theta}^{R}} \right ) \right ) < \varepsilon,$$
as required.

\end{proof}

To obtain a finite dimensional approximation, we are going to put a natural assumption on the base CQMS $(A,L_A).$ 

\begin{defn} \label{fdim approx prop}

We say that the base CQMS admits the {\bf finite dimensional approximation property} if there is a sequence $\left \{A_n \right \}_{n \geq 1}$ of self-adjoint finite dimensional subspaces of $A$ containing the identity and a sequence of continuous maps $E_n : A \rightarrow A_n$ such that for all $n \geq 1$ and for all $a \in A,$ $$L_A \left (E_n (a) \right ) \leq L_A (a),$$ and, $$\left \|a - E_n (a) \right \| < \beta (n) L_A (a),$$ where $\{\beta (n)\}_{n \geq 1}$ is a sequence of positive real numbers converging to zero.\end{defn} 
This type of approximation property is satisfied by AF algebras, continuous functions on classical compact space admitting ergodic action of a compact Lie group which has a faithful, finite dimensional unitary representations. For $n\in\mathbb{N}$ and $x>0$, consider the finite dimensional complete operator system \begin{displaymath}A_{\theta, n}^{x} : = \left \{\sum\limits_{\ell (g) \leq R} \delta_g b_g\ :\ b_g \in A_n \right \} \subseteq A_{\theta}^{x}. \end{displaymath}
Then clearly $\left (A_{\theta,n}^{x},L_{\ell}^{S,\beta,\gamma}\big\vert_{A_{\theta,n}^{x}} \right )$ is a CQMS.

\begin{thm} \label{fin-dim threshold}
Let $(A,\rho,\Gamma)$ be a $C^{\ast}$-dynamical system satisfying the standard assumption; $(A,L_A)$ is a CQMS satisfying the finite dimensional approximation property; $\{\sigma_{\theta}\}_{\theta\in\Omega}$ be a strongly continuous one-parameter family of unitary $2$-cocyles on $\Gamma$. Then for any $\varepsilon>0$, there is $n_{0}\in\mathbb{N}$ depending on $x$ and independent of the parameters $\beta,\gamma,\theta$ such that 
for all $n\ge n_0$, \begin{displaymath}\mathrm {dist}_{\mathrm {qGH}} \left (\left (A_{\theta}^{x}, L_{\ell}^{S, \beta, \gamma} \big \rvert_{A_{\theta}^{R}} \right ), \left (A_{\theta, n}^{x}, L_{\ell}^{S, \beta, \gamma} \big \rvert_{A_{\theta, n}^{R}} \right ) \right )<\varepsilon.\end{displaymath}
\end{thm}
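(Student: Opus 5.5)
We follow the same scheme as in the proof of Theorem \ref{tail threshold}: we build an explicit admissible Lip-norm on $A_{\theta}^{x}\oplus A_{\theta,n}^{x}$ that penalises the distance between the two coordinates. The comparison map is the coefficientwise application of the maps $E_n$ from Definition \ref{fdim approx prop}. For $\mathfrak{a}=\sum_{\ell(t)\le x}\delta_t a_t\in A_{\theta}^{x}$ put
\begin{displaymath}
\Psi_n(\mathfrak{a}):=\sum_{\ell(t)\le x}\delta_t E_n(a_t)\in A_{\theta,n}^{x}.
\end{displaymath}
We then define
\begin{displaymath}
M_n(\mathfrak{a},\mathfrak{b}):=\max\Big\{L^{S,\beta,\gamma}_{\ell}(\mathfrak{a}),\ L^{S,\beta,\gamma}_{\ell}(\mathfrak{b}),\ \tfrac{1}{\varepsilon}\|\mathfrak{a}-\mathfrak{b}\|_{\theta}\Big\}.
\end{displaymath}
The Hausdorff estimate is then identical to the one in Theorem \ref{tail threshold}. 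Restricting a state of $A_{\theta}^{x}$ to the subsystem $A_{\theta,n}^{x}$, and extending a state of $A_{\theta,n}^{x}$ by Hahn--Banach, in both cases changes its values on the $M_n$-unit ball by at most $\varepsilon$. So the only real work is admissibility, and specifically the claim that $M_n$ induces $L^{S,\beta,\gamma}_{\ell}$ on the self-adjoint part of $A_{\theta}^{x}$. The induced seminorm on $A_{\theta,n}^{x}$ is trivial: take $\mathfrak{a}=\mathfrak{b}$.

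For the nontrivial quotient, fix $\mathfrak{a}=\mathfrak{a}^{\ast}$ in the domain and take $\mathfrak{b}=\Psi_n(\mathfrak{a})$. Since $L_A(E_n(a_t))\le L_A(a_t)$ we get $\sup_t L_A(b_t)\le\sup_t L_A(a_t)$. For the weighted $\ell^2$ part we need $\|E_n(a_t)\|\le\|a_t\|$. This is not given by the definition, so we will either assume it (as holds for the standard examples: conditional expectations in AF algebras, Fejér-type averaging for compact group actions) or replace $E_n$ by a suitable modification. The norm estimate is
\begin{displaymath}
\|\mathfrak{a}-\Psi_n(\mathfrak{a})\|_{\theta}\le\sum_{\ell(t)\le x}\|a_t-E_n(a_t)\|\le \lambda_x\,\beta(n)\sup_t L_A(a_t)\le\lambda_x\,\beta(n)\,L^{S,\beta,\gamma}_{\ell}(\mathfrak{a}),
\end{displaymath}
where $\lambda_x=|\{t:\ell(t)\le x\}|$ is finite because $\ell$ is proper. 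This bound does not involve $\beta,\gamma,\theta$ at all: the reduced norm is dominated by the $\ell^1$ norm of the coefficients for every cocycle. Hence choosing $n_0$ with $\lambda_x\beta(n)<\varepsilon$ for $n\ge n_0$ gives the required uniformity, and $n_0$ depends on $x$ only through $\lambda_x$.

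The main obstacle is that $\Psi_n(\mathfrak{a})$ need not be self-adjoint. The adjoint $\mathfrak{a}^{\ast}$ has coefficients $\rho_t(a_t^{\ast})\overline{\sigma_\theta(t,t^{-1})}$ at $t^{-1}$, and $E_n$ need not commute with $\rho$ or with $\ast$. The first thing to try is to replace $\mathfrak{b}$ by its real part $\tfrac12(\Psi_n(\mathfrak{a})+\Psi_n(\mathfrak{a})^{\ast})$. Adjoint invariance and the triangle inequality for $L^{S,\beta,\gamma}_{\ell}$ keep the Lip bound, and since $\mathfrak{a}$ is self-adjoint, $\|\mathfrak{a}-\mathrm{Re}\,\Psi_n(\mathfrak{a})\|\le\|\mathfrak{a}-\Psi_n(\mathfrak{a})\|$. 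The remaining question is whether $\mathrm{Re}\,\Psi_n(\mathfrak{a})$ still lies in $A_{\theta,n}^{x}$. It does provided each $A_n$ is self-adjoint (which is assumed) and $\rho$-invariant. If $\rho$-invariance of $A_n$ fails, we would instead define $A_{\theta,n}^{x}$ coefficientwise using $\rho_t(A_n)$ at position $t$, or argue directly in Rieffel's order-unit framework via Theorem \ref{Rieffelequality}.
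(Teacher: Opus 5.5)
Your proposal is correct and follows essentially the same route as the paper. Both use the bridge $\max\{L^{S,\beta,\gamma}_{\ell}(\mathfrak{a}),L^{S,\beta,\gamma}_{\ell}(\mathfrak{b}),\frac{1}{\varepsilon}\|\mathfrak{a}-\mathfrak{b}\|\}$, the coefficientwise approximant $\sum_{t}\delta_t E_n(a_t)$ with the $\ell^1$ bound $\lambda_x\beta(n)$ (uniform in $\beta,\gamma,\theta$), and the restriction/Hahn--Banach argument for the Hausdorff estimate. The paper simply asserts that $\sum_t\delta_tE_n(a_t)$ lies in $(A^{x}_{\theta,n})_{\mathrm{sa}}$ and tacitly uses $\|E_n(a_t)\|\le\|a_t\|$, so your explicit contractivity assumption and your passage to $\frac{1}{2}\bigl(\Psi_n(\mathfrak{a})+\Psi_n(\mathfrak{a})^{\ast}\bigr)$ fill in points the paper leaves implicit; the latter is valid once $A_n$ is self-adjoint and $\rho$-invariant, which is needed anyway for $A^{x}_{\theta,n}$ to be an operator system.
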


\begin{proof}
Fix some $\theta,\beta,\gamma$ and $\varepsilon > 0$ arbitrarily. For each $n \in \mathbb N$ and $x > 0,$ define a Lip-norm $L_{\varepsilon}^{R, n}$ on $A_{\theta}^{x} \oplus A_{\theta}^{x, n}$ by
$$L_{\varepsilon}^{x, n} (\mathfrak {a}, \mathfrak {b}) = \max \left \{L_{\ell}^{S, \beta, \gamma} \big \rvert_{A_{\theta}^{x}} (\mathfrak {a}), L_{\ell}^{S, \beta, \gamma} \big \rvert_{A_{\theta, n}^{x}} (\mathfrak {b}), \frac {1} {\varepsilon} \|\mathfrak {a} - \mathfrak {b}\| \right \}.$$
We claim that $L_{\varepsilon}^{x, n}$ is admissible. Again, the fact that $L_{\varepsilon}^{x, n}$ induces $ L_{\ell}^{S, \beta, \gamma} \big \rvert_{A_{\theta, n}^{x}}$ on the self-adjoint part is a triviality. So to show the admissibility of $L_{\varepsilon}^{x, n},$ it is enough to demonstrate that $L_{\varepsilon}^{x, n}$ induces $L_{\ell}^{S, \beta, \gamma} \big \rvert_{A_{\theta}^{x}}.$ on the self-adjoint part. For that, we take any $\mathfrak {a} = \sum\limits_{\ell (t) \leq x} \delta_t a_t \in (A_{\theta}^{x})_{\mathrm{sa}}$ and set $\mathfrak {b} = \sum\limits_{\ell (t) \leq x} \delta_t E_n \left (a_t \right ) \in (A_{\theta, n}^{x})_{\mathrm{sa}}.$ Then
\Bea
L_{\ell}^{S, \beta, \gamma} \big \rvert_{A_{\theta, n}^{x}} (\mathfrak {b}) & = & \max \left \{L_{\ell}^{\beta, \gamma} (\mathfrak {b}), \sup\limits_{\ell (t) \leq x} L_A \left (E_n \left (a_t \right ) \right ) \right \} \\ & \leq & \max \left \{\left (\sum\limits_{0 < \ell (t) \leq x} e^{2 \gamma \ell (t)^{\beta}} \left \|E_n \left (a_t \right ) \right \|^{2} \right )^{\frac {1} {2}}, \sup\limits_{\ell (t) \leq x} L_A \left (a_t \right ) \right \} \\ & \leq & \max \left \{\left (\sum\limits_{0 <\ell (t) \leq x} e^{2 \gamma \ell (t)^{\beta}} \left \|a_t \right \|^{2} \right )^{\frac {1} {2}}, \sup\limits_{\ell (t) \leq x} L_A \left (a_t \right ) \right \} \\ & = & L_{\ell}^{S, \beta, \gamma} \big \rvert_{A_{\theta}^{x}} (\mathfrak {a}).  
\Eea
Also,
\Bea
\|\mathfrak {a} - \mathfrak {b}\| & = & \left \|\sum\limits_{\ell (t) \leq x} \delta_t \left (a_t - E_n \left (a_t \right ) \right ) \right \| \\ & \leq & \sum\limits_{\ell (t) \leq x} \left \|a_t - E_n \left (a_t \right ) \right \| \\ & \leq & \beta (n) \gamma_x \sup\limits_{\ell (t) \leq x} L_A \left (a_t \right ) \\ & \leq & \beta(n) \gamma_x\ L_{\ell}^{S, \beta, \gamma} \big \rvert_{A_{\theta}^{x}} (\mathfrak {a}), 
\Eea
where $\gamma_x : = \left \lvert \left \{t \in \Gamma\ : \ell (t) \leq x \right \} \right \rvert.$ Now since $\beta (n) \xrightarrow{n \rightarrow \infty} 0,$ for sufficiently large $n,$ $\beta (n) < \frac {\varepsilon} {\gamma_x}.$ Clearly such an $n$ is independent of $\theta,\beta,\gamma.$ Consequently, for large enough $n \in \mathbb N,$ we have
    $$\|\mathfrak {a} - \mathfrak {b}\| \leq L_{\ell}^{S, \beta, \gamma} \big \rvert_{A_{\theta}^{x}} (\mathfrak {a}).$$ This shows that $L_{\varepsilon, n}^{x}$ induces $L_{\ell}^{S, \beta, \gamma} \big \rvert_{A_{\theta}^{x}}$ on the self adjoint part, as required. Then adapting the argument given in the proof of the last theorem, one can finish the proof. 
\end{proof}
 In order to apply Rieffel's argument from \cite{Rieffel-Gromov}, we consider the order unit space which is the self adjoint part of the finite dimensional vector space $A^{x}_{n,\theta}=\{\sum\limits_{\ell(t)\leq x}\delta_t a_t:a_t\in\mathcal A\}$. Then, as observed earlier, (Proposition \ref{operatorsystemtoorder}) the pair $\big((A^{x}_{n,\theta})_{\mathrm{sa}},L^{S,\beta,\gamma}_{\ell}\vert_{(A^{x}_{n,\theta})_{\mathrm{sa}}}\big)$ is a CQMS in the sense of Rieffel. We shall denote the order unit space $(A^{x}_{n,\theta})_{\mathrm{sa}}$ by $V_{\theta}$ which is equipped with the order unit norm $ \|\cdot\|_{\theta}$. We denote the common underlying finite dimensional vector space of every $V_{\theta}$ by $V$. Also we denote the Lip-norm $L^{S,\beta,\gamma}_{\ell}\vert_{(A^{x}_{n,\theta})_{\mathrm{sa}}}$ simply by $L_{\beta,\gamma}$. To state the next theorem we need to recall the following definition from \cite{Rieffel-Gromov}*{Definition 10.8}:
\begin{defn}\label{contfamilystates}
Let $V$ be an order unit space equipped with a continuous field of order unit norms $\left \{\|\cdot\|_{\theta} \right \}_{\theta \in \Omega},$ varying over some compact subset $\Omega \subseteq \mathbb R.$ For the order unit space $V_{\theta} = \left (V, \|\cdot\|_{\theta} \right ),$ let $S (V_{\theta})$ be the state space of $V_{\theta}.$ By a continuous field of states, we mean a function $\Phi : \Omega \rightarrow V',$ such that $\Phi_{\theta} \in S (V_{\theta})$ for each $\theta \in \Omega,$ and such that $\theta \mapsto \Phi_{\theta} (v)$ is continuous for each $v \in V.$       
\end{defn}

\begin{thm}
As before let $(\beta_0, \gamma_0, \theta_0)$ be a fixed point in the parameter space and $x > 0;$ $\mathcal S$ be a finite, non-empty set of continuous field of states on $V_{\theta}$. For a given $\varepsilon > 0$, $\beta \geq \alpha,\gamma > C$ and a cocycle parameter $\theta$, define a seminorm $N_{\beta, \theta, \gamma, \varepsilon}$ on $V_{\theta_0} \oplus V_{\theta}$ by
$$N_{\beta, \theta, \gamma, \varepsilon} (u, v) : = \frac {1} {\varepsilon} \max \left \{\left\lvert \Phi_{\theta} (u) - \Phi_{\theta_0} (v) \right\rvert\ :\ \Phi \in \mathcal S \right\}.$$
Then there exists $\delta > 0$ such that for all $\beta \in \left [\beta_0 - \delta, \beta_0 + \delta \right ] \cap [\alpha, \infty),$ for all $\gamma \in \left [\gamma_0 - \delta, \gamma_0 + \delta \right ]$ and for all $\theta$ in a small neighborhood of $\theta_0$, $N_{\beta, \theta, \gamma, \varepsilon}$ is a bridge between $\left (V_{\theta_0}, L_{\beta_0, \gamma_0} \right )$ and $\left (V_{\theta}, L_{\beta, \gamma} \right ).$  In other words, the Lipschitz seminorm $L_{\beta_0, \theta_0, \gamma_0, \varepsilon}^{\beta,\theta,\gamma}$ given by $$L_{\beta_0, \theta_0, \gamma_0, \varepsilon}^{\beta,\theta,\gamma} (u, v) : = \max \left\{L_{\beta_0, \gamma_0} (u), L_{\beta, \gamma} (v), N_{\beta,\theta, \gamma, \varepsilon} (u, v) \right\}$$ is admissible which induces both $L_{\beta, \gamma}$ and $L_{\beta_0, \gamma_0},$ provided that $\beta \in \left [\beta_0 - \delta, \beta_0 + \delta \right ] \cap [\alpha, \infty),$ $\gamma \in \left [\gamma_0 - \delta, \gamma_0 + \delta \right ]$ and $\theta$ is in a sufficiently small neighborhood around $\theta_0$.
\end{thm}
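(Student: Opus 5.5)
Following Rieffel's proof of \cite{Rieffel-Gromov}*{Theorem 9.2} (via his Theorem 11.1), I want to show that for $\beta,\gamma$ near $(\beta_0,\gamma_0)$ and $\theta$ near $\theta_0$, the two quotient seminorms induced by $L^{\beta,\theta,\gamma}_{\beta_0,\theta_0,\gamma_0,\varepsilon}$ on the self-adjoint parts recover $L_{\beta_0,\gamma_0}$ and $L_{\beta,\gamma}$. By symmetry it suffices to treat one side. Given $u\in V$ with $L_{\beta_0,\gamma_0}(u)\le 1$, I must find $v\in V$ with $L_{\beta,\gamma}(v)\le L_{\beta_0,\gamma_0}(u)$ and $N_{\beta,\theta,\gamma,\varepsilon}(u,v)\le L_{\beta_0,\gamma_0}(u)$. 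The converse is analogous, with the roles of the parameters swapped.

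\textbf{Step 1: reduction to a compact set.} Since $V$ is finite dimensional and all continuous fields of states in $\mathcal S$ take the value $1$ on $1$, I can work modulo scalars. I fix a norm on the finite-dimensional space $\tilde V=V/\mathbb{R}1$. I also use one field $\Phi^{1}\in\mathcal S$ to normalize, so that $\Phi^1_{\theta_0}(u)=0$. The set $K_0=\{u: L_{\beta_0,\gamma_0}(u)\le 1,\ \Phi^1_{\theta_0}(u)=0\}$ is then compact. This holds because $L_{\beta_0,\gamma_0}$ is a genuine norm on $\tilde V$: its kernel is the scalars and it is finite on all of $V$, since the coefficients lie in $\mathcal A$. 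Continuity and uniform comparison of norms on finite-dimensional spaces are then available.

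\textbf{Step 2: continuity of the Lip-norms in $(\beta,\gamma)$.} On $V$, which consists of sums over the finite set $\{\ell(t)\le x\}$, the seminorm
\[
L_{\beta,\gamma}(v)=\max\Big\{\Big(\sum_{0<\ell(t)\le x}e^{2\gamma\ell(t)^\beta}\|v_t\|^2\Big)^{1/2},\ \sup_t L_A(v_t)\Big\}
\]
involves only finitely many weights $e^{\gamma\ell(t)^\beta}$. Each weight is jointly continuous in $(\beta,\gamma)$. Hence there is $\delta>0$ such that for $(\beta,\gamma)$ within $\delta$ of $(\beta_0,\gamma_0)$ the ratio of weights lies in $[(1+\kappa)^{-1},1+\kappa]$. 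This gives
\[
L_{\beta,\gamma}\le(1+\kappa)L_{\beta_0,\gamma_0} \quad\text{and}\quad L_{\beta_0,\gamma_0}\le(1+\kappa)L_{\beta,\gamma}.
\]
The $L_A$-part is parameter independent. Crucially, the coefficient norms $\|v_t\|$ are norms in $A$, not in $A_\theta$, so $\theta$ does not enter $L_{\beta,\gamma}$ at all.

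\textbf{Step 3: choice of $v$.} I take $v=(1+\kappa)^{-1}u+c1$, with the scalar $c$ chosen so that $\Phi^1_\theta(v)=\Phi^1_{\theta_0}(u)$. Then $L_{\beta,\gamma}(v)\le L_{\beta_0,\gamma_0}(u)$. It remains to bound, for every $\Phi\in\mathcal S$,
\[
|\Phi_\theta(v)-\Phi_{\theta_0}(u)|\le \kappa\|u\|+|\Phi_\theta(u)-\Phi_{\theta_0}(u)|+|c|.
\]
Each term must be at most $\varepsilon L_{\beta_0,\gamma_0}(u)$.
\begin{itemize}
\item For the first term, I bound $\|u\|$ by a fixed multiple of $L_{\beta_0,\gamma_0}(u)$ on the normalized set, using compactness of $K_0$. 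Then $\kappa$ is chosen small.
\item For the second term, I use that $\theta\mapsto\Phi_\theta(w)$ is continuous for each $w$ and linear in $w$. On a finite-dimensional space, pointwise continuity of finitely many linear functionals gives convergence uniformly on the compact set $K_0$.
\item The third term, $|c|$, is controlled in the same way.
\end{itemize}
The general $u$ follows by homogeneity and scalar invariance, since every field sends $1$ to $1$. Taking $\delta$ and the neighbourhood of $\theta$ to be the minimum over the finitely many requirements gives the result. I also check that the combined seminorm is a Lip-norm on $V\oplus V$, as follows. Its kernel is $\mathbb{C}(1,1)$, because $N$ separates the two scalar lines once $\mathcal S\neq\emptyset$, and everything is finite dimensional, so Theorem \ref{totbouiff} applies trivially.

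\textbf{Main obstacle.} The delicate point is Step 3. The order unit norms $\|\cdot\|_\theta$ vary with $\theta$, and the $\theta$-dependence appears only through the states in $\mathcal S$. I expect to need equicontinuity, uniform on $K_0$, of $\theta\mapsto\Phi_\theta$. This follows from finite dimensionality: choose a basis $w_1,\dots,w_d$ of $V$. Write $u=\sum c_iw_i$, where the coefficients $c_i$ are bounded on $K_0$. Then $|\Phi_\theta(u)-\Phi_{\theta_0}(u)|\le\sum|c_i||\Phi_\theta(w_i)-\Phi_{\theta_0}(w_i)|$, which tends to $0$ uniformly on $K_0$.
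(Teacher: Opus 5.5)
Your proposal is correct and follows essentially the same route as the paper, namely Rieffel's bridge argument. You work modulo scalars on a fixed compact normalized Lip-ball, use that $L_{\beta,\gamma}$ is uniformly comparable to $L_{\beta_0,\gamma_0}$ there, and use that the finitely many fields $\Phi_\theta$ converge to $\Phi_{\theta_0}$ uniformly on that set. The only differences are cosmetic: you rescale by a uniform factor $(1+\kappa)^{-1}$ obtained from an explicit weight comparison, instead of the paper's exact ratio $L_{\beta,\gamma}(w)/L_{\beta_0,\gamma_0}(w)$ together with the lower bound $\lambda_0$, and you use a basis expansion instead of Rieffel's dominating norm $\|\cdot\|_\ast$.

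In the converse direction, do not literally swap roles, since that would make the compact set move with $(\beta,\gamma,\theta)$. Keep the parameter-independent set $K_0$ instead: by your two-sided comparison in Step 2, the relevant normalized $v$ lie in $(1+\kappa)K_0$, which keeps the estimates uniform.
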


\begin{proof}
Let us choose $\varepsilon > 0$ arbitrarily. We shall show that there is a $\delta>0$ and a neighborhood $U$ of $\theta_0$ such that for all $\beta\in[\beta_0-\delta,\beta_0+\delta] \cap [\alpha, \infty)$ and $\gamma\in[\gamma_0-\delta,\gamma_0+\delta],$  $L_{\beta_0, \theta_0, \gamma_0, \varepsilon}^{\beta,\theta,\gamma}$ induces $L_{\beta, \gamma}$ for all $\theta\in U$. It suffices to show that there is a neighborhood $U$ of $\theta_0$ such that given an element $u \in V,$ there exists $v \in V$ such that $L_{\beta_0, \theta_0, \gamma_0, \varepsilon}^{\beta,\theta,\gamma} (u, v) \leq L_{\beta,\gamma} (u)$ for all $(\beta,\gamma,\theta)\in \left ([\beta_0-\delta,\beta_0+\delta] \cap [\alpha, \infty) \right ) \times [\gamma_0-\delta,\gamma_0+\delta]\times U$. Note that $\delta_e$ is the multiplicative identity in $V.$ If $u \in \mathbb R \delta_e,$ then $v = u$ does the job for us. So without loss of generality we may assume that $u \notin \mathbb R \delta_e.$ Let $W$ be the subspace of $V$ complimentary to $\mathbb R \delta_e$. For a fixed vector $v\in V$, by Corollary \ref{continuousfielCalgebras}, the map $\theta\mapsto\lvert\lvert v\rvert\rvert_{\theta}$ is a continuous function. Therefore, by \cite{Rieffel-Gromov}*{Lemma 10.1}, we get a norm $\|\cdot\|_{\ast}$ on $V$ such that $\lvert\lvert v\rvert\rvert_{\theta}\leq \lvert\lvert v\rvert\rvert_{\ast}$ for all $v\in V$ and all $\theta \in \Omega.$ Let $\Sigma_W$ be the unit sphere of $W$ corresponding to the norm $\|\cdot\|_{\ast}$. Note that each $L_{\beta,\gamma}$ is a norm on $W.$ It can be easily checked that $L_{\beta,\gamma}'$s are continuous field of Lip-norms on $V.$ Thus it follows that $L_{\beta,\gamma}$'s form a continuous field of norms on $W.$ By similar argument as in Lemma 10.1 of Rieffel, the function $(\beta, \gamma, w) \mapsto L_{\beta,\gamma} (w)$ is jointly continuous on $\left (\left [\beta_0 - \delta_0, \beta_0 + \delta_0 \right ] \cap [\alpha, \infty) \right ) \times \left [\gamma_0 - \delta_0, \gamma_0 + \delta_0 \right ] \times W,$ for some $\delta_0>0$. So there exists $\lambda_0 > 0$ such that $L_{\beta, \gamma} (w) \geq \lambda_0$ for all $\beta \in \left [\beta_0 - \delta_0, \beta_0 + \delta_0 \right ] \cap [\alpha, \infty),$ $\gamma \in \left [\gamma_0 - \delta_0, \gamma_0 + \delta_0 \right ]$ and $w \in \Sigma_W.$ By joint continuity and compactness, there exists some $0 < \delta_1 < \delta_0$ such that for all $\beta \in \left [\beta_0 - \delta_1, \beta_0 + \delta_1 \right ] \cap [\alpha, \infty),$ $\gamma \in \left [\gamma_0 - \delta_1, \gamma_0 + \delta_1 \right ]$ and $w \in \Sigma_W$ we have $$\left \lvert L_{\beta, \gamma} (w) - L_{\beta_0, \gamma_0} (w) \right \rvert < \frac {\varepsilon \lambda_0^{2}} {2}.$$ Since $\mathcal S$ is finite, by pointwise uniform continuity of $\Phi$ and compactness of $\Sigma_W,$ it follows that there exists $0 < \delta \leq \delta_1$ such that for all $\Phi \in \mathcal S$ and for all $w \in \Sigma_W,$ 
$$\left \lvert \Phi_{\theta} (w) - \Phi_{\theta_0} (w) \right \rvert < \frac {\varepsilon \lambda_0} {2},$$ whenever $\theta$ is in a suitable small compact neighborhood (say $U$) of $\theta_0$ contained in $\Omega.$ Now we need to show that for any $u \in V \setminus \mathbb R e$ and for any $\theta\in U$, there exists some $v \in V$ such that $$L_{\beta_0, \theta_0, \gamma_0, \varepsilon}^{\beta,\theta,\gamma} (u, v) \leq L_{\beta,\gamma} (u),$$ for all $\beta$ and $\gamma$ in some appropriate neighbourhood of $\beta_0$ and $\gamma_0$ respectively. First let us take $u = w \in \Sigma_W.$ For $\beta \in \left [\beta_0 - \delta, \beta_0 + \delta \right ] \cap [\alpha, \infty],$ $\gamma \in \left [\gamma_0 - \delta, \gamma_0 + \delta \right ],$ choose $z = \frac {L_{\beta, \gamma} (w)} {L_{\beta_0, \gamma_0} (w)} w$ so that $L_{\beta_0, \gamma_0} (z) = L_{\beta, \gamma} (w).$ Then for any $\theta\in U$ and for any $\Phi \in \mathcal S$. we have
\Bea
\left \lvert \Phi_{\theta} (w) - \Phi_{\theta_0} (z) \right \rvert & \leq & \left \lvert \Phi_{\theta} (w) - \Phi_{\theta_0} (w) \right \rvert + \left \lvert \Phi_{\theta_0} (w) - \Phi_{\theta_0} (z) \right \rvert \\ & \leq & \frac {\varepsilon \lambda_0} {2} + \left \lvert 1 - \frac {L_{\beta, \gamma} (w)} {L_{\beta_0, \gamma_0} (w)} \right \rvert \left \lvert \Phi_{\theta} (w) \right \rvert \\ & \leq & \frac {\varepsilon \lambda_0} {2} + \frac {\varepsilon \lambda_0^{2}} {2} \left \lvert L_{\beta_0, \gamma_0} (w) \right \rvert^{-1} \|w\|_{\ast} \\ & \leq & \frac {\varepsilon \lambda_0} {2} + \frac {\varepsilon \lambda_0^{2}} {2} \lambda_0^{-1} \\ & = & \varepsilon \lambda_0.     
\Eea
This shows that $N_{\beta,\theta, \gamma, \varepsilon} (w, z) \leq \lambda_0 \leq L_{\beta, \gamma} (w).$ Now for $u = w + \eta \delta_e$ with $w \in \Sigma_W$ and $\eta \in \mathbb R,$ we set $v = z + \eta \delta_e.$ Then we have $L_{\beta_0, \gamma_0} (v) =L_{\beta, \gamma} (u)$ and $\left \lvert \Phi_{\theta_0} (u) - \Phi_{\theta} (v) \right \rvert \leq \varepsilon \lambda_0$ for any $\Phi \in \mathcal S.$ Since every element of $V \setminus \mathbb R \delta_e$ is a positive scalar multiple of elements of the form $w + \eta \delta_e$ with $\eta \in \mathbb R$ and $w \in \Sigma_W,$ it follows that $L_{\beta_0, \theta_0, \gamma_0, \varepsilon}^{\beta,\theta,\gamma}$ induces $L_{\beta, \gamma}$ for any $\theta\in U$ and $(\beta,\gamma) \in \left (\left [\beta_0 - \delta, \beta_0 + \delta \right ] \cap [\alpha, \infty) \right ) \times [\gamma_0-\delta,\gamma_0+\delta]$ and hence it induces $L_{\beta_0, \gamma_0}$ as well, by symmetry. In other words, $L_{\beta_0, \theta_0, \gamma_0, \varepsilon}^{\beta,\theta,\gamma}$ is an admissible Lip-norm for all $\beta \in \left [\beta_0 - \delta, \beta_0 + \delta \right ] \cap [\alpha, \infty),$ $\gamma \in \left [\gamma_0 - \delta, \gamma_0 + \delta \right ]$ and $\theta\in U$. 

\end{proof}

In the next theorem, we apply \cite{Rieffel-Gromov}*{Theorem 10.13}. In our setting, the norms $\|\cdot\|_{\theta}$ are order unit norms on the common vector space $V$. Therefore, by a well-known result of Ellis \cite{Erik-Ellis}*{Theorem II.1.15}, their duals ${\|\cdot\|_{\theta}}^{\prime}$ form a continuous field of base norms on $V^{\prime}$. Invoking \cite{Rieffel-Gromov}*{Theorem 10.13}, for any norm ${\|\cdot\|_{\ast}}^{\prime}$ on $V^{\prime}$ and any $\varepsilon > 0$, there exists a finite continuous field of states $\mathcal {S}$ which is $\varepsilon$-dense in $S(V_{\theta})$ with respect to the norm ${\|\cdot\|_{\ast}}^{\prime}$ for each $\theta \in \Omega.$ The proof of the following theorem is a straightforward adaptation of the proof of \cite{Rieffel-Gromov}*{Theorem 11.2}. However, we have decided to keep it in full details to make the exposition self-contained, particularly because our framework requires dealing with multiple parameters.

\begin{thm} \label{joint continuity lem}
With all the previous notations, for a given $\varepsilon > 0$ there exists $\delta > 0$ and a neighborhood $U$ around $\theta_0$ such that for all $(\beta,\gamma,\theta) \in \left [\beta_0 - \delta, \beta_0 + \delta \right ] \cap [\alpha, \infty)\times [\gamma_0 - \delta, \gamma_0 + \delta ]\times U$, we have
$$\mathrm{dist}_{Q} \left (\left (V_{\theta}, L_{\beta, \gamma} \big \rvert_{V_{\theta}} \right ), \left (V_{\theta_0}, L_{\beta_0, \gamma_0} \big \rvert_{V_{\theta_0}} \right ) \right ) < \varepsilon.$$
\end{thm}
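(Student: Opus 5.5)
We follow the proof of \cite{Rieffel-Gromov}*{Theorem 11.2}: use the bridge of the previous theorem as an admissible Lip-norm, and then bound the Hausdorff distance between the two state spaces with respect to its Monge--Kantorovich metric.

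\textbf{Choice of the finite field of states.} Fix $\varepsilon>0$. Let $\|\cdot\|_{\ast}$ be the norm on $V$ from \cite{Rieffel-Gromov}*{Lemma 10.1} dominating all $\|\cdot\|_{\theta}$. Let $\lambda_0>0$ be the uniform lower bound of $L_{\beta,\gamma}$ on $\Sigma_W$, as in the previous proof, valid on a neighbourhood $[\beta_0-\delta_0,\beta_0+\delta_0]\times[\gamma_0-\delta_0,\gamma_0+\delta_0]$.

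Every $u\in V$ with $L_{\beta,\gamma}(u)\le 1$ has the form $u=w+\eta\delta_e$ with $w\in W$ and $\|w\|_{\ast}\le \lambda_0^{-1}$. Any two states agree on $\delta_e$. Hence for states $\mu,\nu$ we get $|\mu(u)-\nu(u)|=|(\mu-\nu)(w)|\le \lambda_0^{-1}\|\mu-\nu\|_{\ast}'$. So $d_{L_{\beta,\gamma}}$ is uniformly dominated by $\lambda_0^{-1}\|\cdot\|_{\ast}'$, where $\|\cdot\|'_{\ast}$ is the dual norm on $V'$.

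By Ellis' theorem and \cite{Rieffel-Gromov}*{Theorem 10.13}, choose a finite set $\mathcal S$ of continuous fields of states that is $\varepsilon\lambda_0/3$-dense in $S(V_\theta)$ for $\|\cdot\|'_{\ast}$, for every $\theta\in\Omega$. Now apply the previous theorem with this $\mathcal S$ and with $\varepsilon/3$ in place of $\varepsilon$. This gives $\delta>0$ and a neighbourhood $U$ of $\theta_0$ such that $M:=L^{\beta,\theta,\gamma}_{\beta_0,\theta_0,\gamma_0,\varepsilon/3}$ is admissible on $V_{\theta_0}\oplus V_\theta$ for all $(\beta,\gamma,\theta)$ in the required range. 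Shrink $\delta\le\delta_0$ so that the $\lambda_0$ bound still applies.

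\textbf{Hausdorff estimate.} Take $\mu\in S(V_\theta)$ and pick $\Phi\in\mathcal S$ with $\|\mu-\Phi_\theta\|'_{\ast}<\varepsilon\lambda_0/3$. For $(u,v)$ with $M(u,v)\le 1$, split
\[
|\mu(v)-\Phi_{\theta_0}(u)|\le|\mu(v)-\Phi_\theta(v)|+|\Phi_\theta(v)-\Phi_{\theta_0}(u)|.
\]
The first term is at most $\lambda_0^{-1}\|\mu-\Phi_\theta\|'_{\ast}<\varepsilon/3$, because $L_{\beta,\gamma}(v)\le1$. The second term is at most $\varepsilon/3$, because $N_{\beta,\theta,\gamma,\varepsilon/3}(u,v)\le 1$; here I would double-check which argument carries $\theta$ versus $\theta_0$ in the paper's definition of $N$ and pair them consistently. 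Hence $d_M(\mu,\Phi_{\theta_0})<\varepsilon$, where both states are regarded as states on $V_{\theta_0}\oplus V_\theta$. Symmetrically, each state of $V_{\theta_0}$ lies within $\varepsilon$ of some $\Phi_\theta$. Therefore $\mathrm{dist}^{d_M}_H(S(V_{\theta_0}),S(V_\theta))<\varepsilon$, which bounds $\mathrm{dist}_Q$.

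\textbf{Main obstacle.} The delicate point is uniformity. The constant $\lambda_0$ and the density of $\mathcal S$ must hold simultaneously for all $(\beta,\gamma,\theta)$ near $(\beta_0,\gamma_0,\theta_0)$. In particular, the comparison of $d_{L_{\beta,\gamma}}$ with $\|\cdot\|'_{\ast}$ must not depend on $\theta$. This is ensured because $\|\cdot\|_{\ast}$ dominates every $\|\cdot\|_\theta$ and $\lambda_0$ comes from joint continuity on a compact set. The order of choices matters: first $\lambda_0$, then $\mathcal S$, then $\delta$ and $U$.
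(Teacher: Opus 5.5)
Your proposal is correct and follows essentially the same route as the paper: an adaptation of Rieffel's Theorem 11.2, using the bridge from the previous theorem together with a finite, uniformly dense family $\mathcal S$ of continuous fields of states. The one small difference is how you get the uniform comparison: you derive $d_{L_{\beta,\gamma}}\le \lambda_0^{-1}\|\cdot\|'_{\ast}$ directly from the lower bound $\lambda_0$ of the previous proof, and you split the estimate at the level of pairs $(u,v)$ with $M(u,v)\le 1$, whereas the paper obtains $\rho_{L_{\beta,\gamma}}(\mu,\nu)\le\|\mu-\nu\|'_{\ast}$ from continuity of the dual norms on $(V/\mathbb{R}e)'$ via Rieffel's Lemma 10.1 and then applies the triangle inequality for $\rho_L$.
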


\begin{proof}
Let $\tilde V = V/ \mathbb R e.$ Then each $L_{\beta, \gamma}$ becomes a norm $\widetilde {L_{\beta, \gamma}}$ on $\tilde V.$ Since $\left \{L_{\beta, \gamma} \right \}_{\beta \geq \alpha, \gamma > C}$ is a continuous field of Lip-norms on $V,$ it follows that $\left \{\widetilde {L_{\beta, \gamma}} \right \}_{\beta \geq \alpha, \gamma > C}$ is a continuous field of norms on $\tilde {V}.$ According to Lemma 10.1 of Rieffel, the field of dual norms $\left \{\widetilde {L_{\beta, \gamma}}^{\prime} \right \}_{\gamma > C}$ is also continuous on $\tilde {V}^{\prime}.$ Note that the dual of $\tilde {V}$ is canonically identified with the subspace ${V^{\prime}}^{\circ}$ of $V^{\prime}$ annihilating the one dimensional subspace $\mathbb R e$ of $V.$ Thus each $\widetilde {L_{\beta, \gamma}}^{\prime}$ give rise to a norm $L_{\beta, \gamma}^\prime$ on $V^{\prime \circ}$ and the field of norms $\left \{L_{\beta, \gamma}^{\prime} \right \}_{\beta \geq \alpha, \gamma > C}$ on $V^{\prime \circ},$ thus obtained, are also continuous. Again by virtue of \cite{Rieffel-Gromov}*{Lemma 10.1}, there exists a norm $\|\cdot\|_{\ast}^{\prime}$ on ${V^{\prime}}^{\circ}$ such that $L_{\beta, \gamma}^{\prime} \leq \|\cdot\|_{\ast}^{\prime}$ for all $\beta \in \left [\beta_0 - \delta_0, \beta_0 + \delta_0 \right ] \cap [\alpha, \infty)$ and $\gamma \in \left [\gamma_0 - \delta_0, \gamma_0 + \delta_0 \right ],$ for some $\delta_0>0$. Then for all $\theta \in \Omega$ for all $\mu, \nu \in S \left (V_{\theta} \right ),$ for all $\beta \in \left [\beta_0 - \delta_0, \beta_0 + \delta_0 \right ] \cap [\alpha, \infty)$ and $\gamma \in \left [\gamma_0 - \delta_0, \gamma_0 + \delta_0 \right ],$ we have 
$$\rho_{L_{\beta, \gamma}} (\mu, \nu) \leq L_{\beta, \gamma}^{\prime} (\mu - \nu) \leq \|\mu - \nu \|_{\ast}^{\prime}.$$
Let $\varepsilon > 0$ be given. According to \cite{Rieffel-Gromov}*{Theorem 10.11}, we can find a finite family $\mathcal S$ of continuous field of states such that $\left \{\Phi_{\theta}\ :\ \Phi \in \mathcal S \right \}$ is $\frac {\varepsilon} {2}$-dense in $S \left (V_{\theta} \right )$ with respect to the norm $\|\cdot\|_{\ast}^{\prime}$ for every $\theta \in \Omega.$ Define $N_{\beta, \theta, \gamma, \varepsilon}$ as in the last Theorem with $\varepsilon$ replaced by $\frac {\varepsilon} {2}.$ We denote the bridge by $N$. By the last theorem we can choose $0 < \delta < \delta_0$ such that $N$ is a bridge between $\left (V_{\theta}, L_{\beta, \gamma} \right )$ and $\left (V_{\theta_0}, L_{\beta_0, \gamma_0} \right ),$ for all $\beta \in \left [\beta_0 - \delta, \beta_0 + \delta \right ] \cap [\alpha, \infty),$ $\gamma \in \left [\gamma_0 - \delta, \gamma_0 + \delta \right ]$ and for any $\theta\in U$. We show that this $\delta$ and $U$ do the job for us.

For $\beta \in \left [\beta_0 - \delta, \beta_0 + \delta \right ] \cap [\alpha, \infty),$ $\gamma \in \left [\gamma_0 - \delta, \gamma_0 + \delta \right ]$ and $\theta\in U$, let $L^{\beta, \theta, \gamma}_{\beta_0, \theta_0, \gamma_0, \varepsilon}$ be the Lip-norm on $V_{\theta} \oplus V_{\theta_0}$ as in the last theorem. For simplicity, we write $L$ for $L^{\beta, \theta, \gamma}_{\beta_0, \theta_0, \gamma_0, \varepsilon}.$ We show that $\mathrm{dist}_{\mathrm{GH}}^{\rho_{L}} \left (S \left (V_{\theta} \right ), S \left (V_{\theta_0} \right ) \right ) < \varepsilon.$ 

Let $\mu \in S \left (V_{\theta} \right ).$ By the choice of $\mathcal S,$ there is an $\Phi^{\mu} \in \mathcal S$ such that 
$$\rho_{L_{\beta, \gamma}} \left (\mu, \Phi^{\mu}_{\theta} \right ) \leq \left \|\mu - \Phi_{\theta}^{\mu} \right \|_{\ast}^{\prime} < \frac {\varepsilon} {2}.$$
We now show that $\rho_{L} \left (\mu, \Phi^{\mu}_{\theta_0} \right ) < \varepsilon,$ which shows that $S \left (V_{\theta} \right )$ is within the $\varepsilon$-neighbourhood of $S \left (V_{\psi} \right )$ with respect to $\rho_L.$

Let $u, v \in V,$ with $(u, v)$ viewed as an element of $V_{\theta} \oplus V_{\psi},$ and suppose that $L (u, v) \leq 1.$ Then $N (u, v) < \frac {\varepsilon} {2},$ so that 
$$\left \lvert \Phi^{\mu}_{\theta} (u) - \Phi^{\mu}_{\theta_0} (v) \right \rvert < \frac {\varepsilon} {2}.$$ Since this holds for all $u, v \in V$ with $L (u, v) \leq 1,$ it follows that 
$$\rho_{L} \left (\Phi^{\mu}_{\theta}, \Phi^{\mu}_{\theta_0} \right ) \leq \frac {\varepsilon} {2}.$$ 
Thus 
$$\rho_L \left (\mu, \Phi^{\mu}_{\theta_0} \right ) \leq \rho_L \left (\mu, \Phi^{\mu}_{\theta} \right ) + \rho_L \left (\Phi^{\mu}_{\theta}, \Phi^{\mu}_{\theta_0} \right ) < \varepsilon,$$ as claimed.

By reversing the roles of $\theta$ and $\theta_0,$ we see that $S \left (V_{\theta_0} \right )$ is also within the $\varepsilon$-neighbourhood of $S \left (V_{\theta} \right )$ with respect to $\rho_L,$ as desired.

\end{proof}

\begin{thm} \label{joint continuity qGH}
As before, let $(\beta_0,\gamma_0,\theta_0)$ be a fixed point in the parameter space. Then for a given $\varepsilon > 0$ there exists $\delta > 0$ and a neighborhood $U$ around $\theta_0$ such that for all $\theta \in U$, for all $\beta \geq \alpha$ with $\left \lvert \beta - \beta_0 \right \rvert < \delta$ and for all $\gamma > C$ with $\left \lvert \gamma - \gamma_0 \right \rvert < \delta,$ we have
$$\mathrm{dist}_{\mathrm {qGH}} \left (\left (A_{\theta}, L_{\ell}^{S,\beta, \gamma} \right ), \left (A_{\theta_0}, L_{\ell}^{S,\beta_0,\gamma_0} \right ) \right ) < \varepsilon.$$
\end{thm}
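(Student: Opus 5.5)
The plan is a standard $\varepsilon/5$ triangle-inequality argument for $\mathrm{dist}_{\mathrm{qGH}}$, passing through the finite dimensional approximations in the order infinite $\to$ truncated $\to$ finite dimensional $\to$ finite dimensional. Fix $\varepsilon>0$. We use that $\mathrm{dist}_{\mathrm{qGH}}$ satisfies the triangle inequality, which follows from Theorem \ref{Rieffelequality} and the corresponding property of Rieffel's $\mathrm{dist}_Q$.

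\emph{Step 1: truncation.} First restrict to $|\gamma-\gamma_0|<\delta\le\eta=\frac{\gamma_0-C}{2}$, so that $\gamma\ge\gamma_0-\eta$. By Theorem \ref{tail threshold}, the tail estimate \eqref{tail estimation} is uniform in $\beta\ge\alpha$, in such $\gamma$, and in $\theta\in\Omega$. Hence we may choose a single $R>0$ such that
\[
\mathrm{dist}_{\mathrm{qGH}}\big((A_\theta,L^{S,\beta,\gamma}_\ell),(A^R_\theta,L^{S,\beta,\gamma}_\ell|_{A^R_\theta})\big)<\varepsilon/5
\]
for all these parameters. The same bound holds at $(\beta_0,\gamma_0,\theta_0)$.

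\emph{Step 2: finite dimensional reduction.} With this $R$ fixed, Theorem \ref{fin-dim threshold} gives an $n$, independent of $(\beta,\gamma,\theta)$, such that the distance from $A^R_\theta$ to $A^R_{\theta,n}$, each carrying the restricted Lip-norm, is less than $\varepsilon/5$. Again this holds at both the varying point and the base point.

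\emph{Step 3: the finite dimensional pieces.} The spaces $A^R_{\theta,n}$ all have the same underlying finite dimensional vector space $V$. By Theorem \ref{Rieffelequality}, their qGH distance equals $\mathrm{dist}_Q$ of the self-adjoint parts $(V_\theta,L_{\beta,\gamma})$. Theorem \ref{joint continuity lem} then yields $\delta'>0$ and a neighbourhood $U$ of $\theta_0$ on which this distance is less than $\varepsilon/5$. Take $\delta=\min(\delta',\eta)$.

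\emph{Conclusion.} Chaining the five estimates
\[
A_\theta\to A^R_\theta\to A^R_{\theta,n}\to A^R_{\theta_0,n}\to A^R_{\theta_0}\to A_{\theta_0}
\]
gives a total distance less than $\varepsilon$, as claimed.

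\emph{Main obstacle.} The delicate point is uniformity. $R$ and $n$ must be chosen before $\delta$ and $U$, because Theorem \ref{joint continuity lem} is applied to a fixed finite dimensional space $V$ that depends on $(R,n)$. We therefore have to verify the following.
\begin{itemize}
\item The tail bound $\sum_{k\ge\lfloor R\rfloor}e^{-\gamma k^\beta+Ck^\alpha}$ is uniform over $\beta\ge\alpha$ and $\gamma\ge\gamma_0-\eta>C$. It is dominated by the $\beta=\alpha$, $\gamma=\gamma_0-\eta$ series, because $k^\beta\ge k^\alpha$ for $k\ge1$. The $k=0$ term has to be treated separately.
\item The hypotheses of Theorem \ref{joint continuity lem} hold. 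This requires continuity of $\theta\mapsto\|v\|_\theta$ on $V$, which is where exactness of $\Gamma$ enters via Corollary \ref{continuousfielCalgebras}. It also requires joint continuity of $(\beta,\gamma)\mapsto L_{\beta,\gamma}(v)$. On the finite dimensional space, this is clear because the weights $e^{2\gamma\ell(t)^\beta}$ range over finitely many $t$ and depend continuously on $(\beta,\gamma)$.
\end{itemize}
Finally, we note a conflict of notation: the $\delta$ used in the definition of $L^{S,\beta,\gamma}_\ell$ (as in $\delta_t a_t$) is distinct from the parameter radius $\delta$ introduced above.
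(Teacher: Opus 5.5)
Your proposal is correct and follows essentially the same route as the paper. Both choose $R$ and $n$ uniformly via Theorems \ref{tail threshold} and \ref{fin-dim threshold}, applied at both $(\beta,\gamma,\theta)$ and $(\beta_0,\gamma_0,\theta_0)$, then invoke Theorem \ref{joint continuity lem} on the resulting fixed finite dimensional space, and finish with a five-step triangle inequality contributing $\varepsilon/5$ each, while your explicit choice $\delta=\min(\delta',\eta)$ and monotonicity argument for the uniform tail bound make precise what the paper leaves implicit (its unspecified $\delta_0$ and Remark \ref{gamma independence}).
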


\begin{proof}
Let $\eta= \frac {\gamma_0 - C} {2}$ and $\varepsilon > 0$ be arbitrary. Then by virtue of Theorem \ref{tail threshold} and Theorem \ref{fin-dim threshold}, there exist $R > 0$ and $n_0 \in \mathbb N,$ such that for $\theta \in \Omega,$ for all $\beta \geq \alpha $ and $\gamma \geq \gamma_0 - \eta,$ we have
$$\mathrm{dist}_{\mathrm{qGH}} \left (\left (A_{\theta}, L^{S,\beta, \gamma}_{\ell} \right ), \left (A_{\theta}^{R}, L^{S,\beta,\gamma} \big \rvert_{A_{\theta}^{R}} \right ) \right ) < \frac {\varepsilon} {5},$$ and,
$$\mathrm{dist}_{\mathrm{qGH}} \left (\left (A_{\theta}^{R}, L^{S,\beta, \gamma}_{\ell} \big \rvert_{A_{\theta}^{R}} \right ), \left (A_{n_0, \theta}^{R}, L_{\ell}^{S,\beta,\gamma} \big \rvert_{A_{n_0, \theta}^{R}} \right )  \right ) < \frac {\varepsilon} {5}.$$
Thus by the triangle inequality we have
$$\mathrm{dist}_{\mathrm{qGH}} \left (\left (A_{\theta}, L_{\ell}^{S,\beta, \gamma} \right ), \left (A_{n_0, \theta}^{R}, L_{\ell}^{S,\beta, \gamma} \big \rvert_{A_{n_0, \theta}^{R}} \right ) \right ) < \frac {2 \varepsilon} {5},$$ for all $\theta \in \Omega,$ $\beta \geq\alpha $ and $\gamma \geq \gamma_0 - \eta.$ Thus, in particular, for $\theta = \theta_0,\beta=\beta_0$ and $\gamma = \gamma_0$ we have
$$\mathrm{dist}_{\mathrm{qGH}} \left (\left (A_{\theta_0}, L_{\ell}^{S,\beta_0, \gamma_0} \right ), \left (A_{n_0, \theta_0}^{R}, L_{\ell}^{S,\beta_0, \gamma_0} \big \rvert_{A_{n_0, \theta_0}^{R}} \right ) \right ) < \frac {2 \varepsilon} {5}.$$
Denote $\left (A_{n_0, \psi}^{R} \right )_{\mathrm {sa}}$ by $V_{\psi}$ for $\psi \in \Omega$ as before. In order to show the joint continuity, it is enough to get hold of some $0 < \delta < \delta_0$ such that for all $\theta \in U$, where $U$ is some open neighborhood of $\theta_0$, for all $\beta \geq \alpha$ with $\left \lvert \beta - \beta_0 \right \rvert < \delta$ and for all $\gamma > C$ with $\left \lvert \gamma - \gamma_0 \right \rvert < \delta,$ we have
$$\mathrm{dist}_{\mathrm{qGH}} \left (\left (A_{n_0, \theta}^{R}, L_{\ell}^{S,\beta, \gamma} \big \rvert_{A_{n_0, \theta}^{R}} \right ), \left (A_{n, \theta_0}^{R}, L_{\ell}^{S,\beta_0, \gamma_0} \big \rvert_{A_{n_0, \theta_0}^{R}} \right ) \right ) < \frac {\varepsilon} {5}.$$
But as 
\begin{displaymath}
    \mathrm{dist}_{\mathrm{qGH}} \left (\left (A_{n_0, \theta}^{R}, L_{\ell}^{S,\beta, \gamma} \big \rvert_{A_{n_0, \theta}^{R}} \right ), \left (A_{n_0, \theta_0}^{R}, L_{\ell}^{S,\beta_0, \gamma_0} \big \rvert_{A_{n_0, \theta_0}^{R}} \right ) \right )=\mathrm{dist}_{Q} \left (\left (V_{\theta}, L_{\beta, \gamma} \big \rvert_{V_{\theta}} \right ), \left (V_{\theta_0}, L_{\beta_0, \gamma_0} \big \rvert_{V_{\theta_0}} \right ) \right ),
\end{displaymath}
this is taken care of by virtue of Theorem \ref{joint continuity lem}. This completes the proof.

\end{proof}
As a consequence, we have the following theorem which is the main theorem of this subsection. We recall all the assumptions in the statement of the theorem for the reader's convenience:
\begin{thm}
    \label{mainthm} Let $(A,\rho,\Gamma)$ be a $C^{\ast}$-dynamical system such that\\
    (i) $A$ is a unital $C^{\ast}$-algebra with a Lip-norm $L_A$ making $(A,L_A)$ a compact quantum metric space with the finite dimensional approximation property as defined in Definition \ref{fdim approx prop}.\\
    (ii) $\rho$ is a {\bf Lip-isometric} action with respect to the Lip-norm $L_A$.\\
    (iii) $\Gamma$ is a {\bf discrete exact} group admitting a strongly continuous one-parameter family $\{\sigma_{\theta}\}_{\theta\in\Omega}$ of unitary $2$-cocycles where $\Omega$ is some compact parameter space.\\
    (iv) $\Gamma$ has a length function $\ell$ and the growth function $\lambda$ of $\Gamma$ satisfies $\lambda(n)\leq e^{Cn^{\alpha}}$ for some positive constant $C$ and some constant $0<\alpha\leq 1$.\\
    Then the three parameter family of compact quantum metric spaces $\{\big(A\rtimes_{r,\rho,\sigma_{\theta}}\Gamma,L^{S,\beta,\gamma}_{\ell}\big)\}_{\beta,\gamma,\theta}$ where $\beta\geq\alpha,\gamma>C$ and $\theta\in\Omega$ is jointly continuous with respect to the parameters $\beta,\gamma,\theta$. Here $L^{S,\beta.\gamma}_{\ell}$ is the $2$-parameter family of Lip-norms as defined in Section \ref{Lip-norm defn}.
\end{thm}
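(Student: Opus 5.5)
The plan is to reduce the statement to Theorem \ref{joint continuity qGH}. That theorem already yields, for a fixed base point $(\beta_0,\gamma_0,\theta_0)\in[\alpha,\infty)\times(C,\infty)\times\Omega$ and any $\varepsilon>0$, a $\delta>0$ and a neighborhood $U$ of $\theta_0$ with
$$\mathrm{dist}_{\mathrm{qGH}}\big((A_\theta,L^{S,\beta,\gamma}_\ell),(A_{\theta_0},L^{S,\beta_0,\gamma_0}_\ell)\big)<\varepsilon \quad\text{for all } \theta\in U,\ |\beta-\beta_0|<\delta,\ |\gamma-\gamma_0|<\delta.$$
Joint continuity of the family at $(\beta_0,\gamma_0,\theta_0)$ with respect to $\mathrm{dist}_{\mathrm{qGH}}$ means exactly this. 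Since the base point is arbitrary, the main theorem follows. The only real work is to check that the hypotheses (i)--(iv) supply every ingredient used along the chain of results leading to Theorem \ref{joint continuity qGH}, and to shrink $\delta$ so that $\gamma_0-\delta>C$.

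I would first confirm that each member of the family is a CQMS. By (ii) and the lemma on Lipschitz seminorms, $L^{S,\beta,\gamma}_\ell$ is a Lipschitz seminorm. By (iv) and the existence theorem for $\beta\ge\alpha$, $\gamma>C$, it is a Lip-norm on every $A_\theta$, and this holds for every cocycle $\sigma_\theta$.

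I would then trace the three-step approximation. Fix $\eta=(\gamma_0-C)/2$ and take $\delta<\eta$. Theorem \ref{tail threshold} gives a radius $R$ that is uniform in $\theta\in\Omega$, $\beta\ge\alpha$ and $\gamma\ge\gamma_0-\eta$. This uniformity comes from the estimate \eqref{tail estimation}, because the reduced norm is dominated by the $\ell^1$ sum independently of $\sigma_\theta$. Theorem \ref{fin-dim threshold} uses hypothesis (i) to give an $n_0$ depending only on $R$. The finite-dimensional pieces $V_\theta=(A^R_{n_0,\theta})_{\mathrm{sa}}$ are then compared via Theorem \ref{joint continuity lem}. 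That theorem needs two inputs: continuity of $\theta\mapsto\|v\|_\theta$, which is where exactness in (iii) enters through Corollary \ref{continuousfielCalgebras}; and continuity of $(\beta,\gamma)\mapsto L_{\beta,\gamma}(v)$ on the finite-dimensional space $V$. Finally, Theorem \ref{Rieffelequality} identifies the qGH distance between the truncated spaces with Rieffel's $\mathrm{dist}_Q$ between their self-adjoint parts. The triangle inequality then combines the pieces: $\varepsilon/5$ for the tail and for the finite-dimensional approximation at both ends, plus the middle term.

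I expect the main obstacle to be the continuity of the field of Lip-norms $L_{\beta,\gamma}$ on $V$. For fixed $v=\sum_{\ell(t)\le R}\delta_t b_t$, the quantity $L_{\beta,\gamma}(v)$ is the maximum of the finite sum $\big(\sum e^{2\gamma\ell(t)^\beta}\|b_t\|^2\big)^{1/2}$, which is visibly continuous in $(\beta,\gamma)$, and $\sup_t L_A(b_t)$, which does not depend on $(\beta,\gamma)$. So continuity holds pointwise. The uniform lower bound $\lambda_0$ on the unit sphere $\Sigma_W$ then follows from joint continuity on a compact set, as in Rieffel's Lemma 10.1. One point needs care: $L_A$ restricted to $A_{n_0}$ must be finite, so I would state explicitly that $A_{n}\subseteq\mathrm{Dom}(L_A)$. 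This is implicit in Definition \ref{fdim approx prop}, since $L_A(E_n(a))\le L_A(a)<\infty$ for $a$ in the domain. With that observation the argument closes.
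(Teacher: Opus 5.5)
Your reduction is exactly the paper's: Theorem \ref{mainthm} is stated there as an immediate consequence of Theorem \ref{joint continuity qGH}, which is assembled from Theorems \ref{tail threshold}, \ref{fin-dim threshold} and \ref{joint continuity lem} as you describe. However, you make the check that (i)--(iv) supply every ingredient the substance of your proof, and that check misses an input of the Rieffel step. The paper passes over the same point when it speaks of ``the common underlying vector space $V$''.

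Besides the two continuity statements you list, Theorem \ref{joint continuity lem} needs a single real vector space underlying all the $V_\theta$. The rescaling $z=\frac{L_{\beta,\gamma}(w)}{L_{\beta_0,\gamma_0}(w)}w$ in the bridge argument, and Rieffel's continuous fields of states, both treat $w\in V_\theta$ as an element of $V_{\theta_0}$. But the twisted involution
\[
\Big(\sum_t \delta_t a_t\Big)^{*}=\sum_t \delta_{t^{-1}}\,\overline{\sigma_\theta(t,t^{-1})}\,\rho_t(a_t^{*})
\]
depends on $\theta$, so $V_\theta=(A^R_{n_0,\theta})_{\mathrm{sa}}$ moves with $\theta$. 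For example, take $\Gamma=\mathbb{Z}^2$, $A=\mathbb{C}$, $\sigma_\theta(x,y)=e^{2\pi i\theta x_1y_2}$. Then $\sigma_\theta(x,-x)=e^{-2\pi i\theta x_1x_2}$, and for $R\ge 2$ the element $\delta_{(1,1)}+\delta_{(-1,-1)}$ lies in $V_0$ but not in $V_{1/4}$.

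One repair is a local gauge on the finite ball. Choose unimodular $\nu_\theta(t)$ for $\ell(t)\le R$, continuous near $\theta_0$, with $\nu_{\theta_0}\equiv 1$ and $\nu_\theta(t)\nu_\theta(t^{-1})=\overline{\sigma_\theta(t,t^{-1})}\,\sigma_{\theta_0}(t,t^{-1})$. Put the whole factor on one element of each pair $\{t,t^{-1}\}$, and take a principal square root when $t=t^{-1}$. Then $\Psi_\theta(\sum_t\delta_tb_t)=\sum_t\delta_t\nu_\theta(t)b_t$ maps $V_{\theta_0}$ real-linearly onto $V_\theta$, fixes $\delta_e$, and preserves $L_{\beta,\gamma}$ exactly. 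Moreover $\theta\mapsto\|\Psi_\theta v\|_\theta$ is continuous, by Corollary \ref{continuousfielCalgebras} together with $\|\sum_t\delta_tc_t\|_\theta\le\sum_t\|c_t\|$. Transporting the order-unit structure along $\Psi_\theta$, Rieffel's argument then runs on the fixed space $V_{\theta_0}$.

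Two further points concern the finite-dimensional approximation.
\begin{enumerate}
\item Your argument that $A_n\subseteq\mathrm{Dom}(L_A)$ does not work. The inequality $L_A(E_n(a))\le L_A(a)$ only gives $E_n(\mathrm{Dom}(L_A))\subseteq\mathrm{Dom}(L_A)$. Replace $A_n$ by $A_n'=A_n\cap\mathrm{Dom}(L_A)$, which is still self-adjoint and contains both $1$ and $E_n(\mathrm{Dom}(L_A))$.
\item More importantly, Definition \ref{fdim approx prop} does not give what Theorem \ref{fin-dim threshold} needs. It does not make $A^R_{n_0,\theta}$ an operator system, since that space is closed under the twisted adjoint only if $\rho_t(A_{n_0})\subseteq A_{n_0}$ for $\ell(t)\le R$. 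It does not make $\mathfrak b=\sum_t\delta_tE_n(a_t)$ self-adjoint when $\mathfrak a$ is, since that needs $E_n$ linear, $*$-preserving and $\rho$-equivariant. And the inequality $L^{\beta,\gamma}_\ell(\mathfrak b)\le L^{\beta,\gamma}_\ell(\mathfrak a)$ in Theorem \ref{fin-dim threshold} uses $\|E_n(a)\|\le\|a\|$. All three hold in both examples of Section \ref{Examples}, but none is part of (i).
\end{enumerate}
The first two gaps in the second point can be closed as follows. At each $t$, take the coefficient space $A_n'+\rho_{t^{-1}}(A_n')$; this gives a $\theta$-independent finite-dimensional space closed under every twisted adjoint. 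Then replace $\mathfrak b$ by $\tfrac12(\mathfrak b+\mathfrak b^*)$, which increases neither $L^{S,\beta,\gamma}_\ell$ (by adjoint invariance) nor the distance to the self-adjoint $\mathfrak a$. Norm-contractivity of $E_n$ should simply be added to hypothesis (i).
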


\subsection{The metric dimension} In this subsection, we are going to obtain bounds for the metric dimension of the family of CQMS introduced in the last subsection. We shall see that the bounds depend on the Lip-norm parameters $\beta,\gamma$ and are independent of the cocycle parameter $\theta$. The techniques to obtain bounds follow the techniques used in \cites{Soumalya-Arnab1,Soumalya-Arnab2}. We are going to retain all the notations used in the previous subsection. To obtain the bounds we do not need the discrete group $\Gamma$ to be exact.
\begin{thm}\label{Mdimupper}
Let $(A,\rho,\Gamma)$ be a $C^{\ast}$-dynamical system satisfying the standard assumptions; $\rho$ is a Lip-isometric action on $A$ with respect to the Lip-norm $L_A$ on $A$ so that we have a two parameter family of Lip-norms $\{L^{S,\beta,\gamma}_{\ell}\}_{\beta\geq \alpha,\gamma>C}$ on the cocycle twisted crossed product $A\rtimes_{r,\rho,\sigma}\Gamma$ where $\sigma$ is a unitary $2$-cocycle on $\Gamma$. Then 
\begin{displaymath}\mathrm {Mdim}_{L_{\ell}^{S, \beta, \gamma}} \left (A \rtimes_{r, \rho, \sigma} \Gamma \right ) \leq \begin{cases} \mu, \quad \mathrm{if}\ \beta > \alpha, \\ \frac {2 C + \mu (\gamma + C)} {\gamma - C}, \quad \mathrm{if}\ \beta = \alpha\ \mathrm{and}\ 0 < \alpha < 1, \\ \frac {C + \mu \gamma} {\gamma - C}, \quad \mathrm{if}\ \beta = \alpha = 1, \end{cases}\end{displaymath}
where $\mu = \mathrm{Mdim}_{L_A} (A).$

\end{thm}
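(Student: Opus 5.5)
The plan is to estimate $D(\mathcal L^{S,\beta,\gamma}_1,\delta)$ directly, in the style of \cites{Soumalya-Arnab1,Soumalya-Arnab2}. For $\mathfrak a=\sum_t\delta_t a_t\in\mathcal L^{S,\beta,\gamma}_1$ we have $L_A(a_t)\le 1$ for every $t$, and $\|a_t\|\le e^{-\gamma\ell(t)^{\beta}}$ for $t\neq e$. Fix $\delta>0$. The idea is to use three different treatments of the coefficients according to the length of $t$.

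\emph{Step 1 (tail).} Using the estimate \eqref{tail estimation}, choose $R=R(\delta)$ minimal with $\sum_{n\ge\lfloor R\rfloor}e^{-\gamma n^{\beta}+Cn^{\alpha}}<\delta/3$. Then the part of $\mathfrak a$ supported on $\ell(t)>R$ has reduced norm $<\delta/3$, and can be approximated by $0$.

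\emph{Step 2 (small coefficients).} Put $\lambda_R=|\{\ell(t)\le R\}|\le e^{CR^{\alpha}}$. Any $t$ with $\ell(t)\le R$ and $e^{-\gamma\ell(t)^\beta}<\delta/(3\lambda_R)$ also has its coefficient approximated by $0$. The total error from these $t$ is at most $\delta/3$.

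\emph{Step 3 (remaining coefficients).} What remains are the $t$ with $\ell(t)\le r$, where $r^{\beta}=\gamma^{-1}\log(3\lambda_R/\delta)$. For each such $t$, write $a_t=(a_t-\psi(a_t)1)+\psi(a_t)1$. The first summand lies in the unit $L_A$-ball, so it is $\frac{\delta}{3\lambda_R}$-approximated from a subspace $Z\subseteq A$ with $\dim Z=D(\mathcal L^{A}_1,\delta/(3\lambda_R))$, which we may take to contain $1_A$. Then $\mathcal L^{S,\beta,\gamma}_1\subseteq_{\delta}\bigoplus_{\ell(t)\le r}\delta_t Z$, and hence
\[
D(\mathcal L^{S,\beta,\gamma}_1,\delta)\le \lambda_r\, D\big(\mathcal L^{A}_1,\delta/(3\lambda_R)\big).
\]
Given $\varepsilon>0$, for small $\delta$ the definition of $\mu$ gives $D(\mathcal L^A_1,\eta)\le\eta^{-(\mu+\varepsilon)}$. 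Taking logarithms therefore yields
\[
\frac{\log D(\mathcal L^{S,\beta,\gamma}_1,\delta)}{\log\delta^{-1}}\le \frac{Cr^{\alpha}}{\log\delta^{-1}}+(\mu+\varepsilon)\Big(1+\frac{CR^{\alpha}+\log 3}{\log\delta^{-1}}\Big).
\]

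\emph{Step 4 (asymptotics of $R$ and $r$ by cases).} If $\beta>\alpha$, then $R^{\beta}\lesssim\gamma'^{-1}\log\delta^{-1}$, so $R^{\alpha}=o(\log\delta^{-1})$, and likewise $r^{\alpha}=o(\log\delta^{-1})$. The $\limsup$ is then $\le\mu$.

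If $\beta=\alpha=1$, the tail is geometric, $\sim e^{-(\gamma-C)R}/(1-e^{-(\gamma-C)})$, so $R\le\frac{\log\delta^{-1}}{\gamma-C}+O(1)$. Then
\[
r\le \gamma^{-1}\Big(1+\frac{C}{\gamma-C}\Big)\log\delta^{-1}+O(1)=\frac{\log\delta^{-1}}{\gamma-C}+O(1).
\]
The two terms give $\frac{C}{\gamma-C}+\mu\frac{\gamma}{\gamma-C}=\frac{C+\mu\gamma}{\gamma-C}$.

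If $\beta=\alpha<1$, the tail $\sum_{n\ge R}e^{-(\gamma-C)n^{\alpha}}$ is no longer geometric. Here I would bound it crudely, via Lemma \ref{threshold} or the integral comparison used in Lemma \ref{estimate1}, by something like $e^{-(\gamma-C)R^{\alpha}}$ times a factor that is itself controlled by $e^{CR^\alpha}$. This gives $R^{\alpha}\le\frac{\log\delta^{-1}}{\gamma-C}$ up to lower-order terms, but the crude handling costs an additional $\frac{C}{\gamma-C}$ in each of the two contributions. That accounts for the weaker constant $\frac{2C+\mu(\gamma+C)}{\gamma-C}$.

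Letting $\varepsilon\to0$ then finishes all three cases. The main obstacle is Step 4 in the subexponential case $\beta=\alpha<1$: one has to control the polynomial prefactors coming from $\int_R^\infty e^{-(\gamma-C)x^{\alpha}}dx\approx R^{1-\alpha}e^{-(\gamma-C)R^{\alpha}}$ (and the rounding $\lfloor R\rfloor$) uniformly enough. I would first try absorbing them into a factor $e^{CR^{\alpha}}$, which is exactly what produces the stated, non-optimal bound.
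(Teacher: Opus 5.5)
\textbf{Overall.} Your Steps 1--3 are correct, and so is Step 4 for $\beta>\alpha$ and for $\beta=\alpha=1$. Your scheme is a refinement of the paper's. The paper uses a single radius $N$ (Lemma \ref{threshold} with $\varepsilon=\delta/2$) and approximates every coefficient with $\ell(t)\le N$ to precision $\delta/(2\lambda_N)$. This gives $D(\mathcal L^{S,\beta,\gamma}_1,\delta)\le\lambda_N D(\mathcal L_1,\delta/(2\lambda_N))$, hence $\limsup\le\lambda_0+\mu(1+\lambda_0)$ with $\lambda_0=\limsup_{\delta\to0}\log\lambda_N/\log\delta^{-1}$. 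Your middle tier decouples the number of group elements used ($\lambda_r$) from the precision ($\delta/(3\lambda_R)$). A small side point: any $Z$ with $\mathcal L^A_1\subseteq_\eta Z$ automatically contains $1_A$, since $\mathbb C1_A\subseteq\mathcal L^A_1$ and $Z$ is closed. So splitting off $\psi(a_t)1_A$ is unnecessary.

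\textbf{The gap: Step 4 for $\beta=\alpha<1$.} The mechanism you propose does not give the stated bound, and your accounting of the constant is not derived from anything.
\begin{itemize}
\item Bounding the tail by $e^{-(\gamma-C)R^\alpha}$ times a factor $\le e^{CR^\alpha}$ gives $e^{-(\gamma-2C)R^\alpha}$. For $C<\gamma\le 2C$ this is $\ge 1$, so it yields no threshold at all.
\item For $\gamma>2C$ it forces $R^\alpha\approx\log\delta^{-1}/(\gamma-2C)$. The resulting bound blows up as $\gamma\downarrow 2C$; already its $\mu$-free part is $\frac{C(\gamma-C)}{\gamma(\gamma-2C)}$.
\item It also contradicts your next sentence, $R^\alpha\le\log\delta^{-1}/(\gamma-C)$ up to lower order. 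That estimate is true, and if you use it, your Step 3 inequality becomes verbatim the $\alpha=1$ computation. It gives $\frac{C+\mu\gamma}{\gamma-C}$, with no extra $\frac{C}{\gamma-C}$ anywhere.
\end{itemize}

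\textbf{How to fix it.} Justify the true estimate directly. The prefactor $R^{1-\alpha}=(R^\alpha)^{(1-\alpha)/\alpha}$ is polynomial in $R^\alpha$. So for each $\epsilon'\in(0,\gamma-C)$ the tail is $O_{\epsilon'}\big(e^{-(\gamma-C-\epsilon')R^\alpha}\big)$, whence $R^\alpha\le\frac{\log\delta^{-1}}{\gamma-C-\epsilon'}+O_{\epsilon'}(1)$. Letting $\epsilon'\to0$ gives $\frac{C+\mu\gamma}{\gamma-C}$, which is $\le\frac{2C+\mu(\gamma+C)}{\gamma-C}$.

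Alternatively, use Lemma \ref{threshold} as stated, which is exactly the case $\epsilon'=\frac{\gamma-C}{2}$. It gives $R^\alpha\le\frac{2}{\gamma-C}\log\delta^{-1}+O(1)$, hence $r^\alpha\le\frac{\gamma+C}{\gamma(\gamma-C)}\log\delta^{-1}+O(1)$. Your inequality then gives $\frac{C(\gamma+C)}{\gamma(\gamma-C)}+\mu\frac{\gamma+C}{\gamma-C}$, again below the stated bound because $C<\gamma$.

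\textbf{Where the paper's constant comes from.} The weaker constant in the statement comes from this factor $2$ in Lemma \ref{threshold}, fed into the paper's single-radius scheme (so $\lambda_0\le\frac{2C}{\gamma-C}$). It does not come from absorbing prefactors into $e^{CR^\alpha}$. With the sharp threshold, even the paper's two-tier scheme gives $\frac{C+\mu\gamma}{\gamma-C}$. So your middle tier only buys something when the threshold is crude.
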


\begin{proof}
First let us assume $0 < \alpha < 1.$ Fix some arbitrary $\delta>0$. In the following we denote the unit Lip-ball of $A$ by $\mathcal L_{1}$ i.e. $\mathcal{L}_1=\{a\in A:L_A(a)\leq 1\}$. Then for $\alpha \leq \beta < 1$ and $\gamma > C,$ for any $\mathfrak{a}=\sum\limits_{t\in\Gamma}\delta_ta_{t}\in C_{c}(\Gamma,\mathcal A,\sigma)$ such that $\mathfrak{a}\in( \mathcal{L}^{S,\beta,\gamma}_{\ell})_1$, applying the similar argument as in \eqref{tail estimation}, there exists a number $N$ such that $\left \|\sum\limits_{\ell(t)\geq N} \delta_t a_t\right\|_{\mathrm{red}}<\frac{\delta}{2}$. By virtue of Lemma \ref{threshold}, such a threshold $N$ is given by $$N = \left \lceil \left (\frac {2} {\eta} \log \left (\frac {2K} {\delta} \right ) \right )^{\frac {1} {\beta}} \right \rceil,$$ where $\eta = \gamma - C$ and $K = \frac{2}{\eta \beta} \left( \frac{2(1-\beta)}{e \eta \beta} \right)^{\frac{1-\beta}{\beta}}.$ Let $\lambda_N : = \left \lvert \left \{t \in \Gamma\ :\ \ell (t) \leq N \right \} \right \rvert.$ Get hold of a finite dimensional subspace $Y$ of $A$ such that $$\dim (Y) = D \left (\mathcal L_1, \frac {\delta} {2\lambda_N} \right ) \quad \mathrm{and}\ \quad \mathcal L_1 \subseteq_{\frac {\delta} {2\lambda_N}} Y.$$ Let $Y = \mathrm {span} \left \{y_1, \cdots, y_q \right \}.$ Define $$U_N : = \mathrm {span} \left \{\delta_t y_j\ :\ \ell (t) \leq N, 1 \leq j \leq q \right \}.$$ By definition of the Lip-norm $L^{S,\beta,\gamma}_{\ell}$, each $a_t$ is in $\mathcal L_1.$ So for each $a_t,$ get hold of some $y_t \in Y,$ such that $\left \|a_t - y_t \right \| < \frac {\delta} {2\lambda_N}.$ Then we have
\Bea
\left \|\sum\limits_{t \in \Gamma} \delta_t a_t - \sum\limits_{\ell(t)\leq N } \delta_t y_t \right \|_{\mathrm {red}} & \leq & \left \|\sum\limits_{t \in \Gamma} \delta_t a_t - \sum\limits_{\ell (t) \leq N} \delta_t a_t \right \|_{\mathrm {red}} + \left \|\sum\limits_{\ell (t) \leq N} \delta_t a_t - \sum\limits_{\ell (t) \leq N} \delta_t y_t \right \|_{\mathrm {red}} \\ & < & \frac {\delta} {2} + \sum\limits_{\ell (t) \leq N} \left \|a_t - y_t \right \| \\ & < & \frac {\delta} {2} + \frac {\delta} {2} \\ & = & \delta.  
\Eea
This shows that $$D \left (\mathcal L_1^{S, \beta, \gamma}, \delta \right ) \leq \lambda_N q = \lambda_N D \left (\mathcal L_1, \frac {\delta} {2\lambda_N} \right ).$$ Taking logarithms and dividing by $\log \delta^{-1} > 0$ yields
$$\frac {\log D \left (\mathcal L_1^{S, \beta, \gamma}, \delta \right )} {\log \delta^{-1}} \leq \frac {\log \lambda_N} {\log \delta^{-1}} + \frac {\log D \left (\mathcal L_1, \frac {\delta} {2\lambda_N} \right )} {\log \delta^{-1}}.$$
Let $\lambda_0 = \limsup\limits_{\delta \to 0^{+}} \frac {\log \lambda_N} {\log \delta^{-1}}.$ To evaluate the second term, set $x = \frac {\delta} {2\lambda_N}.$ As $\delta \to 0^{+},$ $N \to \infty,$ which forces $\lambda_N \to \infty$ and $x \to 0^{+}.$ We rewrite the second term as follows
$$\frac {\log D \left (\mathcal L_1, x \right )} {\log \delta^{-1}} = \frac {\log D \left (\mathcal L_1, x \right )} {\log x^{-1}} \left( 1 + \frac {\log 2 + \log \lambda_N} {\log \delta^{-1}} \right).$$
Taking the limit supremum as $\delta \to 0^{+},$ this converges to $\mu(1 + \lambda_0).$ Thus, we obtain
\begin{equation} \label{main inequality} \limsup\limits_{\delta \to 0^{+}} \frac {\log D \left (\mathcal L_1^{S, \beta, \gamma}, \delta \right )} {\log \delta^{-1}} \leq \lambda_0 + \mu(1 + \lambda_0) \end{equation}
To bound $\lambda_0,$ we use $\lambda_N \leq e^{C N^{\alpha}},$ which gives $\log \lambda_N \leq C N^{\alpha}.$ From the formula of threshold, $N < \left (\frac {2} {\gamma - C} (\log(2K) + \log \delta^{-1}) \right )^{\frac {1} {\beta}} + 1.$ Setting $t = \log \delta^{-1},$ we get
$$\lambda_0 \leq \limsup\limits_{t \to \infty} \frac {C \left[ \left (\frac {2} {\gamma - C} (t + \log(2K)) \right )^{\frac {1} {\beta}} + 1 \right]^{\alpha}} {t}.$$
Factoring out $t^{\frac {1} {\beta}}$ yields
$$\lambda_0 \leq C \limsup\limits_{t \to \infty} t^{\frac {\alpha} {\beta} - 1} \left[ \left (\frac {2} {\gamma - C} \left( 1 + \frac {\log(2K)} {t} \right) \right )^{\frac {1} {\beta}} + t^{-\frac {1} {\beta}} \right]^{\alpha}.$$
As $t \to \infty,$ the bracketed term converges to $\left( \frac {2} {\gamma - C} \right )^{\frac {\alpha} {\beta}}.$ Since $0 < \alpha \leq \beta < 1,$ we consider two cases for the exponent $\frac {\alpha} {\beta} - 1$:

\textbf{Case 1: $0 < \alpha < \beta$} \\
Here, $\frac {\alpha} {\beta} - 1 < 0,$ so that $t^{\frac {\alpha} {\beta} - 1} \to 0.$ Thus, $\lambda_0 = 0.$ Substituting into the \eqref{main inequality} yields
$$\limsup\limits_{\delta \to 0^{+}} \frac {\log D \left (\mathcal L_1^{S, \beta, \gamma}, \delta \right )} {\log \delta^{-1}} \leq \mu.$$

\textbf{Case 2: $\alpha = \beta$} \\
Here, $\frac {\alpha} {\beta} - 1 = 0,$ and hence the limit evaluates to $\lambda_0 \leq \frac {2C} {\gamma - C}.$ Substituting this into the \eqref{main inequality} yields
$$\limsup\limits_{\delta \to 0^{+}} \frac {\log D \left (\mathcal L_1^{S, \beta, \gamma}, \delta \right )} {\log \delta^{-1}} \leq \left( \frac {2C} {\gamma - C} \right) + \mu \left( 1 + \frac {2C} {\gamma - C} \right) = \frac {2C + \mu(\gamma + C)} {\gamma - C}.$$
The threshold $N$ for $\beta \geq 1$ is given by $$N = \left\lceil \left(\frac {1} {\eta} \log \left (\frac {1} {\eta \beta \varepsilon} \right) \right )^{\frac{1}{\beta}} \right \rceil.$$ Thus when $0 < \alpha < 1 \leq \beta$ or $1 = \alpha \leq \beta,$ using the same technique as above, we get $$\limsup\limits_{\delta \to 0^{+}} \frac {\log D \left (\mathcal L_1^{S, \beta, \gamma}, \delta \right )} {\log \delta^{-1}} \leq \mu,$$ when $\beta > \alpha,$ and, $$\limsup\limits_{\delta \to 0^{+}} \frac {\log D \left (\mathcal L_1^{S, \beta, \gamma}, \delta \right )} {\log \delta^{-1}} \leq \frac {C + \mu \gamma} {\gamma - C},$$ when $\beta = \alpha.$
Thus we have $$\mathrm {Mdim}_{L_{\ell}^{S, \beta, \gamma}} \left (A \rtimes_{r, \rho, \sigma} \Gamma \right ) \leq \begin{cases} \mu, \quad \mathrm{if}\ \beta > \alpha, \\ \frac {2 C + \mu (\gamma + C)} {\gamma - C}, \quad \mathrm{if}\ \beta = \alpha\ \mathrm{and}\ 0 < \alpha < 1, \\ \frac {C + \mu \gamma} {\gamma - C}, \quad \mathrm{if}\ \beta = \alpha = 1. \end{cases}$$
This completes the proof.

\end{proof}
Now to obtain lower bounds of metric dimension, we further assume that the growth function $\lambda$ of $\Gamma$ satisfies $e^{C^{\prime}n^{\alpha}}\leq\lambda(n)\leq e^{Cn^{\alpha}}$ for some positive constants $C,C^{\prime}$. This kind of condition is satisfied by large family of discrete groups of subexponential and exponential growths (\cite{Bridson-Int-Growth}).

\begin{thm}
Retaining all the notations of the previous theorem, we get the following lower bound of the metric dimension: 
\begin{displaymath}\mathrm{Mdim}_{L_{\ell}^{S, \beta, \gamma}} \left (A \rtimes_{r, \rho, \sigma} \Gamma \right ) \geq \begin{cases} \max \left \{\mathrm{Mdim}_{L_A} (A),\frac {C^{\prime}} {2 \gamma} \right \}, \quad \mathrm{if}\ \beta = \alpha, \\ \mathrm{Mdim}_{L_A} (A), \quad \mathrm{if}\ \beta > \alpha. \end{cases}\end{displaymath}

\end{thm}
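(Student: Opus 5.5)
The plan is to prove the two lower bounds separately. The first, $\mathrm{Mdim}_{L_{\ell}^{S,\beta,\gamma}}(A\rtimes_{r,\rho,\sigma}\Gamma)\geq \mu=\mathrm{Mdim}_{L_A}(A)$ for all $\beta\geq\alpha$, comes from projecting onto the base algebra. The second, the growth contribution when $\beta=\alpha$, comes from a Hilbert space estimate on elements with scalar Fourier coefficients.

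\textbf{Step 1: the base contribution.} Let $E : A_{\sigma}\rightarrow A$ be the canonical conditional expectation; it is a norm contraction. For $a\in A$ with $L_A(a)\leq 1$, the element $\delta_e a$ satisfies $L_{\ell}^{S,\beta,\gamma}(\delta_e a)=L_A(a)\leq 1$, so $\delta_e a\in\mathcal L_1^{S,\beta,\gamma}$. Suppose $Z\in\mathcal F(A_\sigma)$ satisfies $\mathcal L_1^{S,\beta,\gamma}\subseteq_\delta Z$. Then for each such $a$ there is $z\in Z$ with $\|a-E(z)\|=\|E(\delta_e a - z)\|\leq\|\delta_e a-z\|<\delta$. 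Hence $\mathcal L_1\subseteq_\delta E(Z)$, and $\dim E(Z)\leq \dim Z$. This gives $D(\mathcal L_1,\delta)\leq D(\mathcal L_1^{S,\beta,\gamma},\delta)$ for every $\delta$, and taking $\limsup$ yields the bound $\mu$ in both cases.

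\textbf{Step 2: the growth contribution for $\beta=\alpha$.} Fix a state $\psi$ on $A$ and put $\tau=\psi\circ E$. Let $\Lambda:A_\sigma\rightarrow L^2(A_\sigma,\tau)$ be the GNS map; it is contractive, since $\|\Lambda(x)\|_2^2=\tau(x^*x)\leq\|x\|^2$. For elements with scalar coefficients, $x=\sum_{t}\delta_t c_t 1_A$, the cocycle is unimodular, so $E(x^*x)=\sum_t|c_t|^2 1_A$ and therefore $\|\Lambda(x)\|_2=(\sum_t|c_t|^2)^{1/2}$. As in Step 1, if $\mathcal L_1^{S,\beta,\gamma}\subseteq_\delta Z$ then $\Lambda(\mathcal L_1^{S,\beta,\gamma})\subseteq_\delta\Lambda(Z)$ in the Hilbert space. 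Consequently the Kerr covering number computed in $L^2$ bounds $D(\mathcal L_1^{S,\beta,\gamma},\delta)$ from below.

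\textbf{Step 3: the Hilbert space ball.} Let $B_n=\{t\neq e:\ell(t)\leq n\}$, so that $|B_n|=\lambda(n)-1$. Consider the set of elements $x=\sum_{t\in B_n}\delta_t c_t1_A$ with $\sum|c_t|^2\leq e^{-2\gamma n^{\alpha}}$. For these, $L_A(c_t1_A)=0$ and $\sum e^{2\gamma\ell(t)^{\alpha}}|c_t|^2\leq e^{2\gamma n^\alpha}\sum|c_t|^2\leq 1$, so all of them lie in $\mathcal L_1^{S,\alpha,\gamma}$. Under $\Lambda$ they fill a closed Euclidean ball of radius $r_n=e^{-\gamma n^\alpha}$ in an $(\lambda(n)-1)$-dimensional subspace $H_n$. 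If a subspace $W$ with $\dim W<\dim H_n$ $\delta$-covered this ball, then $P_{H_n}W\neq H_n$. We could then pick a vector of length $r_n$ in $H_n$ orthogonal to $P_{H_n}W$; its distance to $W$ is at least $r_n$. So for $\delta<r_n$ we get $D\geq\lambda(n)-1\geq e^{C'n^\alpha}-1$.

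Now choose $\delta_n=e^{-\gamma n^\alpha}/2\to0$. Then
\[
\frac{\log D(\mathcal L_1^{S,\alpha,\gamma},\delta_n)}{\log\delta_n^{-1}}\geq\frac{\log(e^{C'n^\alpha}-1)}{\gamma n^\alpha+\log 2}\longrightarrow\frac{C'}{\gamma}\geq\frac{C'}{2\gamma}.
\]
Combining this with Step 1 gives the maximum in the case $\beta=\alpha$.

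The main obstacle is Step 2, namely justifying that Kerr's covering quantity may be transferred to the Hilbert space through the contraction $\Lambda$, and verifying the $\ell^2$ identity for scalar-coefficient elements in the twisted setting. The latter uses $\sigma$ being $\mathbb T$-valued and $E(\delta_s^*\delta_t)=0$ for $s\neq t$. The rest of the argument is routine.
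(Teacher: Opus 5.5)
Your proof is correct. It has the same skeleton as the paper's: the base contribution comes from $\delta_e A$, and the growth contribution comes from the GNS space of $\tau=\psi\circ E$, where the vectors coming from $\delta_t 1$ are orthonormal. The key covering estimate, however, is done differently.

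The paper uses only the scaled orthonormal family $\delta U_\delta$, with $U_\delta=\{\delta_s1:\ell(s)\le(\log\delta^{-1}/2\gamma)^{1/\beta}\}$. It then invokes Voiculescu's lemma: a subspace that $\tfrac12$-approximates $m$ orthonormal vectors has dimension at least $\tfrac34 m$. You instead use the $\ell^2$ form of $L^{\beta,\gamma}_\ell$ to place a whole Euclidean ball of radius $e^{-\gamma n^\alpha}$ inside $\mathcal L^{S,\alpha,\gamma}_1$. You then use the elementary fact that a subspace of smaller dimension admits a full-length vector orthogonal to it.

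What your route buys:
\begin{itemize}
\item It avoids the external lemma and loses no factor $\tfrac34$.
\item It yields the sharper bound $C'/\gamma$. The paper's $2\gamma$ is only an artifact of its choice of radius: it needs $\delta e^{\gamma N^\beta}\le 1$, but its choice of $N$ gives $\delta^{1/2}$.
\item It does not need $\psi$ to be faithful.
\end{itemize}

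Your Step 1, together with the contraction remark in Step 2, also makes explicit something the paper passes over. The quantity $D(\cdot,\delta)$ depends on the ambient space. So comparing $D(\mathcal L_1,\delta)$ computed in $A$ with covering numbers computed in $A\rtimes_{r,\rho,\sigma}\Gamma$ requires a contraction such as $E$. Likewise, passing from $U_\delta$ to $\pi_\tau(U_\delta)\xi_\tau$ requires the contraction $x\mapsto\pi_\tau(x)\xi_\tau$. The paper justifies these steps only by saying that $A$ is faithfully embedded as $\delta_e A$, which is not enough on its own.
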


\begin{proof}
Define a state $\tau$ on $B = A \rtimes_{r, \rho, \sigma} \Gamma$ by $\tau = \varphi \circ E$ where $\tau$ is some faithful state on $A$ and $E:A\rtimes_{r,\rho,\sigma}\Gamma\rightarrow A$ is the canonical conditional expectation. Let $\left (\pi_{\tau}, \mathcal H_{\tau}, \xi_{\tau} \right )$ be the associated GNS representation of $B.$ Then for $t, s \in \Gamma$ with $t \neq s$ we have
\Bea
\left \langle \pi_{\tau} \left (\delta_t 1 \right ) \xi_{\tau}, \pi_{\tau} \left (\delta_s 1 \right ) \xi_{\tau} \right \rangle & = & \tau \left (\left (\delta_t 1 \right )^{\ast} \left (\delta_s 1 \right ) \right ) \\ & = & \tau \left (\delta_{t^{-1} s} \overline {\sigma (t, t^{-1})} {\sigma (t^{-1}, s)} \right ) \\ & = & \begin{cases} \varphi (1),\ \quad \mathrm{if}\ t = s, \\ 0, \quad \mathrm{otherwise}. \end{cases} \\ & = & \begin{cases} 1, \quad \mathrm{if}\ t = s, \\ 0, \quad \mathrm{otherwise}. \end{cases}
\Eea
Consider the set $$U_{\delta} : = \left \{\delta_s 1\ :\ \ell (s) \leq \left (\frac {\log \delta^{-1}} {2 \gamma} \right )^{\frac {1} {\beta}} \right \}.$$ Then the above computation shows that the set $\pi_{\tau} \left (U_{\delta} \right ) \xi_{\tau}$ is a finite set of orthonomal vectors in $\mathcal H_{\tau}.$ Therefore by virtue of Voiculescu's lemma \cite{Voiculescu-Entp}*{Lemma 7.8} it follows that $$D \left (\pi_{\tau} \left (U_{\delta} \right ) \xi_{\tau}, \frac {1} {2} \right ) \geq \frac {3} {4} \left \lvert U_{\delta} \right \rvert \geq  \frac {3} {4} e^{C' {N_{\beta, \gamma, \delta}}^{\alpha}},$$ 
where $N_{\beta, \gamma, \delta} : = \left (\frac {\log \delta^{-1}} {2  \gamma} \right )^{\frac {1} {\beta}}.$ Also note that $\delta U_{\delta} \subseteq \mathcal L^{S, \beta, \gamma}_{1}.$ Thus we have
\Bea
D \left (\mathcal L^{S, \beta, \gamma}_{1}, \frac {\delta} {2} \right ) & \geq & D \left (\delta U_{\delta}, \frac {\delta} {2} \right ) \\ & = & D \left (U_{\delta}, \frac {1} {2} \right ) \\ & \geq & D \left (\pi_{\tau} \left (U_{\delta} \right ) \xi_{\tau}, \frac {1} {2} \right ) \\ & \geq & \frac {3} {4} e^{C' {N_{\beta, \gamma, \delta}}^{\alpha}}.
\Eea
Taking logarithm and dividing both sides by $\log \delta^{-1}$ yields
\Bea
\frac {\log D \left (\mathcal L^{S, \beta, \gamma}_{1}, \frac {\delta} {2} \right )} {\log \delta^{-1}} & \geq & \frac {\log \frac {3} {4}} {\log \delta^{-1}} + C' \frac {N_{\beta, \gamma, \delta}^{\alpha}} {\log \delta^{-1}} \\ & = & \frac {\log \frac {3} {4}} {\log \delta^{-1}} + \frac {C'} {\left (2 \gamma \right )^{\frac {\alpha} {\beta}}} \left (\log \delta^{-1} \right )^{\frac {\alpha} {\beta} - 1} 
\Eea
Finally, letting $\delta \to 0^{+},$ it follows that 
$$\mathrm {Mdim}_{L_{\ell}^{S, \beta, \gamma}} \left (A \rtimes_{r, \rho, \sigma} \Gamma \right ) = \limsup\limits_{\delta \to 0^{+}} \frac {\log D \left (\mathcal L^{S, \beta, \gamma}_{1}, \frac {\delta} {2} \right )} {\log \delta^{-1}} \geq \begin{cases} \frac {C'} {2 \gamma}, \quad \mathrm{if}\ \beta = \alpha, \\ 0, \quad \mathrm{if}\ \beta > \alpha. \end{cases}$$
Now note that for any $a \in \mathcal L_1,$ we have $\delta_e a \in \mathcal L^{S, \beta, \gamma}_{1}.$ Since $A$ is faithfully embedded in $A \rtimes_{r, \rho, \sigma} \Gamma$ as $\delta_e A,$ it follows that $$D \left (\mathcal L^{S, \beta, \gamma}_{1}, \delta \right ) \geq D \left (\mathcal L_1, \delta \right ),$$ for any $\delta > 0.$ Dividing both sides by $\log \delta^{-1}$ and letting $\delta \to 0^{+},$ it follows that
$$\mathrm{Mdim}_{L_{\ell}^{S, \beta, \gamma}} \left (A \rtimes_{r, \rho, \sigma} \Gamma \right ) \geq \mathrm{Mdim}_{L_A} (A).$$ Thus we have
$$\mathrm{Mdim}_{L_{\ell}^{S, \beta, \gamma}} \left (A \rtimes_{r, \rho, \sigma} \Gamma \right ) \geq \begin{cases} \max \left \{\mathrm{Mdim}_{L_A} (A),\frac {C'} {2 \gamma} \right \}, \quad \mathrm{if}\ \beta = \alpha, \\ \mathrm{Mdim}_{L_A} (A), \quad \mathrm{if}\ \beta > \alpha. \end{cases}$$
This completes the proof.
\end{proof}
\begin{cor}\label{Mdim_greaterthancritical}
    Retaining all the notations as before, for any parameter $\beta>\alpha$,
    \begin{displaymath}
    \mathrm{Mdim}_{L_{\ell}^{S, \beta, \gamma}} \left (A \rtimes_{r, \rho, \sigma} \Gamma \right )=\mathrm{Mdim}_{L_A}(A)    
    \end{displaymath}
    for all $\gamma>C$. In particular, when the base $C^{\ast}$-algebra $A$ is $C(X)$ for some compact metric space $(X,d)$, for all $\beta>\alpha$ and $\gamma>C$, 
     \begin{displaymath}
    \mathrm{Mdim}_{L_{\ell}^{S, \beta, \gamma}} \left (A \rtimes_{r, \rho, \sigma} \Gamma \right )=\mathrm{Kol}(X,d),    
    \end{displaymath}
    where $\mathrm{Kol}(X,d)$ denotes the Kolmogorov dimension of $(X,d)$.
\end{cor}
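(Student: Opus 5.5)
The plan is to sandwich the metric dimension between the upper bound of Theorem \ref{Mdimupper} and the lower bound of the preceding theorem, in the regime $\beta>\alpha$.

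\textbf{Upper bound.} For $\beta>\alpha$, Theorem \ref{Mdimupper} gives $\mathrm{Mdim}_{L_{\ell}^{S,\beta,\gamma}}(A\rtimes_{r,\rho,\sigma}\Gamma)\leq \mu=\mathrm{Mdim}_{L_A}(A)$ in every subcase ($\alpha<1$, or $\alpha=1\le\beta$). This only uses the upper growth bound $\lambda(n)\le e^{Cn^\alpha}$ and $\gamma>C$. The mechanism is that the exponent $t^{\alpha/\beta-1}$ tends to $0$, so $\lambda_0=0$.

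\textbf{Lower bound.} The lower bound $\mathrm{Mdim}_{L_{\ell}^{S,\beta,\gamma}}\geq \mathrm{Mdim}_{L_A}(A)$ does not need the lower growth hypothesis $e^{C'n^\alpha}\le\lambda(n)$. It comes only from the isometric embedding $a\mapsto \delta_e a$. We have $L^{S,\beta,\gamma}_\ell(\delta_e a)=L_A(a)$, so $\delta_e\mathcal L_1\subseteq \mathcal L^{S,\beta,\gamma}_1$. We then need $D(\mathcal L^{S,\beta,\gamma}_1,\delta)\ge D(\mathcal L_1,\delta)$, where the right side is computed in $A$.

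This is the one point needing care. An approximating finite-dimensional subspace $Z$ of the crossed product must be converted into one inside $A$. Apply the conditional expectation $E$, which is contractive and satisfies $E(\delta_e a)=a$. Then $\|a-E(z)\|\le\|\delta_e a-z\|<\delta$, and $\dim E(Z)\le\dim Z$. Hence $\mathcal L_1\subseteq_\delta E(Z)$ and the inequality follows. Dividing by $\log\delta^{-1}$ and taking $\limsup$ gives the lower bound. So equality holds for all $\gamma>C$ and $\beta>\alpha$.

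\textbf{Commutative case.} For $A=C(X)$ with $L_A$ the Lipschitz seminorm of $d$, it remains to identify $\mathrm{Mdim}_{L_A}(C(X))$ with $\mathrm{Kol}(X,d)$. This is Kerr's result that metric dimension recovers the Kolmogorov (upper box-counting) dimension for classical compact metric spaces. I would cite it from \cite{Kerr}. The only check is that our convention for $\mathcal L_1$ (the full Lip-ball rather than Kerr's normalized ball) does not change the value. Since $\mathcal L_1=\mathcal L_1\cap\{\|\cdot\|\le r\}+\mathbb C1$ after subtracting a state, this only adds one dimension to $D$, which is harmless after taking logarithms.

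I expect this convention check, together with making sure the Lip-isometry hypothesis holds for the action considered, to be the only real obstacle. Everything else is a direct combination of the two preceding theorems.
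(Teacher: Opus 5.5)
Your proposal is correct and is essentially the paper's own argument, which the paper leaves implicit. The corollary is obtained by sandwiching the metric dimension between the bound $\mu$ from Theorem \ref{Mdimupper} and the lower bound coming from the embedding $a\mapsto\delta_e a$. One then invokes Kerr's identification of $\mathrm{Mdim}$ with $\mathrm{Kol}(X,d)$ for $C(X)$.

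Your use of the contractive conditional expectation $E$ to push an approximating subspace of the crossed product back into $A$ actually justifies the inequality $D(\mathcal L^{S,\beta,\gamma}_1,\delta)\geq D(\mathcal L_1,\delta)$. The paper merely asserts this from the faithful embedding. You are also right that the lower growth bound $e^{C'n^{\alpha}}\leq\lambda(n)$ is not needed when $\beta>\alpha$.
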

When the base algebra $A$ is $C(X)$ for some classical compact metric space then adapting the arguments in \cite{Soumalya-Arnab2}*{Theorem 3.11}, one can improve the lower bound of the metric dimension at the critical parameter $\beta=\alpha$. We state the improved lower bound without proof.

\begin{lem}\label{improvedlowerbound}
    With all the notations as used earlier, when $A$ is $C(X)$ for some classical, compact metric space $(X,d)$, then for $\beta=\alpha$ and all $\gamma>C$,
    \begin{displaymath}
    \mathrm{Mdim}_{L_{\ell}^{S, \beta, \gamma}} \left (A \rtimes_{r, \rho, \sigma} \Gamma \right )\geq \mathrm{Kol}(X,d)+\frac{C^{\prime}}{\gamma}.\end{displaymath}
\end{lem}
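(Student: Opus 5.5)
Following \cite{Soumalya-Arnab2}*{Theorem 3.11}, the plan is to build a single large family of well-separated elements of $\mathcal L^{S,\alpha,\gamma}_1$. Its cardinality should be, up to polynomial factors, the product of a covering number of $X$ and an exponential in $\log\delta^{-1}$ coming from the group. Then we take logarithms; the two contributions add instead of competing through a $\max$.

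Fix $\delta>0$ and let $N=N_\delta:=\big(\log\delta^{-1}/\gamma\big)^{1/\alpha}$, so that $e^{\gamma\ell(s)^{\alpha}}\delta\le 1$ whenever $\ell(s)\le N$. Put $S_\delta:=\{s\in\Gamma:0<\ell(s)\le N\}$; the lower growth assumption gives $|S_\delta|\gtrsim e^{C'N^{\alpha}}=\delta^{-C'/\gamma}$.

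Next, choose a maximal $r$-separated set $\{x_1,\dots,x_m\}\subseteq X$ with $r\approx\delta$. By the definition of Kolmogorov dimension, for a sequence of $\delta\to0$ we have $\log m/\log\delta^{-1}\to\mathrm{Kol}(X,d)$ (up to $o(1)$). Let $f_i$ be the tent functions $f_i(x)=\max\{0,r-d(x,x_i)\}$, rescaled so that $L_A(f_i)\le 1$, $\|f_i\|_\infty\approx\delta$ and the supports are pairwise disjoint. Consider the elements $\mathfrak a_{s,i}:=\delta_s\, c f_i$ for $s\in S_\delta$, $1\le i\le m$, with a fixed small constant $c$. Using $\|f_i\|\approx\delta$ and $\ell(s)\le N$, one checks $L^{S,\alpha,\gamma}_\ell(\mathfrak a_{s,i})\le e^{\gamma\ell(s)^\alpha}c\delta+c\le1$. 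Hence all $\mathfrak a_{s,i}$ lie in $\mathcal L^{S,\alpha,\gamma}_1$, and $D(\mathcal L^{S,\alpha,\gamma}_1,\eta)\ge D(\{\mathfrak a_{s,i}\},\eta)$.

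The key step, and the main obstacle, is to show that these $|S_\delta|\,m$ elements are ``linearly independent at scale $\delta$''. Concretely, $D(\{\mathfrak a_{s,i}\},c'\delta)\ge \kappa|S_\delta|m$ for a universal $\kappa>0$. My plan is to pass to a Hilbert space as in the proof of the preceding theorem. Take $\varphi$ to be a faithful state on $C(X)$ given by a full-support probability measure $\nu$, and form the GNS space of $\tau=\varphi\circ E$. The vectors $\delta_s f_i\xi_\tau$ are mutually orthogonal: different $s$ give orthogonality by the computation of the previous theorem, and equal $s$ with different $i$ give it by disjointness of supports. Their norms are $\|f_i\|_{L^2(\nu)}$, however, which can be much smaller than $\delta$. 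So I would not use $\tau$ directly. Instead I would use the states $\tau_i=\mathrm{ev}_{x_i}\circ E$, or a finite average of them, and work in the representation $\bigoplus_i \pi_{\tau_i}$. There $\delta_s f_i$ evaluated against $\xi_{\tau_i}$ has norm $\approx\delta$, while its pairing with the other pieces vanishes. After normalising by $\delta$, this produces an orthonormal family of size $|S_\delta|m$ in a Hilbert space via a contractive map. Voiculescu's lemma \cite{Voiculescu-Entp}*{Lemma 7.8} then gives $D\ge\frac34|S_\delta|m$ at scale $\delta/2$; contractivity of the representation preserves the inequality, exactly as in the preceding proof. Verifying the orthogonality carefully — the twisted multiplication and the action $\rho$ enter through $(\delta_s f_i)^*(\delta_{s'}f_j)$, and the isometric action must not move supports in a way that breaks disjointness under the chosen states — is where I expect most of the work.

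Finally, taking logarithms gives
\[
\frac{\log D(\mathcal L^{S,\alpha,\gamma}_1,\delta/2)}{\log\delta^{-1}}\ \ge\ \frac{\log\frac34}{\log\delta^{-1}}+\frac{\log m}{\log\delta^{-1}}+\frac{C'N_\delta^{\alpha}}{\log\delta^{-1}} .
\]
The last term equals $C'/\gamma$, because with $\beta=\alpha$ we have $N_\delta^\alpha=\log\delta^{-1}/\gamma$. Passing to the $\limsup$ along the sequence realising $\mathrm{Kol}(X,d)$ yields $\mathrm{Mdim}\ge\mathrm{Kol}(X,d)+C'/\gamma$. The replacement of $2\gamma$ by $\gamma$ relative to the previous theorem comes from choosing the radius $N_\delta$ with $\gamma$ instead of $2\gamma$. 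This is allowed because the coefficients are now bounded by $\delta$ in sup norm rather than by $\delta$ together with an $\ell^2$ sum. If the $\ell^2$ sum over all of $S_\delta$ obstructs this, note that each element $\mathfrak a_{s,i}$ is supported at a single group element, so no summation occurs and the bound $e^{\gamma\ell(s)^\alpha}\|cf_i\|\le c$ suffices.
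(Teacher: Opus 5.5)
Your key step does not just lack detail. It is false, because single spikes are too thin in the $X$-direction to detect $\mathrm{Kol}(X,d)$.

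The reason is that $\|\delta_s g\|=\|g\|_\infty$, and the $f_i$ have disjoint supports and a common height $h\approx\delta$. They therefore behave like the unit vectors of $\ell^\infty_m$, and these can be approximated within any fixed fraction $\epsilon$ of their norm by a subspace of dimension $k=O(\epsilon^{-2}\log m)$:
\begin{itemize}
\item Choose unit vectors $v_1,\dots,v_m\in\mathbb R^k$ with $|\langle v_i,v_j\rangle|\le\epsilon$ for $i\neq j$ (Johnson--Lindenstrauss).
\item Put $h_l=\sum_i (v_i)_l f_i$.
\item Then $\|\sum_l (v_j)_l h_l-f_j\|_\infty=\|\sum_{i\neq j}\langle v_i,v_j\rangle f_i\|_\infty\le\epsilon h$. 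This is Alon's perturbed-identity phenomenon.
\end{itemize}
The span of $\{\delta_s h_l: s\in S_\delta,\ l\le k\}$ then shows $D(\{\mathfrak a_{s,i}\},2\epsilon ch)\le|S_\delta|\,O(\epsilon^{-2}\log m)$. So $D\ge\kappa|S_\delta|m$ fails as soon as $m$ is large. Since $\log\log m/\log\delta^{-1}\to0$, your family can only ever produce the group contribution.

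The Hilbert space map you describe also does not exist. The map $x\mapsto\bigoplus_i\pi_{\tau_i}(x)\xi_{\tau_i}$ has norm $\sqrt m$ (evaluate at $x=1$). Its contractive normalisation, the GNS map of $\frac1m\sum_i\tau_i$, sends $\delta_s f_i$ to a vector of norm $h/\sqrt m$, which destroys the scale.

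The fix is to use sign combinations, as in Kerr's computation for $C(X)$, together with a single averaged state.
\begin{itemize}
\item Take tents of radius $r/2$, $f_i=\max\{0,\frac r2-d(\cdot,x_i)\}$. With radius $r$, as you wrote, the supports around $r$-separated points overlap.
\item Set $r=2\delta$ and $M=2^{\lfloor\log_2 m\rfloor}\ge m/2$.
\item Let $g_w=\sum_{i\le M}w_if_i$, with $w$ running over the rows of an $M\times M$ Sylvester--Hadamard matrix.
\end{itemize}
Disjointness and $r$-separation give $L_A(g_w)\le1$ and $\|g_w\|_\infty=\delta$, so $\delta_s g_w\in\mathcal L^{S,\alpha,\gamma}_1$ for every $s\in S_\delta$.

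Now let $\omega=\frac1M\sum_{i\le M}\mathrm{ev}_{x_i}\circ E$. With the paper's conventions, $(\delta_s a)^*(\delta_{s'}b)$ lies in the fibre over $s^{-1}s'$, and $(\delta_s a)^*(\delta_s b)=\delta_e\,a^*b$. The cocycle and the action cancel, so the support issue you anticipated never arises. Hence $\omega((\delta_s g_w)^*(\delta_{s'}g_{w'}))$ vanishes unless $s=s'$. When $s=s'$ it equals $\frac{\delta^2}{M}\sum_i w_iw'_i$, which is $\delta^2$ if $w=w'$ and $0$ otherwise.

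The GNS map of $\omega$ is a linear contraction, so Voiculescu's lemma gives $D(\mathcal L^{S,\alpha,\gamma}_1,\delta/2)\ge\frac34|S_\delta|M\ge\frac38|S_\delta|\,m$. Your final logarithmic computation then yields $\mathrm{Kol}(X,d)+C'/\gamma$ as stated. The paper states this lemma without proof, so this construction is exactly the ingredient that has to be supplied.
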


\begin{rem}\label{lowersemicontinuity}
    When the group $\Gamma$ is exact and the base algebra $A$ is $C(X)$ for some compact metric space $(X,d)$, Lemma \ref{improvedlowerbound} and Corollary \ref{Mdim_greaterthancritical} together with the Theorem \ref{mainthm} demonstrates the failure of lower semicontinuity of the metric dimension function with respect to the quantum Gromov-Hausdorff distance as $\beta\downarrow\alpha$. 
\end{rem}

\begin{rem}\label{qGHcollapse}
Let $\mu = \mathrm {Kol} (X, d).$ Consider a sequence $\left \{\gamma_n \right \}_{n \geq 1}$ with $\gamma_n > C$ for all $n \geq 1$ in such a way that $\gamma_{n + 1} > \frac {2 C \gamma_n (1 + \mu)} {C'} + C$ for all $n \geq 1.$ Then $\frac {C + \mu \gamma_{n + 1}} {\gamma_{n + 1} - C } < \frac {2 C + \mu \left (\gamma_{n + 1} + C \right )} {\gamma_{n + 1} - C} < \mu + \frac {C'} {\gamma_n}$ for all $n \geq 1.$ Thus by virtue of Lemma \ref{improvedlowerbound}, we can get hold of a countable collection of CQMS structure on $A \rtimes_{r, \rho, \sigma} \Gamma$ in terms of the Lip-norms $L_{\ell}^{S, \beta, \gamma_n}$ at the critical parameter $\beta = \alpha$ (for any $\alpha > 0$) such that the associated metric dimensions form a strictly decreasing sequence. Thus, in light of Theorem \ref{qGH vs Mdim}, the quantum Gromov-Hausdorff distance between any pair of distinct CQMSs from the countable collection is strictly positive. This prevents the continuity and convergenvce results established in Theorem \ref{joint continuity qGH} and Theorem \ref{convergence qGH} from reducing to complete triviality. 
\end{rem}

\section{Examples} \label{Examples}
Let $\Gamma_{g}$ be the fundamental group of a closed orientable surface of genus $g\geq 2$. Then recall that $\Gamma_g$ is a finitely generated discrete group with the following standard presentation:
\begin{displaymath}
    \Gamma_g=\langle a_{1},b_{1},a_2,b_2,\ldots,a_g,b_g|[a_{1},b_1][a_2,b_2]\ldots[a_g,b_g]=1\rangle.
\end{displaymath}
The following properties of $\Gamma_g$ are well known:\\
(i) $\Gamma_g$ has exponential growth (\cite{Sambu-tight}).\\
(ii) $\Gamma_g$ is exact (\cite{Anan-Del-Exact}).\\
$\Gamma_g$ has a one-parameter family of continuous unitary $2$-cocycles. in fact, $H^2(\Gamma_g,U(1))\cong\mathbb{R}/\mathbb{Z}.$ This follows from \textbf{Universal Coefficient Theorem} \cite{Hatcher-Alg-Top}*{Theorem 3.2} as, by \cite{Brown-Cohom}*{Example 3, Section 1, Chapter III}, $$H^{\ast} \left (\Gamma_g, U (1) \right ) \cong H^{\ast} \left (K \left (\Gamma_g, 1 \right ), U (1) \right ),$$ and $K \left (\Gamma_g, 1 \right ) \cong \Sigma_g.$ The isomorphism id essentially obtained by exponentiating a generator of $H^{2}(\Gamma_g,\mathbb{Z})\cong\mathbb{Z}.$

\subsection{Action on the profinite completion} 
It is well known that $\Gamma_g$ is residually finite (see page no. 105 of \cite{Wikes-Profinite} for example). Let us consider the profinite completion $\widehat{\Gamma_g}$ of $\Gamma_g$. We recall the definition of profinite completion. For details, the reader is referred to \cite{Wikes-Profinite}*{Definition 1.2.5}. Consider the inverse system of finite index normal subgroups of $\Gamma_g$ under the set theoretic inclusion. For any $N_1\subseteq N_2$, consider the natural map $\phi_{N_1 N_2}:\Gamma_g/N_1\rightarrow \Gamma_g/N_2$. Then the profinite completion is the inverse limit of this system. We have the canonical projection maps $p_{N}:\widehat{\Gamma_g}\rightarrow \Gamma_g/N$ for all finite index normal subgroups $N\subset\Gamma_g$. As $\Gamma_g$ is finitely generated, one can obtain the profinite completion $\widehat{\Gamma_g}$ as the inverse limit of countably many finite subgroups $\Gamma/N$. In particular, one can choose a particular inverse system given by $\Gamma_g \supseteq N_{1}\supseteq N_{2} \supseteq \ldots$ to obtain the inverse limit. This is done by using the fact that a finitely generated discrete group has countably many normal subgroups of finite index. Therefore, without loss of generality, we can take the transition maps $\phi_{ij}:\Gamma/N_{i}\rightarrow\Gamma/N_{j}$. Then any element of $\widehat{\Gamma_g}$ can be written as $(sN_1,sN_2,sN_3,\ldots)$ for $s\in\Gamma_g$. It is well known that $\widehat{\Gamma_g}$ is a compact, Hausdorff totally disconnected topological space. Therefore, $C(\widehat{\Gamma_g})$ is an AF-algebra. More precisely $C(\widehat{\Gamma_g})=\lim\limits_{j}(C(\Gamma/N_j),p_{N_j}^{\ast})$ where $p_{N_j}^{\ast}:C(\Gamma/N_j)\rightarrow C(\widehat{\Gamma_g})$ is the natural injective $C^{\ast}$-homomorphism identifying $C(\Gamma/N_j)$ as a $C^{\ast}$-subalgebra of $C(\widehat{\Gamma_g})$ for all $j$. The group $\Gamma_g$ naturally acts on $\widehat{\Gamma_g}$ by left translation: $t.((sN_1,sN_2,sN_3,\ldots)):=(tsN_1,tsN_2,tsN_3,\ldots)$. Consequently $\Gamma_g$ acts on the AF-algbera $C(\widehat{\Gamma_{g}})$. The action is given by $\rho_t(f)(x):=f(t^{-1}.x)$ for $t\in \Gamma_g$, $f\in C(\widehat{\Gamma_g})$ and $x\in\widehat{\Gamma_g}$. It is straightforward to verify that the canonical projection group homomorphisms $p_{N_j}:\widehat{\Gamma_g}\rightarrow \Gamma_g/N_j$ are equivariant with respect to the left translation actions. Using this, it is easy to see that the left translation action preserves the finite dimensional subalgebras $A_{j}:=p_{N_j}^{\ast}(C(\Gamma/N_j))$. 
\begin{lem}
    Let $\Gamma_g$ be a surface group as above; $\mu$ be the unique normalized Haar measure on the compact group $\widehat{\Gamma_g}$. There are natural conditional expectations $E_{j}:C(\widehat{\Gamma_g})\rightarrow A_{j}$ for all $j$ such that $E_{j}$ preserves the faithful tracial state $\tau$ corresponding to the Haar measure $\mu$. Moreover, the left translation action of $\Gamma_g$ on $C(\widehat{\Gamma_g})$ commutes with $E_j$ for all $j$.
\end{lem}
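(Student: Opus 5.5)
The plan is to realize $E_j$ as averaging over the cosets of the open compact subgroup $K_j:=\ker(p_{N_j})=\overline{N_j}\subseteq\widehat{\Gamma_g}$. This subgroup has finite index $[\Gamma_g:N_j]$ and is normal, since $p_{N_j}$ is a continuous surjective group homomorphism onto the finite group $\Gamma_g/N_j$. Let $\mu_{K_j}$ be the normalized Haar measure on $K_j$; it is simply $\mu|_{K_j}$ rescaled by $[\Gamma_g:N_j]$. We then define
\begin{displaymath}
E_j(f)(x):=\int_{K_j} f(xk)\,d\mu_{K_j}(k),\qquad f\in C(\widehat{\Gamma_g}),\ x\in\widehat{\Gamma_g}.
\end{displaymath}

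\textbf{Step 1: $E_j$ is a conditional expectation onto $A_j$.} Continuity of $E_j(f)$ follows from uniform continuity of $f$ on the compact group. Next, $E_j(f)$ is constant on each coset $xK_j$ by translation invariance of $\mu_{K_j}$, so $E_j(f)$ factors through $p_{N_j}$ and therefore lies in $A_j=p_{N_j}^{\ast}(C(\Gamma_g/N_j))$. If $f\in A_j$, then $f(xk)=f(x)$ for $k\in K_j$, so $E_j(f)=f$; hence $E_j$ is a projection onto $A_j$. For $g\in A_j$ and $f$ arbitrary, pulling $g(xk)=g(x)$ out of the integral gives $E_j(gf)=gE_j(f)$, which is the bimodule property. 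Positivity, unitality and contractivity are clear from the integral formula. By Tomiyama's theorem, a norm-one projection is a conditional expectation in any case.

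\textbf{Step 2: trace preservation.} Write $\tau(f)=\int f\,d\mu$. By Fubini,
\begin{displaymath}
\tau(E_j f)=\int_{K_j}\int_{\widehat{\Gamma_g}} f(xk)\,d\mu(x)\,d\mu_{K_j}(k)=\int_{K_j}\tau(f)\,d\mu_{K_j}(k)=\tau(f),
\end{displaymath}
using right invariance of $\mu$; the compact group is unimodular, so the Haar measure is bi-invariant. Faithfulness of $\tau$ holds because Haar measure gives positive mass to nonempty open sets.

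\textbf{Step 3: equivariance.} With $\rho_t(f)(x)=f(t^{-1}x)$, where $t$ acts through its image in $\widehat{\Gamma_g}$, both sides can be computed directly:
\begin{displaymath}
E_j(\rho_t f)(x)=\int_{K_j} f(t^{-1}xk)\,d\mu_{K_j}(k)=E_j(f)(t^{-1}x)=\rho_t(E_j f)(x).
\end{displaymath}
Translation is on the left while averaging is on the right, so the two operations commute trivially; normality of $K_j$ is not even needed here.

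\textbf{Main obstacle.} Little is genuinely hard. The one point needing care is identifying $A_j$ with the functions constant on the cosets of $K_j$. This requires $p_{N_j}$ to be the continuous surjective homomorphism with kernel $K_j$, and $K_j$ to be open of finite index, so that the cosets are clopen and every coset-constant function is continuous. Both facts follow from the inverse-limit topology, which is generated by the $p_{N_j}$.
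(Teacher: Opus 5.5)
Your proposal is correct and follows essentially the same route as the paper. Both define $E_j(f)(x)=\int_{K_j} f(xk)\,d\mu_{K_j}(k)$ over the compact kernel $K_j=\mathrm{Ker}(p_{N_j})$, prove trace preservation by interchanging the integrals and using right invariance of $\mu$, and get equivariance because left translation commutes with right averaging. You additionally supply details the paper calls routine: the identification of $A_j$ with right $K_j$-invariant functions, the bimodule property, and faithfulness of $\tau$.
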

\begin{proof}
 Note that $\Gamma_g/N_{j}$ is isomorphic to $\widehat{\Gamma_g}/K_{j}$ as topological groups where $K_{j}:=\mathrm{Ker}(p_{N_j})$ is a compact subgroup of $\widehat{\Gamma_g}$. Then any $f\in A_j$ is canonically identified with $f\in C(\widehat{\Gamma_g})$ which are right $K_{j}$ invariant i.e. $A_{j}=\{f\in C(\widehat{\Gamma_g}):f(x)=f(xk) \ for \ k\in K_j\}$. Fix the unique normalized Haar measure $\mu_j$ on $K_{j}$ and define $E_j:C(\widehat{\Gamma_g})\rightarrow A_{j}$ by
 \begin{displaymath}
     E_j(f)(x):=\int\limits_{K_j}f(xk)d\mu_j(k).
 \end{displaymath}
 Then it is routine to verify that $E_j$'s are conditional expectations onto $A_j$ for all $j$. Also 
 \Bea
 \tau(E_j(f))&=&\int_{\widehat{\Gamma_g}}E_{j}(f)(x)d\mu(x)\\
 &=& \int_{\widehat{\Gamma_g}}\Big(\int_{K_j}f(xk)d\mu_{j}(k)\Big)d\mu\
\Eea
Now interchanging the integral and using the right invariance of the Haar measure $\mu$ on $\widehat{\Gamma_g}$, it is easy to see that $\tau(E_j(f))=\tau(f)$ for all $f\in C(\widehat{\Gamma_g})$. It is also easy to verify that $\rho_t$ commutes with $E_{j}$ for all $j$.
 
\end{proof}
Thanks to the above lemma, by \cite{Agui-Latre-AF}*{Theorem 3.5}, we have a CQMS structure on $(C(\widehat{\Gamma_g}),L)$ where $L$ is the Lip-norm given by the following formula for a chosen sequence of positive real numbers $\{\beta_j\}_{j\in\mathbb{N}}$ decreasing to $0$:
\begin{equation}\label{lip1}
   L(f):=\sup\limits_j\frac{\left \| f-E_{j}(f) \right \|}{\beta_j},  
\end{equation}
where $\|\cdot\|$ is the sup-norm on $C(\widehat{\Gamma_g})$. This Lip-norm satisfies the conditions of Definition \ref{fdim approx prop}. As the left translation action of $\Gamma_g$ on $C(\widehat{\Gamma_g})$ commutes with all $E_{j}$'s, a straightforward argument gives us the following lemma:
\begin{lem}
For any $t\in\Gamma_g$, $\rho_t$ is isometric with respect to the Lip-norm $L$ on $C(\widehat{\Gamma_g})$ i.e. for any $f\in C(\widehat{\Gamma_g})$,
    \begin{displaymath}
        L(\rho_{t}(f))=L(f), \ \forall \ f\in C(\widehat{\Gamma_g}),
    \end{displaymath}
    where $\rho_{t}(f)(x)=f(t^{-1}.x)$ as before.
\end{lem}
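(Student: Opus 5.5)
The plan is to show that $\rho_t$ commutes with each $E_j$ and is a sup-norm isometry on $C(\widehat{\Gamma_g})$, and then read off $L(\rho_t(f))=L(f)$ directly from formula \eqref{lip1}. The key point is that left translation commutes with right averaging over $K_j$.

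First I verify the commutation $E_j\circ\rho_t=\rho_t\circ E_j$; the previous lemma asserts this, but I record the computation since everything rests on it. For $f\in C(\widehat{\Gamma_g})$ and $x\in\widehat{\Gamma_g}$,
\begin{displaymath}
E_j(\rho_t f)(x)=\int_{K_j}(\rho_t f)(xk)\,d\mu_j(k)=\int_{K_j}f(t^{-1}xk)\,d\mu_j(k)=E_j(f)(t^{-1}x)=\rho_t(E_j f)(x).
\end{displaymath}
Here the left translation by $t$ is multiplication by the image of $t$ in $\widehat{\Gamma_g}$ under the canonical embedding, so $t^{-1}.(xk)=(t^{-1}x)k$ by associativity in the group $\widehat{\Gamma_g}$.

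Second, $\rho_t$ is an isometry for the sup-norm. It is a $\ast$-automorphism of $C(\widehat{\Gamma_g})$, with inverse $\rho_{t^{-1}}$, induced by a homeomorphism of $\widehat{\Gamma_g}$. Hence, for every $j$,
\begin{displaymath}
\|\rho_t f-E_j(\rho_t f)\|=\|\rho_t(f-E_j f)\|=\|f-E_j f\|.
\end{displaymath}
Dividing by $\beta_j$ and taking the supremum over $j$ gives $L(\rho_t f)=L(f)$. This includes the case where $L(f)=+\infty$: both sides are then infinite, so the domain of $L$ is preserved.

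There is no real obstacle in this argument. The only point needing care is to confirm that the action is by left multiplication while $E_j$ averages on the right, so the two operations commute. The whole argument uses only that $K_j$ acts on the right; in particular it does not need $K_j$ to be normal.
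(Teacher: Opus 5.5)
Your proof is correct and takes the same route as the paper. The paper only notes that $\rho_t$ commutes with every $E_j$ and leaves the rest as a ``straightforward argument''; your proof carries that argument out by checking the commutation, using that $\rho_t$ is a sup-norm isometry, and taking the supremum over $j$ in \eqref{lip1}.
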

Therefore, combining all these we have the following:
\begin{prop}
    The $C^{\ast}$-dynamical system $\Big(C(\widehat{\Gamma_g}),\rho,\Gamma_g)$ satisfies all the conditions of Theorem \ref{mainthm} where $C(\widehat{\Gamma_g})$ carries the CQMS structure from the Lip-norm given by Formula \ref{lip1}. 
\end{prop}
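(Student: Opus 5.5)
The plan is to verify the four hypotheses (i)–(iv) of Theorem \ref{mainthm} one at a time for the system $\big(C(\widehat{\Gamma_g}),\rho,\Gamma_g\big)$, with $L$ given by Formula \ref{lip1}.

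\emph{Hypothesis (i).} By \cite{Agui-Latre-AF}*{Theorem 3.5}, applied with the $\tau$-preserving conditional expectations $E_j$ from the preceding lemma, $L$ is a Lip-norm on $C(\widehat{\Gamma_g})$. For the finite dimensional approximation property of Definition \ref{fdim approx prop} I take $A_n:=A_n$, which is finite dimensional, self-adjoint and unital. I take the approximating maps to be $E_n$, which are contractive and hence continuous. The norm estimate is immediate from the definition of $L$:
\[
\|f-E_n(f)\|\le \beta_n L(f).
\]
For $L(E_n(f))\le L(f)$ I use the nested structure $A_j\subseteq A_{j+1}$, which comes from $N_{j+1}\subseteq N_j$, i.e. $K_{j+1}\subseteq K_j$. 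This gives $E_jE_n=E_{\min(j,n)}$. For $j\ge n$ we get $E_n(f)-E_jE_n(f)=0$. For $j<n$ we get
\[
E_n(f)-E_j(E_n(f))=E_n\big(f-E_j(f)\big),
\]
whose norm is at most $\|f-E_j(f)\|$ because $E_n$ is contractive. Taking the supremum over $j$ gives $L(E_n(f))\le L(f)$.

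The one delicate point is the identity $E_jE_n=E_{\min(j,n)}$. It follows from iterated Haar averaging over the nested compact subgroups $K_n\subseteq K_j$: for $j<n$, $\mu_j$ is invariant under right translation by $K_n$, so averaging first over $K_n$ and then over $K_j$ is the same as averaging over $K_j$ alone.

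\emph{Hypothesis (ii).} Lip-isometry of $\rho$ is exactly the preceding lemma. Since $\rho_t$ is a $\ast$-automorphism it is isometric in sup-norm, and it commutes with each $E_j$, so
\[
\|\rho_t f-E_j\rho_t f\|=\|\rho_t(f-E_jf)\|=\|f-E_jf\|.
\]

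\emph{Hypothesis (iii).} $\Gamma_g$ is exact by \cite{Anan-Del-Exact}. For the cocycle family I take a representative $\omega$ of a generator of $H^2(\Gamma_g,\mathbb Z)\subseteq H^2(\Gamma_g,\mathbb R)$, chosen as a normalized real-valued (integer-valued) 2-cocycle, and set
\[
\sigma_\theta(s,t)=e^{2\pi i\theta\,\omega(s,t)},\qquad \theta\in\Omega=[0,1].
\]
Each $\sigma_\theta$ is a normalized unitary 2-cocycle, since the cocycle identity for $\omega$ exponentiates. The family is strongly continuous because, for fixed $(s,t)$, the map $\theta\mapsto\sigma_\theta(s,t)$ is continuous.

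\emph{Hypothesis (iv).} Let $\ell$ be the word length with respect to the standard generators. Then $\lambda(n)\le(4g)^n=e^{n\log 4g}$, so (iv) holds with $\alpha=1$ and $C=\log 4g$. The exponential growth of \cite{Sambu-tight} also gives the lower bound $e^{C'n}$, although that is not needed here.

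I expect no real obstacle. The only step that needs care is the Lip-contractivity of $E_n$ in (i), handled by the nesting argument above.
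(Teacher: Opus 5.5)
Your route is the paper's own: the paper just assembles the preceding lemmas with the cited facts (Aguilar--Latr\'emoli\`ere for the Lip-norm, exactness of $\Gamma_g$, $H^2(\Gamma_g,U(1))\cong\mathbb{R}/\mathbb{Z}$, and the growth of $\Gamma_g$). However, one concrete claim in your check of (iv) is false: $\lambda(n)\le(4g)^n$ fails already at $n=1$.

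The ball of radius one is $\{e\}\cup\{a_i^{\pm1},b_i^{\pm1}\}$. These $4g+1$ elements are pairwise distinct, since their images in the abelianization $\mathbb{Z}^{2g}$ are $0,\pm e_1,\dots,\pm e_{2g}$. Hence $\lambda(1)=4g+1>4g=e^{C}$ for your choice $C=\log 4g$. This matters: the standing assumption must hold for every $n\ge 1$, and the paper uses it at small radii, e.g.\ $\lambda(1)\le e^{C}$ in the proof of Lemma~\ref{estimate1}.

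The fix is immediate. Every element of length at most $n$ is a product of exactly $n$ elements of $\{e\}\cup\{a_i^{\pm1},b_i^{\pm1}\}$. So $\lambda(n)\le(4g+1)^n$, and (iv) holds with $\alpha=1$ and $C=\log(4g+1)$.

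Apart from this, your argument is correct, and in one respect it is more complete than the paper. The paper only asserts that the Lip-norm of Formula~\eqref{lip1} has the finite dimensional approximation property. Your identity $E_jE_n=E_{\min(j,n)}$ actually proves $L(E_nf)\le L(f)$; it follows from $K_n\subseteq K_j$, right invariance of $\mu_j$ under $K_n$, and $A_j\subseteq A_n$.

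Two small points are worth making explicit:
\begin{itemize}
\item The non-strict estimate $\|f-E_nf\|\le\beta_nL(f)$ is the only satisfiable reading of Definition~\ref{fdim approx prop}, because the strict inequality fails for $f=1$.
\item The Aguilar--Latr\'emoli\`ere theorem requires the filtration to start with $A_0=\mathbb{C}1$, i.e.\ the chain should begin with $N_0=\Gamma_g$. Otherwise $L$ vanishes on all of $A_1$, and its kernel is strictly larger than $\mathbb{C}1$.
\end{itemize}
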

\subsection{Action on \texorpdfstring{$SU(2)$}{SU(2)}} Consider the compact Lie group $SU(2)$ and a group homomorphism $\zeta:\Gamma_g\rightarrow SU(2)$. Any such choice is going to give a $C^{\ast}$-dynamical system $\big(C(SU(2)),\rho,\Gamma_g\big)$ where $\rho_{t}(f)(x):=f(\zeta(t)^{-1}.x)$ for $t\in \Gamma_g$ and $x\in SU(2)$. Also the left translation action of $SU(2)$ on itself produces an ergodic action $\chi$ on $C(SU(2))$. Then it is easy to see that $\rho_{t}\in\mathrm{Aut}\big(C(SU(2))\big)$ is nothing but $\chi_{\zeta(t)}$ for all $t\in\Gamma_g$. Choosing an adjoint invariant length function $\ell$ on $SU(2)$ (i.e. $\ell(x)=\ell(yxy^{-1})$ for all $x,y\in SU(2)$) produces a Lip-norm $L$ on $C(SU(2))$ given by
\begin{equation}\label{lip2}
    L(f):=\sup_{x\neq e}\frac{\|\chi_{x}(f)-f \|}{\ell(x)}.
\end{equation}
\begin{lem}
    The action $\rho$ is Lip-isometric i.e. for any $t\in\Gamma_g$, $L(\rho_t(f))=L(f)$ for all $f\in C(SU(2))$.
\end{lem}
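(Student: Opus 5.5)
The plan is to compute $L(\rho_t(f))$ directly from Formula \eqref{lip2}, using two facts. First, $\rho_t=\chi_{\zeta(t)}$. Second, $\chi$ is an action of $SU(2)$ by $\ast$-automorphisms, so it is isometric for the sup-norm and is multiplicative. Conjugating by $\zeta(t)$ then reindexes the supremum, and adjoint invariance of $\ell$ makes the reindexing harmless. Since $L$ may take the value $+\infty$, I will argue at the level of the sets of quotients over which the suprema are taken, so that infinite values need no separate treatment.

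Fix $t\in\Gamma_g$, write $y:=\zeta(t)\in SU(2)$, and let $f\in C(SU(2))$. The first step is to check that $\chi_x(f)(z)=f(x^{-1}z)$ satisfies $\chi_x\chi_y=\chi_{xy}$. This gives the commutation relation
\begin{displaymath}
\chi_x\chi_y=\chi_y\chi_{y^{-1}xy}\quad\text{for all } x\in SU(2).
\end{displaymath}
Consequently
\begin{displaymath}
\chi_x(\rho_t f)-\rho_t f=\chi_y\big(\chi_{y^{-1}xy}(f)-f\big).
\end{displaymath}
Each $\chi_y$ is a $\ast$-automorphism of $C(SU(2))$, hence isometric for the sup-norm. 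Therefore
\begin{displaymath}
\|\chi_x(\rho_t f)-\rho_t f\|=\|\chi_{y^{-1}xy}(f)-f\|.
\end{displaymath}

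The next step uses adjoint invariance of $\ell$, which gives $\ell(x)=\ell(y^{-1}xy)$. For each $x\neq e$ this yields
\begin{displaymath}
\frac{\|\chi_x(\rho_tf)-\rho_tf\|}{\ell(x)}=\frac{\|\chi_{x'}(f)-f\|}{\ell(x')},\qquad x':=y^{-1}xy .
\end{displaymath}
The map $x\mapsto y^{-1}xy$ is a bijection of $SU(2)\setminus\{e\}$ onto itself, since it is a bijection of $SU(2)$ fixing $e$. So the two sets of quotients coincide, and taking suprema gives $L(\rho_t f)=L(f)$. This holds whether the common value is finite or $+\infty$, so it holds for all $f$.

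The only point needing care is the bookkeeping in the commutation identity, namely getting $y^{-1}xy$ rather than $yxy^{-1}$. Either form works, because adjoint invariance of $\ell$ covers conjugation by any element, in particular by $y^{-1}$. I do not expect any genuine obstacle.
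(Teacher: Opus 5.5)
Your proposal is correct and follows essentially the same route as the paper. You rewrite $\chi_x\rho_t=\chi_{\zeta(t)}\chi_{\zeta(t)^{-1}x\zeta(t)}$, use that $\chi_{\zeta(t)}$ is a sup-norm isometry, and reindex the supremum via conjugation invariance of $\ell$; your remarks that conjugation is a bijection of $SU(2)\setminus\{e\}$ and that the argument covers $L(f)=+\infty$ just make explicit what the paper leaves implicit.
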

\begin{proof}
For any $t\in\Gamma_g$, 
\Bea
L(\rho_t(f))&=& \sup_{x\neq e}\frac{\|\chi_{x}(\rho_t(f))-\rho_t(f) \|}{\ell(x)}\\
&=& \sup_{x\neq e}\frac{\|\chi_{x}(\chi_{\zeta(t)}(f))-\chi_{\zeta(t)}(f) \|}{\ell(x)}\\
&=&\sup_{x\neq e}\frac{\|\chi_{\zeta(t)}\big(\chi_{\zeta(t)^{-1}x\zeta(t)}(f)-f\big) \|}{\ell(x)}
\Eea
Using the facts that $\chi_{\zeta(t)}$ is norm isometric and $\ell$ is adjoint invariant we get that the last expression is equal to $L(f)$.\end{proof}
Now it is well known that the Lie group $SU(2)$ admits a faithful, finite dimensional unitary representation. Therefore, by Lemma 8.3 and Lemma 8.4 of \cite{Rieffel-Gromov}, we see that there are finite dimensional spaces $B_n\subset C(SU(2))$, a sequence of maps $P_{n}:C(SU(2))\rightarrow B_{n}$ and a sequence of real numbers $\delta_{n}$ going to zero such that $\lvert\lvert f-P_{n}(f)\rvert\rvert\leq\delta_n L(f)$, $\lvert\lvert P_n\rvert\rvert\leq 1$ and $L(P_n(f))\leq L(f)$ for all $f\in SU(2)$.  Combining all these we conclude that
\begin{prop}
    The $C^*$-dynamical system $\big(C(SU(2)),\rho,\Gamma_g\big)$ satisfies all the hypotheses of Theorem \ref{mainthm} where $C(SU(2))$ carries the CQMS structure from the Lip-norm given by Formula \ref{lip2}.
\end{prop}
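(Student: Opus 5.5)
The proposition requires checking the four hypotheses (i)--(iv) of Theorem \ref{mainthm} for the system $\big(C(SU(2)),\rho,\Gamma_g\big)$. I will take them in turn, using the facts collected just before the statement.

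For (i), I first need $(C(SU(2)),L)$ to be a compact quantum metric space. Here $\chi$ is the ergodic left translation action of the compact group $SU(2)$, and $\ell$ is a continuous length function on it. By Rieffel's theorem on ergodic actions of compact groups (\cite{Rieffel-Metric}), Formula \ref{lip2} then defines a Lip-norm. Concretely, $L$ is the classical Lipschitz constant for the metric $d(x,y)=\ell(x^{-1}y)$, so the CQMS is just $(SU(2),d)$. Density of the domain holds because smooth functions have finite $L$. Adjoint invariance holds because $\chi_x$ is a $\ast$-automorphism. The kernel consists of the scalars, by ergodicity.

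For the finite dimensional approximation property of Definition \ref{fdim approx prop}, I will use the spaces $B_n$ and maps $P_n$ from \cite{Rieffel-Gromov}*{Lemmas 8.3, 8.4}, taking $A_n=B_n$, $E_n=P_n$ and $\beta(n)=\delta_n$. The definition also requires each $B_n$ to be self-adjoint and to contain the unit. The $P_n$ are convolutions $f\mapsto\varphi_n * f$ with approximate-identity functions $\varphi_n$ lying in spans of matrix coefficients. The $B_n$ are therefore finite sums of isotypic components, which are self-adjoint and contain the trivial representation. If needed, I will replace $\varphi_n$ by $\tfrac12(\varphi_n+\check{\bar\varphi}_n)$ and normalize it to have integral one. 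Continuity of $P_n$ follows from $\|P_n\|\le 1$. The bound $L(P_nf)\le L(f)$ holds because convolution commutes with the left translations $\chi_x$; I expect this to need $\ell$ to be conjugation invariant, which we have assumed.

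For (ii), Lip-isometry of $\rho$ is exactly the preceding lemma. For (iii), $\Gamma_g$ is exact by \cite{Anan-Del-Exact}. For the cocycles I will take $\sigma_\theta=e^{2\pi i\theta c}$, where $c$ is a normalized integer $2$-cocycle representing a generator of $H^2(\Gamma_g,\mathbb{Z})$ and $\theta\in\Omega=[0,1]$. Each map $\theta\mapsto\sigma_\theta(s,t)$ is continuous, so the family is strongly continuous.

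For (iv), I take the word length for the standard generating set. Its growth satisfies $\lambda(n)\le(4g)^n=e^{n\log 4g}$, so (iv) holds with $\alpha=1$ and $C=\log(4g)$. The main point needing care is the approximation property, specifically that the Rieffel projections can be chosen unital and self-adjoint. The remaining verifications are formal.
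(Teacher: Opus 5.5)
Your plan follows the paper's route. The paper likewise just assembles the preceding Lip-isometry lemma, Rieffel's Lemmas 8.3--8.4 for the maps $P_n$, exactness of $\Gamma_g$, the cocycle family obtained by exponentiating a generator of $H^2(\Gamma_g,\mathbb{Z})\cong\mathbb{Z}$, and an exponential upper bound on growth; you simply supply more detail.

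\textbf{The growth bound is wrong as stated.} The inequality $\lambda(n)\le(4g)^n$ already fails at $n=1$. The word-length ball of radius one consists of $e$ together with the $4g$ distinct elements $a_i^{\pm1},b_i^{\pm1}$, so $\lambda(1)=4g+1$. Every element of the $n$-ball is a product of $n$ letters from $S\cup\{e\}$, so you do get $\lambda(n)\le(4g+1)^n$. Thus (iv) holds with $\alpha=1$ and $C=\log(4g+1)$.

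\textbf{Two smaller points in the approximation step.}

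First, left convolution $f\mapsto\varphi_n*f$ commutes with the left translations $\chi_x$ only when $\varphi_n$ is a class function. Conjugation invariance of $\ell$ is not what gives commutation. However, conjugation invariance of $\ell$ together with $\varphi_n\ge0$ and $\int\varphi_n=1$ gives $\|\chi_x(\varphi_n*f)-\varphi_n*f\|\le\ell(x)L(f)$ directly: substitute $w=z^{-1}y$ and use $\ell(z^{-1}xz)=\ell(x)$. So $L(P_nf)\le L(f)$ holds either way.

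Second, what makes $P_n$ commute with the involution is that $\varphi_n$ is real-valued, not the symmetrization $\tfrac12(\varphi_n+\check{\bar\varphi}_n)$. Take the kernels to be nonnegative class functions, e.g.\ normalized powers of $d+\mathrm{Re}\,\mathrm{tr}\,\pi(x)$ for a faithful $d$-dimensional representation $\pi$. Then $P_n$ is $\ast$-preserving and commutes with every $\rho_t=\chi_{\zeta(t)}$.

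This is more than Definition \ref{fdim approx prop} literally asks. There you need neither unitality nor surjectivity of $E_n$; you could just take $A_n=B_n+B_n^{\ast}+\mathbb{C}1$. But it is what the proof of Theorem \ref{fin-dim threshold} tacitly uses to make $\sum_t\delta_tE_n(a_t)$ self-adjoint.
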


\appendix
\section{Some Useful Estimations}

\begin{lem} \label{threshold}
Let $\alpha, \beta, \gamma,$ and $C$ be real constants satisfying $\beta \ge \alpha > 0$ and $\gamma > C > 0$, and define $\eta = \gamma - C$. For any $\varepsilon > 0$, there exists an integer $N \ge 1$ such that for all integers $n \ge N$,
\begin{equation*}
    \sum_{k=n+1}^{\infty} e^{-\gamma k^\beta + C k^\alpha} \le \varepsilon.
\end{equation*}
Furthermore, an explicit such threshold $N$ is given by
\begin{equation*}
    N = \begin{cases}
    \left\lceil \left(\frac {1} {\eta} \log \left (\frac {1} {\eta \beta \varepsilon} \right) \right )^{\frac{1}{\beta}} \right \rceil, & \mathrm{if }\ \beta \ge 1, \\[12pt]
    \left\lceil \left( \frac{2}{\eta} \log \left (\frac {K} {\varepsilon} \right) \right)^{\frac{1}{\beta}} \right\rceil, & \mathrm{if }\ 0 < \beta < 1,
    \end{cases}
\end{equation*}
where $K = \frac{2}{\eta \beta} \left( \frac{2(1-\beta)}{e \eta \beta} \right)^{\frac{1-\beta}{\beta}}.$
\end{lem}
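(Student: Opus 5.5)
We will bound the tail by comparison with an integral and then solve explicitly for the cutoff. The first step is the common reduction: since $\beta\ge\alpha$ and $k\ge 1$, we have $k^\alpha\le k^\beta$. Hence
\[
-\gamma k^\beta + C k^\alpha \le -(\gamma-C)k^\beta = -\eta k^\beta ,
\]
and it suffices to show $\sum_{k=n+1}^\infty e^{-\eta k^\beta}\le\varepsilon$ for $n\ge N$. Because $x\mapsto e^{-\eta x^\beta}$ is decreasing on $[0,\infty)$, we get
\[
\sum_{k=n+1}^\infty e^{-\eta k^\beta}\le \int_n^\infty e^{-\eta x^\beta}\,dx ,
\]
and this integral is decreasing in $n$. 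So it is enough to verify the bound at $n=N$ with $N$ as given.

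\textbf{Case $\beta\ge 1$.} For $x\ge n\ge 1$ we use $x^{\beta}\ge n^{\beta-1}x$, so that
\[
\int_n^\infty e^{-\eta x^\beta}\,dx \le \int_n^\infty e^{-\eta n^{\beta-1}x}\,dx = \frac{e^{-\eta n^\beta}}{\eta n^{\beta-1}} \le \frac{e^{-\eta n^\beta}}{\eta}.
\]
A sharper route substitutes $u=x^\beta$, giving $\frac1\beta\int_{n^\beta}^\infty u^{1/\beta-1}e^{-\eta u}\,du\le \frac{e^{-\eta n^\beta}}{\eta\beta}$, since $u^{1/\beta-1}\le 1$ for $u\ge1$. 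Requiring $e^{-\eta N^\beta}/(\eta\beta)\le\varepsilon$ gives exactly $N^\beta\ge\frac1\eta\log\frac{1}{\eta\beta\varepsilon}$, which matches the stated threshold. When $\eta\beta\varepsilon\ge1$ the logarithm is nonpositive; there we simply take $N=1$, reading the ceiling as at least $1$.

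\textbf{Case $0<\beta<1$.} We follow the computation already done in the proof of Lemma \ref{estimate1}. Substituting $u=x^\beta$ and setting $p=\frac{1-\beta}{\beta}$,
\[
\int_n^\infty e^{-\eta x^\beta}\,dx = \frac1\beta\int_{n^\beta}^\infty u^{p}e^{-\eta u}\,du .
\]
We split $e^{-\eta u}=e^{-\eta u/2}e^{-\eta u/2}$ and bound $g(u)=u^pe^{-\eta u/2}$ by its maximum $M=g(2p/\eta)=\left(\frac{2p}{e\eta}\right)^p$. This yields
\[
\int_n^\infty e^{-\eta x^\beta}\,dx \le \frac{M}{\beta}\cdot\frac{2}{\eta}e^{-\eta n^\beta/2}.
\]
Since $2p/(e\eta)=\frac{2(1-\beta)}{e\eta\beta}$, the prefactor is exactly $K$. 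Requiring $Ke^{-\eta N^\beta/2}\le\varepsilon$ gives $N^\beta\ge\frac2\eta\log\frac K\varepsilon$, which is the stated formula; again we take $N\ge1$ if the logarithm is negative.

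The argument is routine, so the only obstacle is bookkeeping. First, the integral comparison must start at $n$ rather than $n+1$, which is harmless. Second, one must check that the constant $K$ in the statement agrees with $\frac{2M}{\eta\beta}$. The paper's earlier expression for $M$ in Lemma \ref{estimate1} contains a typo ($1-\gamma'$), so we recompute $M$ directly rather than quoting it.
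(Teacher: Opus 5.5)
Your proposal is correct and follows the paper's proof essentially step for step: the same reduction to $e^{-\eta k^\beta}$, the same integral comparison starting at $n$, and, for $0<\beta<1$, the same substitution $u=x^\beta$ with $g(u)=u^pe^{-\eta u/2}\le M$, giving $K=2M/(\eta\beta)$. For $\beta\ge 1$, your bound $u^{1/\beta-1}\le 1$ after substituting is just a repackaging of the paper's device of inserting the factor $(x/n)^{\beta-1}\ge 1$; your choice of $N=1$ when the logarithm is nonpositive also sensibly covers a case that the stated formula does not literally handle.
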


\begin{proof}
First, we establish an upper bound for tail of the infinite sum using the integral test. For all $k \ge n + 1 \ge 1$, since $\beta \ge \alpha > 0,$ we have $k^{\beta} \geq k^{\alpha}.$ Since $C > 0,$ multiplying by $C$ yields $C k^{\alpha} \geq C k^{\beta}.$ Substituting this into the exponent of the summand, we obtain
\begin{equation*}
    -\gamma k^\beta + C k^\alpha \le -\gamma k^\beta + C k^\beta = -(\gamma - C) k^\beta.
\end{equation*}
Let $\eta = \gamma - C.$ Since $\gamma > C,$ we have $\eta > 0.$ This shows that the $k$-th term in the summation is bounded above by $e^{-\eta k^\beta}.$ The function $f(x) = e^{-\eta x^\beta}$ is positive and strictly decreasing on the interval $[1, \infty).$ Therefore, applying integral test we have
\begin{equation}
    S(n) \le \sum_{k=n+1}^{\infty} e^{-\eta k^\beta} \le \int_{n}^{\infty} e^{-\eta x^\beta} \, dx.
\end{equation}
In order to bound the integral we will consider the following two cases $:$

\vspace{2mm}

\textbf{Case 1:} Assume $\beta \geq 1.$ Then for all $x \ge n \ge 1$ it follows that $(x/n)^{\beta - 1} \ge 1.$ Thus we have
\Bea
    S(n) & \leq & \int_{n}^{\infty} \left( \frac{x}{n} \right)^{\beta - 1} e^{-\eta x^\beta} \, dx \\
    & = & \frac{1}{n^{\beta - 1}} \left[ -\frac{1}{\eta \beta} e^{-\eta x^\beta} \right]_n^\infty \\
    & = & \frac{e^{-\eta n^\beta}}{\eta \beta n^{\beta - 1}}.
\Eea
Since $n \geq 1$ and $\beta \ge 1$, we have $n^{\beta - 1} \ge 1,$ which allows us to simplify the bound to $S(n) \leq \frac{e^{-\eta n^\beta}}{\eta \beta}.$ Thus in order to ensure $S(n) \leq \varepsilon,$ it is sufficient to require
\begin{equation*}
    \frac{e^{-\eta n^\beta}}{\eta \beta} \le \varepsilon \implies e^{-\eta n^\beta} \le \eta \beta \varepsilon.
\end{equation*}
Taking logarithm in both sides and isolating $n$ yields the explicit threshold for $n.$

\textbf{Case 2:} Assume $0 < \beta < 1$. We apply the substitution $u = x^\beta$, which gives $dx = \frac{1}{\beta} u^{\frac{1-\beta}{\beta}} \, du.$ Then the integral bound becomes
\Bea
    S(n) & \leq & \frac{1}{\beta} \int_{n^\beta}^{\infty} u^{\frac{1-\beta}{\beta}} e^{-\eta u} \, du \\ & = & \frac{1}{\beta} \int_{n^\beta}^{\infty} \left( u^{\frac{1-\beta}{\beta}} e^{-\frac{\eta}{2} u} \right) e^{-\frac{\eta}{2} u} \, du.
\Eea
Let $p = \frac{1-\beta}{\beta} > 0$. We would like to have the global maximum of the function $g(u) = u^p e^{-\frac{\eta}{2} u}$ on $[0, \infty).$ It is easy to see that $g$ is differentiable and the equation $g' (u) = 0$ yields a unique critical point at $u = \frac{2p}{\eta}.$ Since $g$ is increasing on $\left [0, \frac {2 p} {\eta} \right ]$ and decraesing on $\left [\frac {2 p} {\eta}, \infty \right ),$ it follows that $g$ has the global maximum at $u = \frac {2 p} {\eta}.$  Evaluating $g$ at this maximum gives
\begin{equation*}
    M = g\left(\frac{2p}{\eta}\right) = \left( \frac{2(1-\beta)}{e \eta \beta} \right)^{\frac{1-\beta}{\beta}}.
\end{equation*}
Since $g(u) \le M$ for all $u \ge 0,$ it follows that
\Bea
    S(n) & \leq & \frac{M}{\beta} \int_{n^\beta}^{\infty} e^{-\frac{\eta}{2} u} \, du \\
    & = & \frac{M}{\beta} \left[ -\frac{2}{\eta} e^{-\frac{\eta}{2} u} \right]_{n^\beta}^\infty \\
    & = & \frac{2M}{\eta \beta} e^{-\frac{\eta} {2} n^{\beta}} \\ & = & K e^{-\frac {\eta} {2} n^{\beta}},
\Eea
where $K = \frac{2M}{\eta \beta}.$ Thus in order to guarantee $S(n) \leq \varepsilon,$ we require
\begin{equation*}
    K e^{-\frac{\eta}{2} n^\beta} \le \varepsilon \implies e^{-\frac{\eta}{2} n^{\beta}} \le \frac{\varepsilon}{K}.
\end{equation*}
Taking logarithm in both sides we have the explicit threshold for $n.$ This completes the proof.
\end{proof}
\begin{rem} \label{gamma independence}
Note that the threshold is a decreasing function in both $\beta$ and $\gamma$. Therefore, given a fixed $\epsilon>0$, if a threshold $N$ works for some $\beta=\beta_0$ and $\gamma=\gamma_0$, then the same threshold would work for all $\beta\geq\beta_0$ and all $\gamma\geq \gamma_0$. 
\end{rem}
\begin{lem} \label{finiteness}
Let $\alpha, \beta, C,$ and $\gamma_0$ be real numbers such that $\alpha > 0,$ $\beta \geq \alpha,$ and $\gamma_0 > C > 0.$ Let $S(\gamma)$ be the series defined by
$$S(\gamma) = \sum\limits_{n = 1}^{\infty} e^{-\gamma (n - 1)^{\beta} + C n^{\alpha}}$$
Then $S(\gamma)$ is uniformly bounded on the interval $[\gamma_0, \infty).$ That is, there exists a constant $M < \infty$ (depending only on $\alpha, \beta, C,$ and $\gamma_0$) such that $S(\gamma) \leq M$ for all $\gamma \geq \gamma_0.$
\end{lem}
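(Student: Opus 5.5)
Since $\gamma\geq\gamma_0$, each term of the series satisfies $e^{-\gamma(n-1)^{\beta}+Cn^{\alpha}}\leq e^{-\gamma_0(n-1)^{\beta}+Cn^{\alpha}}$, so $S(\gamma)\leq S(\gamma_0)$ for all $\gamma\geq\gamma_0$. The plan is therefore to show $S(\gamma_0)<\infty$ and take $M:=S(\gamma_0)$. By construction this $M$ depends only on $\alpha,\beta,C,\gamma_0$.

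To prove $S(\gamma_0)<\infty$, I would separate out finitely many initial terms and compare the tail with a convergent series. First use $\beta\geq\alpha$: for $n\geq 2$ we have $n-1\geq 1$, hence $(n-1)^{\beta}\geq (n-1)^{\alpha}$. This gives
\[
-\gamma_0(n-1)^{\beta}+Cn^{\alpha}\leq -\gamma_0(n-1)^{\alpha}+Cn^{\alpha}.
\]
Next compare $(n-1)^{\alpha}$ with $n^{\alpha}$. For $0<\alpha\leq 1$ the map $x\mapsto x^{\alpha}$ is subadditive, so $n^{\alpha}\leq (n-1)^{\alpha}+1$. Substituting,
\[
-\gamma_0(n-1)^{\alpha}+Cn^{\alpha}\leq -(\gamma_0-C)(n-1)^{\alpha}+C.
\]
Writing $\eta=\gamma_0-C>0$, the tail $\sum_{n\geq 2}$ is therefore bounded by $e^{C}\sum_{m\geq 1}e^{-\eta m^{\alpha}}$.

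It remains to see that this last series converges. It is dominated by $\int_{0}^{\infty}e^{-\eta x^{\alpha}}\,dx=\eta^{-1/\alpha}\,\Gamma(1+1/\alpha)<\infty$. Alternatively, one can reuse the substitution $u=x^{\alpha}$ and the maximum of $u^{p}e^{-\eta u/2}$, exactly as in Lemma \ref{threshold}. Adding back the $n=1$ term, which equals $e^{C}$, gives $S(\gamma_0)<\infty$.

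The main obstacle is the case $\alpha>1$, if the lemma is meant to cover it (its hypotheses only say $\alpha>0$). There subadditivity fails and $n^{\alpha}-(n-1)^{\alpha}$ grows like $\alpha n^{\alpha-1}$. If $\beta>\alpha$, this is still harmless: $\gamma_0(n-1)^{\beta}$ eventually dominates $Cn^{\alpha}$, since $(n-1)^{\beta}/n^{\alpha}\to\infty$, so the terms are eventually at most $e^{-n}$. If $\beta=\alpha>1$, I would instead use $(n-1)^{\alpha}\geq n^{\alpha}(1-1/n)^{\alpha}$ and note that $(1-1/n)^{\alpha}\to 1$. Hence for $n$ large, $\gamma_0(1-1/n)^{\alpha}\geq (\gamma_0+C)/2$, and the exponent is at most $-\tfrac{\gamma_0-C}{2}n^{\alpha}$. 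In every case, finitely many terms plus a convergent tail give a finite bound that depends only on $\alpha,\beta,C,\gamma_0$.
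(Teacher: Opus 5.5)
Your proof is correct. It shares the paper's opening reduction ($S(\gamma)\le S(\gamma_0)$, so $M=S(\gamma_0)$) and its closing comparison with a series $\sum e^{-c\,n^{\alpha}}$, but it bounds the exponent differently.

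The paper writes $K_n=n^{\alpha}\bigl(C-\gamma_0 n^{\beta-\alpha}(1-1/n)^{\beta}\bigr)$ and argues purely asymptotically, with separate cases $\beta=\alpha$ and $\beta>\alpha$. This gives $K_n\le-\delta n^{\alpha}$ only for $n\ge N$, with $N$ left unspecified. The argument works verbatim for every $\alpha>0$.

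You first eliminate $\beta$ via $(n-1)^{\beta}\ge(n-1)^{\alpha}$ for $n\ge 2$, which makes the paper's separate $\beta>\alpha$ case superfluous. Then, for $\alpha\le 1$, you use subadditivity $n^{\alpha}\le(n-1)^{\alpha}+1$ to get an inequality valid for all $n\ge 2$. This yields the explicit bound $M\le e^{C}\bigl(1+\int_0^{\infty}e^{-(\gamma_0-C)x^{\alpha}}\,dx\bigr)$, which the paper's argument does not give.

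The cost is the restriction $\alpha\le 1$, which is harmless since it is the paper's standing assumption. For $\alpha>1$ you fall back on exactly the paper's $(1-1/n)^{\alpha}\to 1$ argument. That argument works for all $\alpha>0$, so combined with your reduction to $\beta=\alpha$ it would cover every case at once. Your separate $\beta>\alpha>1$ subcase is therefore not needed.
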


\begin{proof} Let $\gamma \in [\gamma_0, \infty).$ For all integers $n \geq 1,$ we have $(n-1)^\beta \geq 0.$ Consequently,
$$-\gamma (n-1)^\beta \leq -\gamma_0 (n-1)^\beta$$
Since the exponential function is strictly monotonically increasing on $\mathbb{R},$ it follows that
$$e^{-\gamma (n - 1)^{\beta} + C n^{\alpha}} \leq e^{-\gamma_0 (n - 1)^{\beta} + C n^{\alpha}}$$
Summing over all $n \in \mathbb{N},$ we obtain
$$S(\gamma) \leq S(\gamma_0) = \sum_{n = 1}^{\infty} e^{-\gamma_0 (n - 1)^{\beta} + C n^{\alpha}}$$
In order to prove that $S(\gamma)$ is uniformly bounded on $[\gamma_0, \infty),$ it is therefore enough to show that $S(\gamma_0)$ converges to a finite value $M.$

Let $K_n = C n^{\alpha} - \gamma_0 (n - 1)^{\beta} = n^\alpha \left( C - \gamma_0 n^{\beta - \alpha} \left(1 - \frac{1}{n}\right)^\beta \right).$ Note that, if $\beta = \alpha,$ then $K_n = n^\alpha \left( C - \gamma_0 \left(1 - \frac{1}{n}\right)^\alpha \right).$
Since $\lim\limits_{n \to \infty} \left(1 - \frac{1}{n}\right)^\alpha = 1,$ the term inside the parenthesis converges to $C - \gamma_0 < 0.$
Let $\delta_1 = \frac{\gamma_0 - C}{2} > 0.$ Then there exists an integer $N_1$ such that for all $n \geq N_1,$
$$C - \gamma_0 \left(1 - \frac{1}{n}\right)^\alpha \leq -\delta_1$$
Therefore, for all $n \geq N_1,$ we have $K_n \leq -\delta_1 n^\alpha.$
For $\beta > \alpha,$ we have $\lim\limits_{n \to \infty} n^{\beta - \alpha} \left(1 - \frac{1}{n}\right)^{\beta} = \infty.$
Thus, the expression $\left( C - \gamma_0 n^{\beta - \alpha} \left(1 - \frac{1}{n}\right)^\beta \right)$ diverges to $-\infty.$
Fix any arbitrary $\delta_2 > 0.$ Then there exists an integer $N_2$ such that for all $n \geq N_2,$
$$C - \gamma_0 n^{\beta - \alpha} \left(1 - \frac{1}{n}\right)^{\beta} \leq -\delta_2$$
Therefore, for all $n \geq N_2,$ we have $K_n \leq -\delta_2 n^{\alpha}.$

\vspace{0.5em}
In either case, there exists a $\delta > 0$ and an integer $N \in \mathbb{N}$ (depending upon $\delta$) such that for all $n \geq N,$ 
$$e^{K_n} \leq e^{-\delta n^\alpha}$$
In order to show that $S \left (\gamma_0 \right )$ is finite, it is enough to show that the tail of the infinite sum representing $S \left (\gamma_0 \right )$ is finite. We now bound the tail of $S \left (\gamma_0 \right ),$ starting from index $N.$
$$\sum_{n=N}^{\infty} e^{K_n} \leq \sum_{n=N}^{\infty} e^{-\delta n^\alpha}$$
But the infinite sum on the right hand side can be shown to converge excatly in the similar fashion as in the proof of Lemma \ref{threshold}. 
\end{proof}

$\mathbf{Data\ Availability}:$ Not Applicable.

\vspace{2em}

$\mathbf{Conflict\ of\ Interest}:$ The authors declare that there is no conflict of interest.

\bibliographystyle{amsplain}
\bibliography{References}

\end{document}